\newcolumntype{M}[1]{>{\centering\arraybackslash}m{#1}}
\newcolumntype{N}{@{}m{0pt}@{}}
\newtheorem{Th}{Theorem}[section]
\newtheorem{Lem}[Th]{Lemma}
\newtheorem{Prop}[Th]{Proposition}
\theoremstyle{definition}
\newtheorem{remark}[Th]{Remark}
\numberwithin{equation}{section}
\numberwithin{figure}{section}
\DeclareMathOperator\erf{erf}
\newcommand{\fc}{\mathcal{F}}
\newcommand{\pr}{\mathbb{P}}
\newcommand{\ex}{\mathbb{E}}
\newcommand{\eps}{\varepsilon}
\newcommand{\olh}{\mathcal{H}}
\title{\textsc{Grab It Before It's Gone: \\ Testing Uncertain Rewards under a Stochastic Deadline}}
\author{
\textsc{Steven Campbell} 
\thanks{ 
\,\textsc{Columbia University, Department of Statistics, 1255 Amsterdam Ave, New York, NY 10027, USA} (e-mail: {\it sc5314@columbia.edu})}
\and
\textsc{Georgy Gaitsgori} 
\thanks{ 
\,\textsc{Columbia University, Department of Mathematics, 2990 Broadway, New York, NY 10027, USA} (e-mail: {\it gg2793@columbia.edu})}
\and 
\textsc{Richard Groenewald} 
\thanks{ 
\,\textsc{Columbia University, Department of Statistics, 1255 Amsterdam Ave, New York, NY 10027, USA} (e-mail: {\it rag2202@columbia.edu})}
\and
\textsc{Ioannis Karatzas}
\thanks{ 
\,\textsc{Columbia University, Department of Mathematics, 2990 Broadway, New York, NY 10027, USA} (e-mail: {\it ik1@columbia.edu})}
}
\date{\today}
\begin{document}
\maketitle

\begin{abstract}
We study a sequential estimation problem for an unknown reward in the presence of a random horizon. The reward takes one of two predetermined values which can be inferred from the drift of a Wiener process, which serves as a signal. The objective is to use the information in the signal to estimate the reward which is made available until a stochastic deadline that \textit{depends} on its value. The observer must therefore work quickly to determine if the reward is favorable and claim it before the deadline passes. Under general assumptions on the stochastic deadline, we provide a full characterization of the solution that includes an identification with the unique solution to a free-boundary problem. Our analysis derives regularity properties of the solution that imply its ``smooth fit'' with the boundary data, and we show that the free-boundary solves a particular integral equation. The continuity of the free-boundary is also established under additional structural assumptions that lead to its representation in terms of a continuous transformation of a monotone function. We provide illustrations for several examples of interest.
\end{abstract}

\noindent
 {\sl AMS  2020 Subject Classification:} Primary 60G40, 60G35, 35R35. Secondary 62C10, 62L15.

\noindent
 {\sl Keywords:} Optimal stopping, filtering, sequential testing, random horizon, stochastic deadline, parabolic free-boundary problem, smooth fit.
\vspace{4cm}

\section{Introduction}\label{sec_Introduction}

This paper studies the following problem. Suppose there is a reward, whose value is modeled by a random variable $R$ that can only take two known values, one positive and the other negative. However, $R$ cannot be observed directly, but only through a noisy stream of observations
\begin{equation*}
    X(t) = R \, t + W(t), \quad 0 \le t < \infty,
\end{equation*} 
where $W(\cdot)$ is a standard Brownian motion, independent of $R$. At any given moment of time, one can ``grab'' the reward and receive a payoff equal to the value of the random variable $R$; but with the proviso, or ``conditioning,'' that the decision must be made before a stochastic deadline $\gamma$. This deadline is assumed to have a known distribution that can depend on $R$, but is independent of $W(\cdot)$. In other words, one needs to choose a stopping time $\tau$ of the observation process $X(\cdot)$, and thereby obtain the expected payoff 
\begin{equation}\label{expected_payoff_heuristic}
    \ex[R \cdot \mathbf{1}_{\{\tau < \gamma\}}].
\end{equation}
Here, $\gamma$ is a random variable that models the moment of time at which the reward disappears and is no longer available for the observer. What is the optimal strategy $\tau$ to deploy, in order to maximize the expected payoff \eqref{expected_payoff_heuristic}? 

The above problem is a variation of a classical estimation problem for the drift of a Wiener process in the presence of a random horizon (equivalently, stochastic deadline). 
Aside from the random horizon, the primary distinction between classical formulations and ours lies in the reward structure. In traditional sequential testing problems, the objective is typically to minimize the Bayes risk associated with drift estimation, often incorporating a linear time penalty (see, for instance, the seminal paper by Shiryaev \cite{Shiryaev67}, or the literature review below). In contrast, the reward in our problem is defined as the drift value of the observation process itself. Thus, drift estimation serves as a means to evaluate whether the reward is worth stopping for, rather than being the primary goal. Despite these significant differences, the spirit of the problem remains aligned with that of sequential testing problems.

These problems form a research area within the broader field of sequential analysis -- a subfield of probability and statistics that addresses problems where the sample size is not fixed in advance but can vary, based on the observer's objectives.
The origins of sequential analysis are often attributed to Abraham Wald's foundational work \cite{Wald45}, which introduced sequential hypothesis testing and the celebrated Sequential Probability Ratio Test (SPRT) (see also Wald's monograph \cite{Wald47}). 
Since then, sequential analysis has flourished, yielding a myriad of theoretical and practical results. Therefore, we do not make it our goal to provide a comprehensive literature review of the field. Instead, we aim to trace the path from Wald’s original work to the specific problem studied in this paper and, additionally, to highlight recent advances in the theory of sequential testing problems, which has experienced a notable resurgence over the past 10–15 years.

In his seminal paper \cite{Wald45}, Wald introduced the problem of sequential hypothesis testing for i.i.d. observations, i.e., discrete-time observation processes. A few years later, Dvoretzky, Kiefer, and Wolfowitz \cite{DvoKieWol53} extended this framework to sequential decision-making for stochastic processes in continuous time. The sequential testing for the drift of a Wiener process soon followed, with contributions from several authors. Bather \cite{Bather62} solved the sequential testing problem for the sign of the drift under a Bayes risk payoff structure, focusing on the special case of a normal prior distribution. Chernoff \cite{Chernoff61} addressed a related problem with similar assumptions but with a payoff that depends on the magnitude of the unobservable drift (see also Zhitlukhin and Muravlev \cite{ZhiMur13} for an explicit solution to this problem). Shiryaev \cite{Shiryaev67} in turn advanced the field by solving the sequential testing problem for a two-point prior distribution using a connection with a time-homogeneous free-boundary problem (see also earlier treatment in Mikhalevich \cite{Mikhalevich56}, and Shiryaev's monographs \cite{Shiryaev73} and \cite{Shiryaev78} for more details), solidifying his work as a cornerstone in Bayesian sequential testing.

Following these foundational results, progress in this field slowed for some time. However, starting around 2000, new generalizations of the classical sequential testing problem began to emerge. One major direction involved extending the framework to more general observation processes. Peskir and Shiryaev \cite{PesShi00} derived an explicit solution to the sequential testing problem for the constant intensity rate of an observable Poisson process. Gapeev and Shiryaev \cite{GapShi11} investigated sequential testing of two simple hypotheses about the drift rate of a general observable diffusion process, with Ernst and Peskir \cite{ErnPes24} later addressing and confirming the Gapeev-Shiryaev conjecture on the monotonicity of optimal stopping boundaries. In addition, Johnson and Peskir \cite{JohPes18} extended sequential testing to Bessel processes.

Another major direction explored variations in payoff structures. Dyrssen and Ekström \cite{DyrEks18} examined sequential testing for the drift of Brownian motion with costly observations. Campbell and Zhang \cite{CampZha24} introduced and solved a soft classification version of the classical problem, while Ekström, Karatzas, and Vaicenavicius \cite{EksKarVai22} tackled the case of an $L^2$ penalty with general distribution for the drift. Notably, Ekström and Vaicenavicius \cite{EksVai15} had earlier also addressed the case of a general prior distribution for the unknown drift in the classical sequential testing problem, and managed to characterize the solution in terms of a particular integral equation (see also a preceding work of Zhitlukhin and Shiryaev \cite{ZhiShi11}, who studied a problem where the drift could take one of three values). Several additional contributions that we would like to mention include the following. Ernst, Peskir, and Zhou \cite{ErnPesZho20} analyzed the Wiener sequential testing problem in two and three dimensions. Johnson, Pedersen, Peskir, and Zucca \cite{Johnson22} considered a detection problem involving the presence of random drift in Brownian motion. Most recently, Ernst and Mei \cite{ErnstMei23} explored a minimax version of the Wiener sequential testing problem, and Campbell and Zhang \cite{CampZha24MF} studied a so-called mean-field game of sequential testing.

Despite variations in setups and structures, most of the works mentioned above share a key assumption: they consider problems posed on an infinite-time horizon. While this framework is mathematically convenient, it is often restrictive and less applicable to real-world scenarios. To address this, several authors have explored versions of sequential testing problems under finite or random horizons. Gapeev and Peskir \cite{GapPes04} tackled a finite time horizon version of the Wiener sequential testing problem \cite{Shiryaev67}. In the context of random horizons, much of the existing work has been conducted in discrete-time settings. For example, Fraizer and Yu \cite{FraYu07} examined discrete-time sequential testing of binary hypotheses under a stochastic deadline. Dayanik and Yu \cite{DayanikYu13} focused on maximizing sequential reward rates. Novikov and Palacios-Soto \cite{NovPal20} studied a variant of the modified Kiefer-Weiss problem, aiming to minimize the average sample size while imposing constraints on error probabilities. Zhang, Moustakides, and Poor \cite{ZhaMouPoor16} investigated a discrete-time problem similar to ours, where stopping is permitted only when a particular hypothesis should be accepted.

The more immediate motivation for this work, however, is a recent paper by Ekström and Wang \cite{EksWang24}, who address general optimal stopping problems involving uncertainty and random horizons and present several relevant examples. Of particular interest is their analysis of a classical sequential testing problem for a Wiener process under a random horizon, which follows an exponential distribution in the zero-drift case (see Section 7 of \cite{EksWang24}). 
Additionally, they examine a so-called ``hiring problem’’ under a random horizon (Section 5), which is essentially equivalent to the problem \eqref{expected_payoff_heuristic} under consideration. The main limitation of their work on the hiring problem is the assumption that the random horizon follows an exponential distribution. In particular, the finite-horizon setting and more general random horizons are not considered. The main contribution of our paper is the provision of a full solution in the much broader class of stochastic deadline distributions introduced in Section \ref{sec_Model} for both finite and infinite time horizons.

We now turn to an outline of our analysis of the problem \eqref{expected_payoff_heuristic}. We start by reformulating the question as an optimal stopping problem for a conditional probability process with general discount functions, then embed this problem into a broader Markovian framework. Next, we examine the value function of this general problem and demonstrate that, under suitable regularity conditions on the discount functions (or equivalently, on the conditional cumulative distribution functions of the stochastic deadline), the value function and optimal stopping boundary form the \textit{unique} solution to a specific free-boundary problem. 
In addition, we show that the optimal stopping time is the first hitting time of the associated boundary.
Subsequently, we focus on the stopping boundary and investigate its properties: we identify sufficient conditions for its continuity and monotonicity under an appropriate transformation, and show that the stopping boundary solves a specific integral equation. Finally, we conclude with illustrative examples and numerical results.

The outlined program and results will be familiar to the reader acquainted with optimal stopping problems. However, the primary challenge in our case stems from the general structure of the discount functions, which necessitates a very delicate analysis. In particular, unlike typical optimal stopping scenarios, our general assumptions do not ensure that the stopping boundary is monotone or even continuous. Consequently, additional effort is required to verify rigorously several key statements and results. To address these challenges, we leverage heavily the connection between our problem and American options and rely on partial differential equations theory. In the process, we develop novel probabilistic arguments that extend the regularity properties of solutions of finite-horizon free-boundary problems to their infinite-horizon counterparts. Moreover, it is worth emphasizing that the arguments we employ allow us to establish the \textit{uniqueness} of the solution to these problems. We believe that these contributions are of independent interest.

The structure of the paper is as follows. 
We formalize our setup in Section \ref{sec_Model}. Section \ref{sec_equivalent_formulations} is devoted to considering several equivalent formulations, which will be convenient for the proofs of our main results. In Section \ref{sec_main_results} we formulate our main results and investigate the value function of the problem. Section \ref{sec_boundary} studies the optimal boundary, discusses several examples, and presents a numerical analysis. Section \ref{sec_conclusion} concludes by discussing potential extensions. We highlight here that, due to their length and technical complexity, most proofs have been placed in the appendices, with only the proofs of the main results included in the main body of the paper.

\section{Model}\label{sec_Model}
We fix a probability space $(\Omega, \fc, \pr)$, supporting a Bernoulli random variable $\theta$ with $\pr(\theta=1) = p = 1 - \pr(\theta=0)$ for some $ 0 < p < 1$, and an independent standard Brownian motion $W = \{W(t), \, 0 \le t < \infty\}$. We consider an arithmetic Brownian motion $X = \{X(t), \, 0 \le t < \infty\}$, with dynamics given by
\begin{equation}\label{observation_process_model_sec}
    X(t) = (a\theta + b)\,t + W(t), \quad 0 \le t < \infty.
\end{equation}
Here, $a$ and $b$ are real constants with $a + b \ge 0 \ge b$ (such a choice of the parameters is made without loss of generality and is explained in Remark \ref{remark_parameters_explanation}).
We let $\mathbb{F} \coloneqq \{\mathcal{F}(t)\}_{t \ge 0}$ be the augmented filtration generated by the process $X(\cdot)$, and $\mathbb{F}^{\theta, W} \coloneqq \{\mathcal{F}^{\theta, W} (t)\}_{t \ge 0}$ be the augmented ``initial enlargement'' of the filtration generated by the Brownian Motion $W(\cdot)$, via the random variable $\theta$.
In other words, for all $t \ge 0$, we set $\mathcal{F}(t) \coloneqq \overline{\sigma}(X(s), 0 \leq s \le t)$ and $\mathcal{F}^{\theta, W}(t) \coloneqq \overline{\sigma}(\theta, W(s), 0 \leq s \le t)$, where $\overline{\sigma}$ denotes the usual augmentation by the null sets of the underlying $\sigma$-algebra; clearly $\mathcal{F}(t) \subseteq \mathcal{F}^{\theta, W}(t)$. We also denote the collection of admissible stopping times by
\begin{align}\label{collection_stop_times}
    \mathcal{T}_T \coloneqq \Big\{\tau : \tau \text{ is an } \mathbb{F} \text{--stopping time, } \pr(\tau \le T) = 1\Big\},
\end{align}
where $T \in [0, \infty]$ is the time horizon of the problem. 

With the above notation, we consider the following problem. For a fixed time horizon $T$, our goal is to find a pair $(\tau, d)$, consisting of a stopping time $\tau \in \mathcal{T}_T$ and of an $\mathcal{F}(\tau)$--measurable random variable $d:\Omega \to \{0, 1\}$, which maximizes the expected reward
\begin{equation}\label{expected_reward_function_with_decision}
    J_T(\tau, d) \coloneqq \ex\left[
        \big(
            a\theta + b
        \big)
        \cdot 
        \mathbf{1}_{\{d = 1\}} 
        \cdot 
        \mathbf{1}_{\{\tau < \gamma\}} 
    \right].
\end{equation}
Here, $\gamma: \Omega \to [0, T]$ is a stochastic deadline, marking the disappearance of the reward; and $d:\Omega \to \{0, 1\}$ has the significance of a ``decision rule,'' that we make at time $\tau$ concerning the value of the unobserved $\theta$. We assume that the vector $(\theta, \gamma)$ is independent of $W(\cdot)$. 

\begin{remark}\label{remark_gamma_filtration}
    We show in Appendix \ref{subsec_equality_of_gamma_problems} that the analysis of the above problem does not change if, at each time $t \in [0, \infty)$, we observe whether or not the stochastic deadline $\gamma$ has occurred, i.e., we enlarge the observation $\sigma$-algebra $\mathcal{F}(t)$ to include the indicator process $\left(\mathbf{1}_{\{\gamma > t\}}\right)_{t\geq0}$.
\end{remark}

Before proceeding, we take a moment to provide some intuition. 
As in the ``hiring problem'' of Ekström and Wang \cite{EksWang24}, which motivated our paper, the process \eqref{observation_process_model_sec} can be interpreted as noisy observations of a job applicant’s interview performance, with true ability level given by $(a\theta + b)$. At any moment in time, the company may choose to stop the interview process and decide whether to hire the candidate. If the candidate is hired, the company’s reward corresponds to the applicant’s actual ability level. However, to account for competition in the job market, the company faces a stochastic deadline -- the point at which the candidate might accept an offer from another employer and become unavailable. Naturally, this deadline may depend on the candidate's skill level $\theta$, but is independent of the noise in the observation $W(\cdot)$.

\begin{remark}\label{rem_different_formulation}
    The above formulation differs from the one in \eqref{expected_payoff_heuristic} by the presence of the decision rule $d$ and the term $\mathbf{1}_{\{d = 1\}}$ under the expectation. The purpose of these adjustments is to allow for rejecting the reward $(a\theta + b)$ at the terminal time $T$, in case the observer believes that the value of the reward is negative. It will become clear soon that, at all other times $t < T$, it only makes sense to stop the observations, i.e. select $\tau < T$, if the associated decision rule $d = 1$.
\end{remark}

\begin{remark}\label{remark_parameters_explanation}
    The objective \eqref{expected_reward_function_with_decision} justifies the assumption $a + b \ge 0 \ge b$ on the parameters $a$ and $b$, which we made earlier. Indeed, if we had $(a + b)\, b \ge 0$, it would obviously be optimal to stop immediately and choose $d = 1$ if $a + b \ge 0, \, b \ge 0$; whereas the problem is trivial in the case $a + b \leq 0, \, b \le 0$, since the reward is non-positive regardless of the choice of stopping time. Finally, the configuration $a + b \le 0 \le b$ is symmetric to the one we are considering by taking $\bar a \coloneqq -a, \bar b \coloneqq a + b$, $\bar \theta \coloneqq 1-\theta$.
\end{remark}

The expression under the expectation in \eqref{expected_reward_function_with_decision} can be rewritten in a form more amenable to analysis. First, since the decision rule $d$ is $\mathcal{F}(\tau)$--measurable, the tower property of conditional expectations expresses the expected reward $J_T(\tau, d)$ as
\begin{equation*}
    J_T(\tau, d) = \ex\Big[
        \ex\Big[
            \big(
            a\theta + b
            \big)
            \cdot 
            \mathbf{1}_{\{\tau < \gamma\}}  \, \Big\vert \, \mathcal{F}(\tau)
        \Big]
        \cdot
        \mathbf{1}_{\{d = 1\}}
    \Big].
\end{equation*}
Thus, given stopping time $\tau$, the optimal decision rule is given by
\begin{equation}\label{optimal_decision_rule}
    d = 
    \begin{cases}
        1, & \ex\Big[
                \big(
                a\theta + b
                \big)
                \cdot 
                \mathbf{1}_{\{\tau < \gamma\}}  \, \Big| \, \mathcal{F}(\tau)
            \Big] \ge 0, \\
        0, & \text{otherwise}.
    \end{cases}
\end{equation}

Secondly, it will be convenient to rewrite the conditional expectation in \eqref{optimal_decision_rule} in terms of a random discounting rather than an indicator. For that reason, we denote the conditional survival function of $\gamma$ by
\begin{equation}\label{def_survival_function}
    c(\theta, t) \coloneqq \ex \big[\mathbf{1}_{\{\gamma > t\}} \mid \theta \big] = \pr \big(\gamma > t \mid \theta \big), \quad 0 \le t < \infty,
\end{equation}
and use the tower property again, as well as the independence of $\theta$ and $W(\cdot)$, to rewrite 
\begin{equation}\label{rewritten_objective}
    \begin{split}
        \ex\Big[
            \big(
            a\theta + b
            \big)
            \cdot 
            \mathbf{1}_{\{\tau < \gamma\}}  \, \Big| \, \mathcal{F}(\tau)
        \Big]
        &= 
        \ex\Big[\ex\left[
            \big(
            a\theta + b
            \big)
            \cdot 
            \mathbf{1}_{\{\tau < \gamma\}}  \, \Big| \, \mathcal{F}^{\theta, W}(\infty)
        \right] \, \Big| \, \mathcal{F}(\tau) \Big]
        \\&=
        \ex\left[ \big(
            a\theta + b
            \big)
            \cdot 
            \ex\left[
            \mathbf{1}_{\{\tau < \gamma\}}  \, \Big| \, \mathcal{F}^{\theta, W}(\infty)
        \right] \, \Big| \, \mathcal{F}(\tau) \right]
        \\&=
        \ex\left[ \big(
            a\theta + b
            \big)
            \cdot 
            \ex\left[
            \mathbf{1}_{\{t \le \gamma\}}  \, \Big| \, \theta
        \right] \Bigg|_{t = \tau} \, \Big| \, \mathcal{F}(\tau) \right] 
        \\&=
        \ex\left[ \big(
            a\theta + b
            \big)
            \cdot 
            c(\theta, \tau)
            \, \Big| \, \mathcal{F}(\tau) \right] \eqqcolon L(\tau).
    \end{split}
\end{equation}
The third equality follows from the fact that $(\theta,\gamma)$ is independent of the Brownian motion $W(\cdot)$.

Combining the above arguments, the problem of maximizing \eqref{expected_reward_function_with_decision} reduces to the optimal stopping problem of maximizing $\ex \left[ (L(\tau))^+ \right]$ as in \eqref{rewritten_objective}
over the collection of stopping times $\tau \in \mathcal{T}_T$, where $c: \{0, 1\} \times [0, T] \to [0, 1]$ is the function defined in \eqref{def_survival_function}, and $(x)^+ = \max (x, 0)$.

\subsection{Generalized model}

From now onward, we will consider a slightly more general formulation. Namely, we shall consider the optimal stopping problem of maximizing the expected payoff 
\begin{equation}\label{expected_reward_function}
    J_T(\tau) \coloneqq \ex\left[\Big(
        \ex\Big[\theta \cdot c_1(\tau) - (1-\theta) c_0(\tau) \, \Big| \, \mathcal{F}(\tau)
        \Big]\Big)^+
    \right],
\end{equation}
always in the setting of \eqref{observation_process_model_sec} and with the assumptions and notation developed there, and with some given appropriate ``discount'' functions $c_i(\cdot): [0, T] \to [0, \infty), \, i = 0, 1$. The original setting is a special case of the problem of maximizing \eqref{expected_reward_function}, with
\begin{equation}\label{eqn:orig.prob.embedding}
        c_0(t) = -b \cdot c(0,t) \quad \text{ and } \quad
        c_1(t) = (a+b) \cdot c(1,t).
\end{equation}

\begin{remark}
    This framework is general enough to allow for rewards in \eqref{expected_reward_function_with_decision} that depend on, but are not necessarily equal to the drift of the signal process in \eqref{observation_process_model_sec}. Additionally, the reward can be further multiplied by any deterministic discount factor. Both of these effects are ultimately absorbed into the functions $c_i(\cdot)$, $i=0, 1$.
\end{remark}

We will allow in \eqref{expected_reward_function} any discount functions $c_i(\cdot), \, i = 0, 1,$ that satisfy
\begin{enumerate}[leftmargin=35pt, label = \textbf{(A\arabic*)}]
    \item $c_i(t) > 0$ for every $t \in [0,T)$, $i = 0,1,$ \label{reg_assm_1}
	\item $c_i(\cdot), \, i = 0, 1$, are continuous on $[0, T]$ and of class $C^2([0, T))$,\label{reg_assm_2}
    \item $c_i(\cdot), \, i = 0, 1,$ are non-increasing, \label{reg_assm_3}
    \item $c_i'(\cdot)$, $i = 0, 1$, are bounded on $[0,T)$, and \label{reg_assm_4}
    \item For all $t \in [0, T)$ we have
    \begin{equation*}
        \frac{c_0'(t)}{c_0(t)} - \frac{c_1'(t)}{c_1(t)} > 0.
    \end{equation*}
    \label{reg_assm_5}
\end{enumerate}

Here, \ref{reg_assm_1}, \ref{reg_assm_2}, and \ref{reg_assm_4} are technical regularity assumptions. 
The assumption \ref{reg_assm_3} casts the functions $c_i(\cdot), \, i = 0, 1$, as discounts, so the non-increasing property aligns well with the nature of the problem in \eqref{def_survival_function}. 
Finally, the assumption \ref{reg_assm_5} provides a relationship between the growth rates of the functions $c_0(\cdot)$ and $c_1(\cdot)$, which requires the function $c_1(\cdot)$ to decay relatively faster than $c_0(\cdot)$. We believe this relationship is natural in the context of our problem, especially recalling the ``hiring'' formulation, since it can be interpreted as saying that a strong candidate becomes unavailable sooner than a weak one.

\begin{remark} \label{relaxation_of_assumptions_remark}The assumption \ref{reg_assm_1} may be changed to $c_i(t) \ge 0$ for every $t \in [0,T]$, $i = 0,1,$ without loss of generality. This is because the problem of maximizing \eqref{expected_reward_function} over any horizon $T > T^* \coloneqq \min(T_1, T_2)$ with $T_i \coloneqq \inf\{t \ge 0 : c_i(t) = 0\}$, $i = 0,1$, is equivalent to the problem of maximizing \eqref{expected_reward_function} over the horizon $T^*$. This follows from the expression under the expectation in \eqref{expected_reward_function} and an observation that $c_1(\cdot)$ and $1-\theta$ are both non-negative. Thus, there is no reason to continue observations after any of $T_1$ or $T_2$.
Separately, by leaving \ref{reg_assm_1} \textit{unchanged}, \ref{reg_assm_3} can be relaxed to $c_0(\cdot) + c_1(\cdot)$ and \textit{either} of $c_i(\cdot), \, i = 0, 1$, must be non-increasing. This extension follows readily from our proofs, but we impose our stronger assumption above for the sake of readability.
\end{remark}

\begin{remark}
   When interpreted in terms of the (conditional) survival distributions of $\gamma$ as in \eqref{def_survival_function}, it is natural for $c_0(\cdot)$ and $c_1(\cdot)$ to be lower semicontinuous. At the same time, in order for the optimal stopping problem to be well-posed, it is critical to further require that these functions are continuous. Were this not the case, \eqref{expected_reward_function} would fail to be upper semicontinuous, and an optimal stopping time may no longer exist in general. Moreover, with $T = \infty$, the assumption \ref{reg_assm_2} means that the functions $c_i(\cdot), \, i = 0, 1$, are continuous on $[0, \infty]$ with respect to the topology associated with the one-point compactification of the real line.
    In particular, we require $c_i(\infty) = \lim_{t \rightarrow \infty} c_i(t), \, i = 0, 1$.
\end{remark}

We are now ready to study the problem \eqref{expected_reward_function} under the above assumptions, and denote the value function of the problem by
\begin{equation}\label{value_function}
    V_T \coloneqq \sup\limits_{\tau \in \mathcal{T}_T} J_T(\tau).
\end{equation}

\section{Equivalent Formulations}\label{sec_equivalent_formulations}

The optimal stopping problem \eqref{expected_reward_function}--\eqref{value_function} is formulated in terms of the process $X(\cdot)$ of \eqref{observation_process_model_sec}, and the random variable $\theta$. 
As is common in sequential testing (see, e.g., \cite{Shiryaev67, GapPes04, EksVai15}), we reformulate the problem in terms of the conditional (a posteriori) probability process $\Pi = \{\Pi(t), 0 \le t < \infty\}$, given by
\begin{equation}\label{conditional_probability_process_def}
    \Pi(t) = \pr\big(\theta = 1 \mid \mathcal{F}(t)\big), \quad 0 \le t < \infty.
\end{equation}
In addition, several of our proofs rely on further equivalent reformulations. This section presents these reformulations and demonstrates their equivalence to the original problem. 

In Subsection \ref{subsec_aposteriori_process}, we cast the problem \eqref{value_function} in terms of the a posteriori probability process $\Pi(\cdot)$, embed it into a general Markovian framework, and show how this setup can be reverted to the original problem. Subsection \ref{subsec_equivalent_osp} introduces a variation of this Markovian formulation with a slightly modified payoff function and establishes its equivalence to the prior formulations. Finally, in Subsection \ref{subsec_american_options_formulation}, we present yet another equivalent formulation, this time in the context of American options. This latter formulation proves particularly useful for verifying the smooth-fit property of the value function and analyzing some technical properties of the optimal stopping boundary. Subsections \ref{subsec_equivalent_osp} and \ref{subsec_american_options_formulation} can be skipped until their results are used in later sections.

\subsection{A Posteriori Probability Process and General Markovian Setup}\label{subsec_aposteriori_process}

We start by establishing the essential properties of the process $\Pi(\cdot)$, defined in \eqref{conditional_probability_process_def}.

\begin{Lem}\label{lem_properties_of_cond_prob_process}
    The conditional probability process $\Pi(\cdot)$, defined in \eqref{conditional_probability_process_def}, satisfies
    \begin{equation}\label{cond_prob_dynamics}
        \Pi(t) = p + \int_0^t a\Pi(s)(1-\Pi(s)) \, dB(s), \quad 0 \le t < \infty,
    \end{equation}
    where the process
    \begin{equation*}
        B(t) = X(t) - \int_0^t (a\Pi(s) + b) \, ds, \quad 0 \le t < \infty,
    \end{equation*}
    known as the ``innovation process'', is an $\mathbb{F}$--Brownian motion.
    The processes $B(\cdot)$, $X(\cdot)$, and $\Pi(\cdot)$ generate the same filtration $\mathbb{F}$, and the process $\Pi(\cdot)$ is a strong Markov process with respect to this filtration. Furthermore, the following relation between the processes $\Pi(\cdot)$ and $X(\cdot)$ holds almost surely:
    \begin{equation} \label{X_and_pi_relationship}
        \Pi(t) = \frac{p \cdot \exp\left((a + b) X(t) - \frac{(a + b)^2}{2}t \right)}{p \cdot \exp \left((a + b)X(t)- \frac{(a + b)^2}{2}t\right) + (1-p) \cdot \exp \left(bX(t) -  \frac{b^2}{2}t\right)}, \quad 0 \le t < \infty.
    \end{equation}
\end{Lem}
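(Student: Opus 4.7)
The plan is to derive the explicit representation \eqref{X_and_pi_relationship} first via a Bayes/change-of-measure argument, then extract the SDE \eqref{cond_prob_dynamics} by applying It\^o's formula, and finally deduce the innovation property of $B(\cdot)$, the filtration equivalence, and the strong Markov property of $\Pi(\cdot)$ from this representation. For the explicit formula, I would introduce a reference probability measure $\widetilde{\mathbb P}$, equivalent to $\pr$ via Girsanov, under which $X(\cdot)$ is a standard Brownian motion independent of $\theta$ (whose distribution is unchanged). With Radon--Nikodym derivative
\begin{equation*}
Z(t) \coloneqq \exp\!\Bigl( (a\theta + b)\,X(t) - \tfrac{(a\theta+b)^{2}}{2}\,t \Bigr),
\end{equation*}
Bayes' formula yields $\Pi(t) = \ex^{\widetilde{\mathbb P}}\bigl[Z(t)\mathbf{1}_{\{\theta=1\}} \,\bigl|\, \mathcal{F}(t)\bigr] / \ex^{\widetilde{\mathbb P}}\bigl[Z(t) \,\bigl|\, \mathcal{F}(t)\bigr]$. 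Evaluating both conditional expectations by the $\widetilde{\mathbb P}$-independence of $\theta$ and $X$ (separating $\theta = 0$ and $\theta = 1$) delivers \eqref{X_and_pi_relationship} directly.

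To obtain the SDE, I would pass to the log-odds $L(t) \coloneqq \log\bigl(\Pi(t)/(1-\Pi(t))\bigr)$. Dividing numerator by denominator in \eqref{X_and_pi_relationship} gives
\begin{equation*}
L(t) = \log\!\tfrac{p}{1-p} + a\,X(t) - \tfrac{a(a+2b)}{2}\,t,
\end{equation*}
so substituting $dX(t) = (a\Pi(t) + b)\,dt + dB(t)$ (with $B$ as in the statement) yields $dL(t) = a^{2}\bigl(\Pi(t) - \tfrac{1}{2}\bigr)\,dt + a\,dB(t)$. Applying It\^o's formula to $\Pi = 1/(1+e^{-L})$ produces a drift $a^{2}\Pi(1-\Pi)(\Pi - \tfrac{1}{2}) + \tfrac{a^{2}}{2}\Pi(1-\Pi)(1-2\Pi)$, which cancels identically, leaving $d\Pi(t) = a\,\Pi(t)\bigl(1-\Pi(t)\bigr)\,dB(t)$.

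For the remaining claims, $B(\cdot)$ is continuous, $\mathbb{F}$-adapted, and has quadratic variation $t$, because $\langle X\rangle_{t} = t$ while the corrector $\int_{0}^{\cdot}(a\Pi(s)+b)\,ds$ has bounded variation. Its martingale property follows from a tower argument that inserts $\theta$: for $s \le t$,
\begin{equation*}
\ex\bigl[B(t) - B(s) \,\big|\, \mathcal{F}(s)\bigr] = \int_{s}^{t} \ex\bigl[(a\theta + b) - (a\Pi(u) + b) \,\big|\, \mathcal{F}(s)\bigr]\,du = 0,
\end{equation*}
by Fubini and the definition \eqref{conditional_probability_process_def} of $\Pi$. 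L\'evy's characterization then identifies $B(\cdot)$ as an $\mathbb{F}$-Brownian motion. For the filtration equivalence, the right-hand side of \eqref{X_and_pi_relationship} is strictly monotone in $X(t)$ for each fixed $t$, so $\mathbb{F}^{X} = \mathbb{F}^{\Pi}$; pathwise, $B$ is expressed through $X$ and $\Pi$, and conversely $\Pi$ is the unique strong solution of $d\Pi = a\,\Pi(1-\Pi)\,dB$ starting from $p$, because $\pi \mapsto a\pi(1-\pi)$ is Lipschitz on $[0,1]$ and $\Pi$ stays in $(0,1)$ almost surely (as visible from \eqref{X_and_pi_relationship}). The strong Markov property of $\Pi(\cdot)$ is then a direct consequence of this unique strong-solution representation.

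The main technical point requiring care is the identification $\mathbb{F}^{B} = \mathbb{F}^{\Pi}$: it hinges on pathwise uniqueness for the SDE together with $\Pi$ avoiding the degeneracy points $\{0,1\}$ of the diffusion coefficient, so that $B$ may indeed be recovered pathwise from $\Pi$ and the explicit structure of the drift. Everything else reduces to a careful but routine combination of It\^o's formula, the tower property, and L\'evy's criterion.
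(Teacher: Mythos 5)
The paper does not give a proof here; it simply cites Shiryaev's monograph (pp.~180--182), so there is no text to compare against step by step. Your derivation is correct and essentially reproduces the standard filtering-theory argument that Shiryaev's treatment also follows: (i) a reference measure $\widetilde{\mathbb P}$ under which $X$ is Brownian and independent of $\theta$, with Bayes' formula producing \eqref{X_and_pi_relationship} (Novikov holds trivially since the drift $a\theta+b$ is bounded); (ii) the log-odds $L(t)=\log\tfrac{p}{1-p}+aX(t)-\tfrac{a(a+2b)}{2}t$ combined with It\^o on $\Pi=1/(1+e^{-L})$, with the drift cancellation correctly computed, giving \eqref{cond_prob_dynamics}; (iii) the innovation-process identification via L\'evy's criterion, using $\langle X\rangle_t=t$, $\ex[W(t)-W(s)\mid\mathcal F(s)]=0$ (legitimate since $\mathcal F(s)\subseteq\mathcal F^{\theta,W}(s)$), and the tower identity $\ex[\Pi(u)\mid\mathcal F(s)]=\Pi(s)=\ex[\theta\mid\mathcal F(s)]$ for $u\ge s$.

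One small remark on the filtration equivalence. Your chain is really $\mathbb F^X=\mathbb F^\Pi$ (from the strict monotonicity of \eqref{X_and_pi_relationship} in $X(t)$, which needs $a>0$ --- true here since $a+b\ge 0\ge b$ with a two-point distribution forces $a>0$), $\mathbb F^B\subseteq\mathbb F^X$ (since $B=X-\int_0^\cdot(a\Pi+b)\,ds$ and $\Pi$ is $\mathbb F^X$-adapted), and $\mathbb F^\Pi\subseteq\mathbb F^B$ (strong solution of an SDE with Lipschitz coefficients is adapted to the driving Brownian filtration). That closes the cycle without ever needing to recover $B$ pathwise from $\Pi$ by dividing by $a\Pi(1-\Pi)$; the nondegeneracy $\Pi\in(0,1)$ is a pleasant consequence of \eqref{X_and_pi_relationship} but is not actually required for the filtration identity, contrary to the emphasis in your closing paragraph. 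The strong Markov property then indeed follows from strong existence and pathwise uniqueness as you state.
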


\begin{proof}
All statements of Lemma \ref{lem_properties_of_cond_prob_process} are well-known results in filtering theory. 
For detailed proofs, we refer the reader to the monograph Shiryaev \cite{Shiryaev78} (see pp. 180-182).
\end{proof}

We now recast the original optimal stopping problem \eqref{value_function} for the process $X(\cdot)$ as an equivalent problem for the process $\Pi(\cdot)$. 
We start by observing that \eqref{expected_reward_function} can be written as
\begin{equation}\label{expected_payoff_via_Pi_process}
    J_T(\tau) = \ex\left[\Big(c_1(\tau) \, \Pi(\tau) - c_0(\tau) \,(1 - \Pi(\tau))\Big)^+\right].
\end{equation}
Indeed, for any $\tau \in \mathcal{T}_T$, we have
\begin{equation*}
    \begin{split}
        J_T(\tau) &= 
        \ex\left[\Big(
            \ex\Big[\theta \cdot c_1(\tau) - (1 - \theta) \cdot c_0(\tau) \, \Big| \, \mathcal{F}(\tau)
            \Big]
        \Big)^+\right]
        \\&= 
        \ex\left[\Big(
            \ex\Big[
                c_1(\tau) \cdot \mathbf{1}_{\{\theta=1\}} - c_0(\tau) \cdot \mathbf{1}_{\{\theta=0\}} \, \Big| \, \mathcal{F}(\tau)
            \Big]
        \Big)^+\right],
    \end{split}
\end{equation*}
and this last expression is equal to the expectation in \eqref{expected_payoff_via_Pi_process}, from the definition of the process $\Pi(\cdot)$ and the fact that $\tau$ is $\mathcal{F}_\tau$--measurable.

\begin{remark}
    The expectation in \eqref{expected_payoff_via_Pi_process} is still well-defined for the infinite time horizon $T = \infty$. 
    Indeed, on the event $\{\tau < \infty\}$ this is obvious, while on the event $\{\tau = \infty\}$ the expression is well-defined since the functions $c_i(\cdot), \, i = 0, 1$ are defined on $[0, T]$, while the process $\Pi(\cdot)$ of \eqref{cond_prob_dynamics} has a limit at infinity (by the Martingale Convergence Theorem).
\end{remark}

The identity \eqref{expected_payoff_via_Pi_process} and the fact that the processes $X(\cdot)$ and $\Pi(\cdot)$ generate the same filtrations enable us to reinterpret the optimal stopping problem \eqref{value_function} as maximizing the expression \eqref{expected_payoff_via_Pi_process} over all stopping times $\tau$ of the filtration generated by the process $\Pi(\cdot)$. To solve this problem, we embed it into a general Markovian framework allowing for arbitrary initial data $(t,\pi)$. 

\subsubsection{General Markovian Framework}

We fix a probability space $(\Omega, \fc, \pr)$, supporting a standard Brownian motion $B = \{B(t), \, 0 \le t < \infty\}$. For each $\pi \in [0, 1]$, we consider a stochastic process $\Pi^\pi = \{\Pi^\pi(t), 0 \le t < \infty \}$ with state space $[0, 1]$, which satisfies 
\begin{equation}\label{cond_process_with_arbitrary_position}
    \Pi^\pi(t) = \pi + \int_0^t a\, \Pi^\pi(s)(1-\Pi^\pi(s)) \, dB(s), \quad 0 \le t < \infty.
\end{equation}
The processes $\Pi^\pi(\cdot), \, \pi \in [0,1]$ mimic the conditional process \eqref{cond_prob_dynamics} with arbitrary starting positions $\pi$. For any $\pi \in [0,1]$, the Lipschitz property of the coefficients on $[0, 1]$ guarantees the existence and uniqueness of the strong solution to the equation \eqref{cond_process_with_arbitrary_position} (see, e.g., Chapter 5.2.B in Karatzas \& Shreve \cite{BMSC}). 
Moreover, the processes $B(\cdot)$ and $\Pi^\pi(\cdot)$, $\pi \in (0,1)$, generate the same filtration, as in the proof of Lemma \ref{lem_properties_of_cond_prob_process}.
Hence, regardless of $\pi$, we let $\mathbb{F} \coloneqq \{\mathcal{F}(t)\}_{t \ge 0}$ be the augmentation of the filtration generated by the processes $\Pi^\pi(\cdot)$ $\pi \in (0,1)$, i.e., we set $\mathcal{F}(t) \coloneqq \overline{\sigma}(B(s), 0 \leq s \le t)$. In addition, we denote by $\mathcal{T}_T$ the collection of all $\mathbb{F}$--stopping times $\tau$ such that $\pr(\tau \le T) = 1$.

For fixed $\pi \in [0,1]$, time horizon $T$, and initial time $t$, we now consider the corresponding optimal stopping problem of maximizing the expected reward 
\begin{equation}\label{expected_reward_general_pi}
    J_T(t, \pi, \tau) \coloneqq \ex\left[\Big(c_1(t+\tau) \, \Pi^\pi(\tau) - c_0(t+\tau) \,(1 - \Pi^\pi(\tau))\Big)^+\right]
\end{equation}
over all stopping times $\tau \in \mathcal{T}_{T-t}$, where the functions $c_i: [0, T] \to [0, 1], \, i = 0, 1$, satisfy the assumptions \ref{reg_assm_1}--\ref{reg_assm_5}.
We denote the gain function of this problem by 
\begin{equation}\label{gain_function}
    G(t, \pi) \coloneqq \big(c_1(t)\pi - c_0(t)(1-\pi)\big)^+,
\end{equation}
and its value function by 
\begin{equation}\label{value_function_general_osp}
    V_T(t, \pi) \coloneqq \sup\limits_{\tau \in \mathcal{T}_{T-t}} J_T(t, \pi, \tau) = \sup\limits_{\tau \in \mathcal{T}_{T-t}} \ex\big[G \big(t+\tau, \Pi^\pi(\tau)\big)\big].
\end{equation}

Clearly, the original optimal stopping problem \eqref{value_function} can be embedded into the one above by setting $\pi = p$, $t = 0$. Therefore, if we find the value function $V_T(\cdot, \cdot)$ of \eqref{value_function_general_osp} and the corresponding optimal stopping time, we will automatically solve the original optimal stopping problem as well. For some future arguments, it will be convenient to treat the processes $\Pi^\pi(\cdot), \, \pi \in [0,1]$ of the new problem, as those obtained from the original problem, i.e., a posteriori probability processes. Thus, we conclude this subsection by describing how to revert the new problem to the original.

\subsubsection{Reverting Back to the Original Problem}\label{subsubsec_reversion_back}

Consider the given probability space $(\Omega, \fc, \pr)$, a starting position $\pi \in [0, 1]$, and the process $\Pi^\pi(\cdot)$ of \eqref{cond_process_with_arbitrary_position}. For any such $\pi$, consider a new probability space $(\Omega^\pi, \fc^\pi, \pr^\pi)$ rich enough to support a Bernoulli random variable $\theta^\pi$, satisfying $\pr^\pi(\theta^\pi=1) = \pi = 1 - \pr^\pi(\theta^\pi=0)$, and a standard Brownian motion $W^\pi = \{W^\pi(t), \, 0 \le t < \infty\}$ independent of $\theta^\pi$. Given $\pi, \, \theta^\pi$, and $W^\pi(\cdot)$, consider on this new probability space the stochastic process
\begin{equation*}
    X^\pi(t) = (a\theta^\pi + b)\,t + W^\pi(t), \quad 0 \le t < \infty.
\end{equation*}
Since we want to build a new process, which resembles the behavior of $\Pi^\pi(\cdot)$ but is also an \textit{a posteriori probability process}, we define the conditional probability process 
\begin{equation}\label{cond_process_general_pi}
    \widehat{\Pi}^\pi(t) \coloneqq \pr^\pi\left(\theta^\pi = 1 \mid \mathcal{F}^{X^\pi}(t)\right), \quad 0 \le t < \infty,
\end{equation}
where $\mathcal{F}^{X^\pi}(t) \coloneqq \overline{\sigma}(X^\pi(s), 0 \le s \le t)$.

Lemma \ref{lem_properties_of_cond_prob_process} implies that this process of \eqref{cond_process_general_pi} satisfies the equation \eqref{cond_process_with_arbitrary_position}. Thus, the strong uniqueness of the solution to this equation implies that the processes $\Pi^\pi(\cdot)$ and $\widehat{\Pi}^\pi(\cdot)$ have the same law. Hence, denoting by $\ex^{\pi}$ the expectation with respect to $\pr^{\pi}$ and recalling the notation of \eqref{gain_function} and \eqref{value_function_general_osp}, we obtain the desired equality
\begin{equation*} 
    V_T(t, \pi) = \sup_{\tau \in \widehat{\mathcal{T}}^\pi_{T-t}} \ex^{\pi} \left[ G\left(t + \tau, \widehat \Pi^{\pi}(\tau)\right)\right].
\end{equation*}
Here, $\widehat{\mathcal{T}}^\pi_{T-t}$ denotes the set of all $\left\{\mathcal{F}^{X^\pi}(t)\right\}_{t \ge 0}$ stopping times $\tau$ such that $\pr^{\pi}(\tau \leq T-t) = 1$, and we are able to use this filtration as a direct consequence of Lemma \ref{lem_properties_of_cond_prob_process}. It is now clear that the processes $\Pi^\pi(\cdot)$ and $\widehat{\Pi}^\pi(\cdot)$ are essentially identical. We use the notation above when we need the a posteriori probability property of the observation process.

\subsection{An Equivalent Formulation via a Different Payoff Function}\label{subsec_equivalent_osp}

For several future arguments, it will be convenient to study an alternative optimal stopping problem, whose only difference from the original problem \eqref{value_function_general_osp} is the modified gain function
\begin{equation}\label{gain_function_with_indicator}
    \widetilde{G}_T(t, \pi) \coloneqq g(t,\pi) \cdot \mathbf{1}_{\{t < T\}},\  
    \ \
    g(t, \pi) \coloneqq c_1(t)\pi - c_0(t)(1-\pi).
\end{equation}
The original gain function $G(\cdot, \cdot)$ of \eqref{gain_function} can be written as $G(t, \pi) = g(t, \pi)^+$, so the only difference between $\widetilde{G}_T(\cdot, \cdot)$ and $G(\cdot, \cdot)$ is that the positive part is ``changed'' to a time indicator.

For a fixed $\pi \in [0,1]$, consider now a corresponding optimal stopping problem for the same process $\Pi^\pi(\cdot)$ of maximizing the expected reward \begin{equation*}
        \widetilde{J}_T(t, \pi, \tau) 
        \coloneqq
        \ex\left[
            \widetilde{G}_T(t + \tau, \Pi^\pi(\tau) 
            \right] 
        = 
        \ex\left[
                \Big(c_1(t+\tau) \, \Pi^\pi(\tau) - c_0(t+\tau) \, \big(1 - \Pi^\pi(\tau) \big) \Big) \cdot \mathbf{1}_{\{\tau < T\}}
            \right]
\end{equation*}
again over all stopping times $\tau \in \mathcal{T}_{T-t}$, and denote the new value function by 
\begin{equation}\label{value_function_with_indicator_gain}
    \widetilde{V}_T(t, \pi) \coloneqq \sup\limits_{\tau \in \mathcal{T}_{T-t}} \widetilde{J}_T(t, \pi, \tau) = \sup\limits_{\tau \in \mathcal{T}_{T-t}} \ex\left[\widetilde{G}_T(t+\tau, \Pi^\pi(\tau))\right].
\end{equation}

This new optimal stopping problem may fail to admit an optimal stopping time. The reason is a potential failure (for some choices of the discount functions $c_i(\cdot), i = 0, 1$) of the gain function $\widetilde{G}_T(\cdot, \cdot)$ to be upper semi-continuous at the terminal time $T$, which is a common requirement in these types of problems (see, e.g., Chapter I.2.2 in \cite{PesShi06}). However, even under these circumstances, the value function \eqref{value_function_with_indicator_gain} is well-defined and we have the following equivalence.

\begin{Prop}\label{prop_gain_functions_equivalnce}
    The problems \eqref{value_function_general_osp} and \eqref{value_function_with_indicator_gain} have the same value functions
    \begin{equation}\label{equality_value_functions}
        V_T(t, \pi) = \widetilde{V}_T(t, \pi) = \sup_{\tau \in \widehat{\mathcal{T}}^\pi_{T-t}} \ex^{\pi} \left[ \widetilde{G}_T\left(t + \tau, \widehat \Pi^{\pi}(\tau)\right)\right], \quad \forall \,  (t, \pi) \in [0, T) \times [0, 1].
    \end{equation}
\end{Prop}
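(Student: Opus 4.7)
The plan is to establish the equality $V_T(t, \pi) = \widetilde V_T(t, \pi)$ by proving both inequalities; the third expression in \eqref{equality_value_functions} will then follow automatically, since Subsection \ref{subsubsec_reversion_back} identifies $\Pi^\pi$ and $\widehat\Pi^\pi$ as strong solutions to the same SDE \eqref{cond_process_with_arbitrary_position}, so the two processes share the same law and the corresponding suprema coincide. The inequality $V_T \geq \widetilde V_T$ is pointwise: from $g^+ \geq g$ and $g^+ \geq 0$ one has $G(s, \pi) \geq \widetilde G_T(s, \pi)$ for all $(s, \pi)$, hence $J_T(t, \pi, \tau) \geq \widetilde J_T(t, \pi, \tau)$ for every admissible $\tau$, and the supremum inequality follows.

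For the reverse inequality, the idea is to take an $\epsilon$-optimal $\tau^* \in \mathcal T_{T-t}$ for the original problem and build a perturbation, admissible for the modified problem, that achieves essentially the same expected payoff. Introduce the truncations $\tau^*_n := \min(\tau^*, T - t - 1/n)$ in the finite-horizon case (respectively $\min(\tau^*, n)$ when $T = \infty$), so that $t + \tau^*_n < T$ almost surely. Set $A_n := \{g(t + \tau^*_n, \Pi^\pi(\tau^*_n)) \geq 0\} \in \mathcal F(\tau^*_n)$ and define
\begin{equation*}
\tilde\tau_n := \tau^*_n\,\mathbf{1}_{A_n} + (T - t)\,\mathbf{1}_{A_n^c}.
\end{equation*}
A direct check using $A_n \in \mathcal F(\tau^*_n)$ shows $\tilde\tau_n \in \mathcal T_{T-t}$. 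On $A_n$ we have $t + \tilde\tau_n < T$ and $\widetilde G_T(t + \tilde\tau_n, \Pi^\pi(\tilde\tau_n)) = g(t + \tau^*_n, \Pi^\pi(\tau^*_n)) \geq 0$, whereas on $A_n^c$ the terminal indicator in $\widetilde G_T$ vanishes, so
\begin{equation*}
\widetilde J_T(t, \pi, \tilde\tau_n) = \ex\bigl[g(t + \tau^*_n, \Pi^\pi(\tau^*_n))^+\bigr].
\end{equation*}

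To close the argument I let $n \to \infty$. By sample path continuity of $\Pi^\pi$ (supplemented by martingale convergence at infinity in the unbounded-horizon case) and continuity of $c_0, c_1$ on $[0, T]$, read through the one-point compactification when $T = \infty$, one obtains $g(t + \tau^*_n, \Pi^\pi(\tau^*_n)) \to g(t + \tau^*, \Pi^\pi(\tau^*))$ almost surely. Since $c_0, c_1$ are bounded on the compact interval, dominated convergence yields $\widetilde J_T(t, \pi, \tilde\tau_n) \to J_T(t, \pi, \tau^*) \geq V_T(t, \pi) - \epsilon$, and taking $\epsilon \to 0$ delivers $\widetilde V_T \geq V_T$. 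The main obstacle is precisely the terminal-time discrepancy between $G$ and $\widetilde G_T$: any original strategy that profits at the horizon has to be replaced by one stopping strictly earlier, and the scheme works only because the regularity assumption \ref{reg_assm_2} guarantees continuity of the discount functions at $T$ (including at $+\infty$), so the cost of this perturbation vanishes in the limit.
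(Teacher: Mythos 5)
Your argument is correct and follows essentially the same strategy as the paper: take an $\eps$-optimal stopping time for $V_T$, truncate it to stop strictly before $T$, and on the event where the gain $g$ is negative push the stop to the terminal time so that the indicator in $\widetilde G_T$ kills the payoff. Your construction $\tilde\tau_n = \tau^*_n\,\mathbf{1}_{A_n} + (T-t)\,\mathbf{1}_{A_n^c}$ with $A_n = \{g(t+\tau^*_n, \Pi^\pi(\tau^*_n)) \ge 0\}$ is a re-parameterization of the paper's three-case stopping time $\tau_\eps^{(n)}$ in the finite-horizon argument, and the observation that it yields the exact identity $\widetilde J_T(t,\pi,\tilde\tau_n) = \ex\bigl[g(t+\tau^*_n, \Pi^\pi(\tau^*_n))^+\bigr]$ is a slightly cleaner route to the limit than the paper's chained lower bound on $\liminf_n \widetilde J_T - J_T$; you also unify the finite and infinite horizons (the paper handles $T=\infty$ separately without truncation, which is marginally simpler but not essential). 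Your treatment of the third member of \eqref{equality_value_functions} via equality in law of $\Pi^\pi$ and $\widehat\Pi^\pi$ matches the paper's reasoning in Subsection \ref{subsubsec_reversion_back}, and the dominated-convergence step is justified exactly as you state, since $|g(s,\pi)| \le c_0(0)+c_1(0)$ by \ref{reg_assm_3}.
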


The proof follows standard arguments and is located in Appendix \ref{subsec_app_gain_equivalence}.

\subsection{An Equivalent Formulation via American Options}\label{subsec_american_options_formulation}

Another reformulation of the problem \eqref{value_function_general_osp} is in the context of American call options. This choice is motivated by the work of Jaillet, Lamberton, and Lapeyre \cite{Lamberton}, whose results will be crucial in our proof of the spatial differentiability (smooth-fit property) of the value function $V_T(\cdot, \cdot)$ and of some other arguments. We describe briefly a common framework for American options, and then show that the problem \eqref{value_function_general_osp} can be embedded into this framework after appropriate changes of measure and variables inspired by \cite{EksWang24}. Specifically, the measure change recovers the conditional distribution of the posterior given a realization of $\theta$ in the sequential testing experiment of \cite{Shiryaev67}.

In the American option framework, one faces the following optimal stopping problem. Given a filtered probability space $\left(\overline{\Omega}, \overline{\mathcal{F}}, \overline{\mathbb{F}}, \overline{\pr}\right)$, a finite time horizon $T$, an interest rate $r: [0, T] \to [0, \infty)$, and a sufficiently nice reward function $\psi: [0, \infty) \to [0, \infty)$, often of the form $\psi(y) = (e^y - K)^+$ or $\psi(y) = (K - e^y)^+$, one observes a family of processes $Y^{t, y}(\cdot)$ that satisfy 
\begin{equation*}
    Y^{t, y}(s) = y + \int_t^s \mu(v, Y^{t, y}(v)) \, dv + \int_t^s \sigma(v, Y^{t, y}(v)) \, d\overline{B}(v), \quad t \le s \le T.
\end{equation*}
Here, $(t, y)$ is a starting position of the process, $\mu(\cdot, \cdot)$ and $\sigma(\cdot, \cdot)$ are sufficiently regular functions, and $\overline{B}(\cdot)$ is an $\overline{\mathbb{F}}$--Brownian motion. For each starting position $(t, y)$, the goal is to find an $\overline{\mathbb{F}}$--stopping time $\tau$ such that $\overline{\pr}(0 \le \tau \le T - t) = 1$ and $\tau$ maximizes the expected payoff
\begin{equation}\label{expeted_payoff_american_options}
    \overline{\ex} \left[ 
        e^{-\int_t^{t + \tau} r(s) \, ds} \, \psi\big(Y^{t, y}(t + \tau)\big)
    \right].
\end{equation}
We denote the value function of this problem by
\begin{equation}\label{value_function_american_options}
    u_T(t, y) \coloneqq \sup_{0 \le \tau \le T-t} \overline{\ex} \left[ 
        e^{-\int_t^{t+\tau}
        r(s) \, ds} \, \psi\big(Y^{t, y}(\tau)\big)
    \right].
\end{equation}

\begin{remark}
    The above problem is a ``classical'' optimal stopping problem with a discounted reward for a diffusion process. Under some additional structural assumptions on the reward function $\psi(\cdot)$, such as convexity or the already mentioned special forms $\psi(y) = (e^y - K)^+$, $\psi(y) = (K - e^y)^+$, the problem falls into the classical framework for American options. In particular, for our purposes, we will have $\psi(y) = (e^y-1)^+$. 
    Moreover, the above notation purposefully aligns closely with the one in \cite{Lamberton}. Specifically, the value function \eqref{value_function_american_options} is similar to equation (2.2) of \cite{Lamberton}.
\end{remark}

We assume throughout this subsection that $\pi \in (0,1)$, and $c_i(T) > 0$, $i = 0, 1$, to fall into the setting of \cite{Lamberton}, which will be required as a starting point in some of our proofs. The ultimate generalization to $c_i(T)\geq 0, \, i = 0, 1$, will be obtained by limit-based arguments. We embed the problem \eqref{value_function_general_osp} into the above framework as follows. First, we recall that the process $\Pi^\pi(\cdot)$ satisfies the equation \eqref{cond_process_with_arbitrary_position} for any fixed starting position $\pi \in (0, 1)$. Therefore, for any $t \ge 0$, we have 
\begin{equation}\label{pi_as_stoch_exponentials}
    \Pi^\pi(t) = \pi \cdot \mathcal{E}\left(\int_0^t a \big(1 - \Pi^\pi(s)\big) \, dB(s) \right),\quad 
    1 - \Pi^\pi(t) = (1 - \pi) \cdot \mathcal{E}\left( - \int_0^t a \Pi^\pi(s) \, dB(s)\right),
\end{equation}
where $\mathcal{E}(\cdot)$ denotes the stochastic exponential. 
Then we define a new probability measure $\overline{\pr}(\cdot)$ on our  space $(\Omega, \mathcal{F})$ by 
\begin{equation}\label{new_measure}
    \overline{\pr}(A) 
    \coloneqq 
    \ex\left[
        \mathbf{1}_A \cdot \mathcal{E}\left(-\int_0^t a \Pi^\pi(s) \, dB(s) \right) 
    \right] \quad \text{for every } \, A \in \mathcal{F}(t), \, t \in [0, T],
\end{equation}
and obtain the following representation of the value function $V_T(\cdot, \cdot)$:
\begin{equation}\label{value_function_another_measure}
    \begin{split}
        V_T(t, \pi) &= \sup_{\tau \in \mathcal{T}_{T-t}} \ex\left[\Big(c_1(t+\tau)\Pi^\pi(\tau) - c_0(t+\tau)\big(1-\Pi^\pi(\tau)\big) \Big)^+ \right]
        \\
        &= (1-\pi) \sup_{\tau \in \mathcal{T}_{T-t}} \ex\left[
            \mathcal{E}\left( - \int_0^\tau a \Pi^\pi(s) \, dB(s)\right)
            c_0(t+\tau)\left(\frac{c_1(t+\tau)}{c_0(t+\tau)}\cdot \frac{\Pi^\pi(\tau)}{1-\Pi^\pi(\tau)} - 1 \right)^+ 
        \right]
        \\
        &= (1-\pi) \sup_{\tau \in \mathcal{T}_{T-t}} \overline{\ex}\left[
            c_0(t+\tau)\left(\frac{c_1(t+\tau)}{c_0(t+\tau)}\cdot \frac{\Pi^\pi(\tau)}{1-\Pi^\pi(\tau)} - 1 \right)^+ 
        \right].
    \end{split}
\end{equation}
Here, $\overline{\ex}[\, \cdot \,]$ denotes expectation with respect to the probability measure $\overline{\pr}$ in \eqref{new_measure}, the first equality is the definition of the value function, and the second equality follows from rearranging terms, using the representation \eqref{pi_as_stoch_exponentials}, and exploiting the fact that both $c_0(\cdot)$ and $\mathcal{E}(\cdot)$ are non-negative. 

To reduce the expression under the expectation $\overline{\ex}[\, \cdot \,]$ to the form of \eqref{expeted_payoff_american_options}, we introduce the following notation. We define the functions $\beta_i: [0, T] \to \mathbb{R}, \, i = 0, 1,$ by
\begin{equation}\label{definition_beta_functions}
    \beta_i(t) \coloneqq \frac{c_i'(t)}{c_i(t)}, \, \, \text{ so that for any } \, \, 0 \le t \le s \le T \, \, \text{ we get }  \, \, \exp\left(\int_t^s \beta_i(v) \, dv \right) = \frac{c_i(s)}{c_i(t)}.
\end{equation}
We also define a process $\overline{B} = \{\overline{B}(t), \, 0 \le t < \infty\}$ by
\begin{equation}\label{girsanov_BM_definition}
    \overline{B}(t) \coloneqq B(t) + a \int_0^t \Pi^\pi(s) \, ds, \quad 0 \le t < \infty,
\end{equation}
and note that $\overline{B}(\cdot)$ is a standard Brownian motion under the measure $\overline{\pr}(\cdot)$ on the strength of the Girsanov theorem (see, e.g., Chapter 3.5 in \cite{BMSC}). With this notation, the process inside the expectation $\overline{\ex}[\, \cdot \,]$ in \eqref{value_function_another_measure} can be written as
\begin{equation*}
    \begin{split}
        \frac{c_1(t+\tau)}{c_0(t+\tau)}\cdot \frac{\Pi^\pi(\tau)}{1-\Pi^\pi(\tau)}
        &= 
        \frac{\pi \, c_1(t)}{(1-\pi)\, c_0(t)}\, \exp\left(\int_t^{t + \tau} \Big(\beta_1(s) - \beta_0(s) \Big) \, ds \right) \mathcal{E}\left(a \, \overline{B}(\tau)\right)
        \\& = 
        \frac{\pi \, c_1(t)}{(1-\pi) \, c_0(t)}\, \exp\left(\int_t^{t + \tau} \left(\beta_1(s) - \beta_0(s) - \frac{a^2}{2}\right) ds + a\,  \overline{B}(\tau)\right).
    \end{split}
\end{equation*}
Here, the first equality follows from the identity on the right of \eqref{definition_beta_functions} for the fraction of discount functions, the representation \eqref{pi_as_stoch_exponentials}, and the definition \eqref{girsanov_BM_definition} for the fraction of the a posteriori processes. The second equality follows from the definition of the stochastic exponential and the fact that $\overline{B}(\cdot)$ is a Brownian motion under $\overline{\pr}(\cdot)$.

We deduce that the value function $V_T(\cdot, \cdot)$ can be written in a form similar to \eqref{value_function_american_options} as
\begin{equation}\label{value_function_sup_american_option}
    V_T(t,\pi) = (1-\pi) \, c_0(t) \sup_{\tau \in \mathcal{T}_{T - t}} \overline{\ex}\left[
        e^{\int_t^{t + \tau} \beta_0(s) \, ds} \, \psi\big(Y^{t, y}(t + \tau)\big)
    \right],
\end{equation}
where $\psi: \mathbb{R} \to [0, 1]$ and $y \coloneqq y(t, \pi)$ are defined by
\begin{equation}\label{change_of_initial_position}
    \psi(y) \coloneqq (e^y - 1)^+ 
    \quad \text{ and } \quad
    y(t, \pi) \coloneqq \log \left(\frac{\pi \, c_1(t)}{(1-\pi) \, c_0(t)} \right),
\end{equation}
and we have
\begin{equation*}
        Y^{t,y}(s) \coloneqq y + \int_t^s \left(\beta_1(v) - \beta_0(v) - \frac{a^2}{2} \right) dv + \int_t^s a \, d\overline{B}(v), \quad t \le s \le T.
\end{equation*}
Crucially, we have the following equality between the value functions \eqref{value_function_general_osp} and \eqref{value_function_american_options}:
\begin{equation}\label{american_call_value_fn}
     V_T(t, \pi) = (1-\pi) \, c_0(t) \, u_T\big(t, y(t,\pi)\big).
\end{equation}

\begin{remark} \label{American_put_remark}
    Equation \eqref{american_call_value_fn} shows the relationship between $V_T(\cdot,\cdot)$ and a value function resembling that of an American call option. It is clear that instead of factoring out the term $c_0(t+\tau)\,(1-\Pi^\pi(\tau)$) in the above calculations, one could have factored out $c_1(t+\tau) \, \Pi^\pi(\tau)$ to obtain a relationship similar to that of an American put option. In particular, $V_T(t,\pi) = \pi \; c_1(t) \; \widetilde u_T(t, \widetilde y(t,\pi)),$
    where 
    \begin{align*}
        \widetilde u_T(t, \widetilde{y}) &\coloneqq \sup_{\tau \in \mathcal{T}_{T-t}} \widetilde{\ex} \left[e^{\int_t^{t+\tau}\beta_1(s)ds} \left(1 - e^{\widetilde Y^{t,\widetilde y}(t+\tau)}\right)^+ \right],
        \\
        \widetilde Y^{t, \widetilde y}(s) &\coloneqq \widetilde y + \int_t^s \left(\beta_0(v) - \beta_1(v) - \frac{a^2}{2} \right) + \int_t^s a \, d \widetilde{B}(v), \quad t \leq s \leq T,
        \\
        \widetilde y(t,\pi) &\coloneqq -y(t,\pi) = \log \left(\frac{(1-\pi) \; c_0(t)}{\pi\; c_1(t)}\right),
    \end{align*}
    and $\widetilde\ex[\, \cdot \,]$ denotes expectation with respect to a probability measure $\widetilde\pr(\cdot)$, under which $\widetilde B(\cdot)$ is standard Brownian motion.
\end{remark}

\section{Analysis of the Optimal Stopping Problem}\label{sec_main_results}

We now turn to the analysis of the problem \eqref{value_function_general_osp}. Our approach follows a structure similar to the standard methods in the field of optimal stopping. However, the general form of the discount functions 
$c_i(\cdot), \, i = 0, 1,$ introduces additional complexity and requires some delicate arguments. To provide clarity and context, we begin by outlining the structure of this section and the next.
Our main results consist of two parts: those about the value function $V_T(\cdot, \cdot)$, and those about the optimal stopping boundary. We cover the former in Section \ref{sec_main_results} and the latter in Section \ref{sec_boundary}. 

In Subsection \ref{subsec_preliminaries_main_res}, we start analyzing the problem \eqref{value_function_general_osp}, show the existence of a boundary that separates the so-called stopping and continuation regions in Proposition \ref{prop_structure_of_regions}, and state our main results in Theorem \ref{main_result}. Subsection \ref{subsec_properties_value_function} covers results about the value function; in particular
\begin{enumerate}
[topsep=-5pt, itemsep=-1ex, partopsep=1ex, parsep=2ex]
    \item Proposition \ref{prop_properties_of_value_func} shows that the value function is uniformly continuous in the interior of its domain, as well as states some of its regularity and monotonicity properties,
    \item Proposition \ref{prop_boundary_problem_value_func} confirms that the value function is indeed a solution to an associated free-boundary problem,
    \item Proposition \ref{prop_C1_value_func} proves the smooth-fit property, 
    \item Proposition \ref{ito_for_value_function} verifies the applicability of Itô's Lemma to $V_T(\cdot, \cdot)$, while
    \item Proposition \ref{prop_uniqueness_free_boundary}  establishes the uniqueness of the solution to the associated free-boundary problem in a suitable class of functions.
\end{enumerate}
Section \ref{sec_boundary} contains results about the optimal stopping boundary. Subsection \ref{subsec_properties_of_the_boundary} consists of:
\begin{enumerate}
[topsep=0pt, itemsep=-1ex, partopsep=1ex, parsep=2ex]
    \item Proposition \ref{prop_optimal_stoppinGime}, which shows that the optimal stopping time is the first hitting time of the corresponding boundary;
    \item A technical Lemma \ref{boundary_measure_zero}, guaranteeing that the boundary of the optimal stopping region has Lebesgue measure zero; and the key
    \item Proposition \ref{prop_boundary_equation}, which characterizes the optimal stopping boundary in terms of a particular integral equation.
\end{enumerate}
Subsection \ref{subsec_boundary_monotonicity} studies sufficient conditions for the monotonicity of the stopping boundary under an appropriate transformation as well as its continuity, while Subsection \ref{subsec_examples} concludes with examples and numerical results.

\subsection{Preliminaries and the Main Results}\label{subsec_preliminaries_main_res}

Since $\tau \equiv 0$ is an admissible stopping time, the value function $V_T(\cdot, \cdot)$ of \eqref{value_function_general_osp} dominates the gain function $G(\cdot, \cdot)$ of \eqref{gain_function}; i.e., for all $(t, \pi) \in [0, T] \times [0, 1]$, we have $V_T(t, \pi) \ge G(t, \pi)$.
Therefore, we introduce the so-called continuation region 
\begin{equation}\label{def_continuation_region}
    \mathcal{C}_T \coloneqq \{(t, \pi) \in[0, T] \times[0,1]: V_T(t, \pi) > G(t, \pi)\}
\end{equation}
and the stopping region     
\begin{equation}\label{def_stopping_region}
    \mathcal{S}_T \coloneqq \{(t, \pi) \in[0, T] \times[0,1]: V_T(t, \pi) = G(t, \pi)\},
\end{equation}
which are separated by some function $b_T: [0, T] \to [0, 1]$ as is established in the following proposition, the proof of which is given in Appendix \ref{subsec_app_structure_of_regions}.

\begin{Prop}\label{prop_structure_of_regions}
    For any $T \in [0 , \infty]$, there exists a function $b_T: [0, T] \to [0, 1]$, such that the continuation and stopping regions \eqref{def_continuation_region}, \eqref{def_stopping_region} are represented as 
    \begin{equation}\label{regions_via_boundary}
        \begin{split}
            \mathcal{C}_T &= \{(t, \pi) \in [0, T] \times (0, 1): \pi < b_T(t)\}, \\
            \mathcal{S}_T &= \{(t, \pi) \in [0, T] \times [0, 1]: \pi \ge b_T(t) \text{ or } \pi = 0\},
        \end{split}
    \end{equation}
    where 
    \begin{equation}\label{boundary_definition}
        b_T(t) \coloneqq \inf \{ \pi \in (0, 1): V_T(t, \pi) = G(t, \pi) \}, \quad 0 \le t \le T.
    \end{equation}
\end{Prop}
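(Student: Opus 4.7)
The plan is to show that for each $t$, the slice $\{\pi\in[0,1]: V_T(t,\pi)=G(t,\pi)\}$ has the form $\{0\}\cup[b_T(t),1]$ with $b_T(t)$ defined in \eqref{boundary_definition}, from which \eqref{regions_via_boundary} follows directly. The argument rests on three facts: convexity of $V_T$ in $\pi$; the boundary values $V_T(t,0)=G(t,0)=0$ and $V_T(t,1)=G(t,1)=c_1(t)$; and strict positivity $V_T(t,\pi)>0$ on $[0,T)\times(0,1)$.

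For convexity, I would use the equivalent probability $\overline{\pr}$ from \eqref{new_measure}, under which $\overline{B}$ is a Brownian motion and the ratio $\Pi^\pi/(1-\Pi^\pi)$ equals $\tfrac{\pi}{1-\pi}\cdot M$ with $M:=\mathcal{E}(a\overline{B})$ independent of $\pi$. The calculation behind \eqref{value_function_another_measure} then rearranges to
\[
V_T(t,\pi)=\sup_{\tau\in\mathcal{T}_{T-t}}\overline{\ex}\Big[\big(c_1(t+\tau)\,\pi\,M_\tau-c_0(t+\tau)(1-\pi)\big)^+\Big].
\]
The integrand is the positive part of an affine (hence convex) function of $\pi$ and is therefore convex in $\pi$; expectation and supremum preserve convexity, so $V_T(t,\cdot)$ is convex on $[0,1]$.

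For the remaining two facts: the diffusion coefficient $a\pi(1-\pi)$ in \eqref{cond_process_with_arbitrary_position} vanishes at $\pi\in\{0,1\}$, so $\Pi^{0}(\cdot)\equiv 0$ and $\Pi^{1}(\cdot)\equiv 1$, yielding $V_T(t,0)=\sup_\tau\ex[(-c_0(t+\tau))^+]=0$ and $V_T(t,1)=\sup_\tau\ex[c_1(t+\tau)]=c_1(t)$ (using $c_0\ge 0$ and the monotonicity of $c_1$ from \ref{reg_assm_3}). For strict positivity with $t<T$ and $\pi\in(0,1)$, fix $T'\in(t,T)$ and choose $\pi'\in(\pi,1)$ with $\pi'/(1-\pi')>\sup_{s\in[t,T']}c_0(s)/c_1(s)$ (finite by \ref{reg_assm_1}--\ref{reg_assm_2}); then $\tau:=\inf\{s\ge 0:\Pi^\pi(s)\ge\pi'\}\wedge(T'-t)$ satisfies $\pr(\Pi^\pi(\tau)\ge\pi')>0$, since $\Pi^\pi$ is a continuous $(0,1)$-valued martingale with strictly positive quadratic variation rate, and on that event the integrand in \eqref{expected_reward_general_pi} is strictly positive, giving $V_T(t,\pi)>0$.

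Combining: writing $\pi_0(t):=c_0(t)/(c_0(t)+c_1(t))$, the gain $G(t,\cdot)$ is zero on $[0,\pi_0(t)]$ and linear on $[\pi_0(t),1]$. On $[0,\pi_0(t)]$ strict positivity gives $V_T-G=V_T>0$ for $\pi\in(0,\pi_0(t)]$, so the zero set there is $\{0\}$. On $[\pi_0(t),1]$, $V_T-G$ is convex (convex minus linear) and non-negative and vanishes at $\pi=1$; the zero set of a convex non-negative function on an interval is itself an interval, so this zero set is $[b_T(t),1]$ with $b_T(t)\in[\pi_0(t),1]$. Thus the $t$-slice of $\mathcal{S}_T$ equals $\{0\}\cup[b_T(t),1]$ for $t<T$; at $t=T<\infty$ only $\tau=0$ is admissible so $V_T(T,\cdot)=G(T,\cdot)$ and $b_T(T)=0$ by the infimum convention. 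The main obstacle is the convexity step: showing that the $\pi$-dependent measure change \eqref{new_measure} nonetheless produces a clean convex representation of $V_T$, which hinges on $\overline{B}$ having $\pi$-independent law under $\overline{\pr}$ — a standard Girsanov-plus-filtering calculation.
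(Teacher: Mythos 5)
Your proof is correct and follows a genuinely different route from the paper's. The paper's appendix argument does not invoke convexity at all: it works with the auxiliary linear gain $\widetilde{G}_T(t,\pi)=g(t,\pi)\mathbf{1}_{\{t<T\}}$ (and the equivalence $\widetilde V_T=V_T$ from Proposition \ref{prop_gain_functions_equivalnce}), and shows directly that if $(t,\pi)$ lies in the stopping set then so does $(t,p)$ for every $p\ge\pi$, via the supermartingale property of $\big(\Pi^p(s)-\Pi^\pi(s)\big)\big(c_0(t+s)+c_1(t+s)\big)\mathbf{1}_{\{t+s<T\}}$ and the comparison principle. You instead prove convexity of $\pi\mapsto V_T(t,\pi)$ via the Girsanov representation, and pin down the shape of the slices with the geometric fact that a non-negative convex function vanishing at the right endpoint of an interval has an interval for its zero set; strict positivity then excludes any zeros on the flat part of $G$. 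What each buys: the paper's comparison avoids needing either convexity or the measure change at this stage (it only needs continuity from Proposition \ref{prop_properties_of_value_func}, proved independently), whereas your route gives a self-contained convexity proof that is arguably cleaner than the paper's own convexity argument in Proposition \ref{prop_properties_of_value_func}\ref{value_func_convexity} (which uses the Bernoulli representation and the functions $\mathfrak{g}_\tau$). Your direct calculation, multiplying the integrand by $(1-\pi)/(1-\Pi^\pi(\tau))$ and pushing it inside the positive part, also avoids the intermediate division by $c_0$ appearing in \eqref{value_function_another_measure}, so it does not need $c_0(T)>0$.

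Two small technicalities you should be explicit about, both readily fixable. First, the change of measure \eqref{new_measure} makes $\overline{\pr}$ depend on $\pi$, so convexity of the supremum requires, as you note, that the law of $\overline{B}$ under $\overline{\pr}$ is $\pi$-free \emph{and} that the family of admissible stopping times is $\pi$-free; the latter holds because $\mathcal{F}^B=\mathcal{F}^{\overline B}$ (each of $B,\overline B,\Pi^\pi$ is a pathwise measurable functional of the other via Lipschitz SDEs), but this deserves a sentence. Second, for $T=\infty$ the Radon--Nikodym density $Z_\infty=(1-\Pi^\pi(\infty))/(1-\pi)$ vanishes on $\{\Pi^\pi(\infty)=1\}$, which has $\pr$-probability $\pi>0$, so the identity $\ex[X]=\overline\ex[X/Z_\tau]$ fails at $\tau=\infty$; one should first reduce to bounded $\tau$ by dominated convergence (the gain is bounded by $c_0(0)+c_1(0)$ and $J_T(t,\pi,\tau\wedge n)\to J_T(t,\pi,\tau)$), and only then apply the representation. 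Neither issue affects the conclusion.
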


We are now ready to formulate our main result.

\begin{Th}\label{main_result}
    For any $T \in [0, \infty]$, the value function $V_T(\cdot, \cdot)$ of the optimal stopping problem \eqref{value_function_general_osp} and the function $b_T: [0, T] \to [0, 1]$ of \eqref{boundary_definition}, solve the free-boundary problem
    \begin{equation}\label{value_function_boundary_problem}
        \begin{cases}
            \partial_t V_T(t, \pi) + \dfrac{a^2}{2}\pi^2 (1 - \pi)^2 \, \partial_{\pi\pi} V_T(t, \pi) = 0, & 0 < \pi < b_T(t), \, t \in [0, T) \\
            V_T(t, \pi) = G(t, \pi), & \pi \ge b_T(t) \text{ or } \pi = 0, \, t \in [0, T), \\
            V_T(T, \pi) = G(T, \pi), & \pi \in [0, 1].
        \end{cases}
    \end{equation}
    In particular, $V_T(\cdot, \cdot)$ is of class $C^{1, 2}$
    away from $b_T(\cdot)$, and the so-called ``smooth-fit'' condition holds in the sense that $V_T(t, \cdot)$ is of class $C^1\big((0, 1)\big)$ for any $t \in [0, T)$.
    
    The optimal stopping time for the problem \eqref{value_function_general_osp} with a starting position $(t, \pi)$ is given by
    \begin{equation}\label{optimal_stoppinGime}
        \tau^*_T \coloneqq \inf\{s \ge 0: \Pi^\pi(s) \ge b_T(t + s)\},
    \end{equation}
    and the function $b_T(\cdot)$ of \eqref{boundary_definition} satisfies the integral equation
    \begin{equation}\label{integral_equation_for_boundary}
        \begin{split}
            b_T(t) \cdot \big(c_1(t) &+c_0(t)\big)- c_0(t)
            =
            \ex\left[G\left(T, \, \Pi^{b_T(t)}(T-t)\right)\right] 
            \\&- 
            \int_{0}^{T-t}
            \Big(c_1'(t+u) + c_0'(t+u)\Big) 
            \cdot
            \ex \left[
                \Pi^{b_T(t)}(u) 
                \cdot 
                \mathbf{1}_{\{\Pi^{b_T(t)}(u) \ge b_T(t + u)\}}
            \right]
            \\&\quad \quad \quad
            - 
            c_0'(t+u) 
            \cdot 
            \pr \left(
                \Pi^{b_T(t)}(u) \ge b_T(t + u)
            \right)
            du, \quad 0 \le t < T.
        \end{split}
    \end{equation}
\end{Th}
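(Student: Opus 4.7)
The theorem collects into one statement results that are established piece by piece in the rest of Section \ref{sec_main_results} and Section \ref{sec_boundary}; I would prove it by stitching those pieces together. Proposition \ref{prop_structure_of_regions} already provides the separating boundary $b_T$ and the representation of the two regions. That $(V_T, b_T)$ satisfies the PDE in \eqref{value_function_boundary_problem} on $\mathcal{C}_T$, together with the boundary and terminal conditions, is the content of Proposition \ref{prop_boundary_problem_value_func}; interior $C^{1,2}$ regularity follows from classical parabolic theory for the operator $\partial_t + \tfrac{a^2}{2}\pi^2(1-\pi)^2\partial_{\pi\pi}$, which is non-degenerate on compact subsets of $(0,1)$. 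The smooth-fit assertion $V_T(t,\cdot) \in C^1((0,1))$ for $t \in [0, T)$ is Proposition \ref{prop_C1_value_func}, whose natural route is to transfer the American-call representation \eqref{american_call_value_fn} to the framework of Jaillet--Lamberton--Lapeyre \cite{Lamberton}, where the analogous smooth-fit is classical.

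For the optimal stopping time \eqref{optimal_stoppinGime}, I would run the standard verification argument. Continuity of $\Pi^\pi(\cdot)$ together with the closedness of $\mathcal{S}_T$ makes $\tau^*_T$ an $\mathbb{F}$-stopping time, and admissibility $\tau^*_T \le T-t$ a.s.\ is enforced by the terminal condition $V_T(T,\cdot) = G(T,\cdot)$, which forces $b_T(T) = 0$. Applying the extended Itô formula of Proposition \ref{ito_for_value_function} to $V_T\bigl(t + s\wedge \tau^*_T, \Pi^\pi(s \wedge \tau^*_T)\bigr)$ and using the PDE on $\mathcal{C}_T$ shows that this stopped process is a martingale; by continuity of $V_T$ it equals $G$ at $\tau^*_T$, so taking expectations gives $V_T(t,\pi) = \ex\bigl[G(t+\tau^*_T, \Pi^\pi(\tau^*_T))\bigr]$. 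Combined with $V_T(t,\pi) \ge \ex[G(t+\tau, \Pi^\pi(\tau))]$ for arbitrary admissible $\tau$, this confirms optimality.

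The integral equation \eqref{integral_equation_for_boundary} follows from a second application of Proposition \ref{ito_for_value_function}, now to $V_T\bigl(t+s, \Pi^{b_T(t)}(s)\bigr)$ over the full interval $s \in [0, T-t]$, with starting point $\pi = b_T(t)$ on the boundary. On the continuation region the integrand vanishes by the PDE. On the stopping region $V_T = G = (c_1+c_0)\pi - c_0$ is affine in $\pi$ (strictly, once one observes that $b_T \ge c_0/(c_0+c_1)$, so $G > 0$ on the interior of $\mathcal{S}_T$), so $\partial_{\pi\pi} V_T = 0$ and $\partial_t V_T = (c_1'(\cdot) + c_0'(\cdot))\pi - c_0'(\cdot)$ there. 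Taking expectations cancels the martingale increment, substituting $V_T(t, b_T(t)) = b_T(t)(c_1(t)+c_0(t)) - c_0(t)$ on the left and $V_T(T, \cdot) = G(T, \cdot)$ on the right, and rearranging produces exactly \eqref{integral_equation_for_boundary}.

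The main technical obstacle is the justification of Itô's formula in Proposition \ref{ito_for_value_function}, since $V_T$ is only guaranteed to be $C^{1,2}$ on $\mathcal{C}_T$ and globally $C^1$ in $\pi$, with no \emph{a priori} control of $\partial_t V_T$ across $b_T$. The standard cure is mollification combined with dominated convergence, which relies critically on Lemma \ref{boundary_measure_zero} to ensure the graph of $b_T$ contributes nothing to the space-time integrals that appear. In the infinite-horizon case $T = \infty$, one additionally has to control the terminal term $\ex[G(T, \Pi^{b_T(t)}(T-t))]$ and verify the necessary uniform integrability; I would handle these by uniform bounds on $V_T$ inherited from $G$ and the boundedness of $c_0, c_1$, together with a limit argument from the finite-horizon problems.
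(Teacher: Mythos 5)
Your proposal follows the paper's own organization: Theorem \ref{main_result} is a compilation of Propositions \ref{prop_boundary_problem_value_func}, \ref{prop_C1_value_func}, \ref{prop_optimal_stoppinGime}, and \ref{prop_boundary_equation}, and you correctly identify each ingredient, including the reliance on Lemma \ref{boundary_measure_zero} to legitimize Proposition \ref{ito_for_value_function} and the $T=\infty$ limiting issues. The one genuine divergence is how you establish optimality of $\tau^*_T$: you run a verification argument (Itô on the stopped value process, martingale property on $\mathcal{C}_T$ from the PDE, supermartingale property overall), whereas the paper's Proposition \ref{prop_optimal_stoppinGime} appeals directly to Corollary 2.9 of \cite{PesShi06} from the general theory of optimal stopping, which needs only the closedness of $\mathcal{S}_T$ (from the continuity of $V_T$), the strong Markov property, and the boundedness of the gain. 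The verification route is valid once Proposition \ref{ito_for_value_function} is in hand, but the paper's choice is more economical and avoids threading through the mollification/Itô machinery and its finite-horizon $C^2$/limit-argument caveats; it also makes the dependency chain cleaner since Proposition \ref{prop_optimal_stoppinGime} then does not rest on Lemma \ref{boundary_measure_zero}. The remainder of your derivation of the integral equation — expectation of \eqref{ito_equation_for_value_function} started from $\pi = b_T(t)$, cancellation of the martingale term, affine form of $G$ on $\mathcal{S}_T$ after noting $b_T(t)\geq c_0(t)/(c_0(t)+c_1(t))$ — matches the paper's Proposition \ref{prop_boundary_equation} exactly.
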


\begin{remark}
    In addition to the statements of Theorem \ref{main_result}, we also establish that the pair $(V_T(\cdot, \cdot), b_T(\cdot))$ is the unique solution to the free-boundary problem \eqref{value_function_boundary_problem} in a suitable class of functions. Since the formulation of the uniqueness requires several technical details, it is relegated to Proposition \ref{prop_uniqueness_free_boundary}.
\end{remark}

We establish the properties of the value function $V_T(\cdot, \cdot)$ in the next subsection, and then the properties of the boundary $b_T(\cdot)$ in Section \ref{sec_boundary}. As mentioned in the introduction to this section, for clarity of exposition we divide our arguments into several propositions and lemmas. This way, the statements of Theorem \ref{main_result} follow from Propositions \ref{prop_boundary_problem_value_func}, \ref{prop_C1_value_func}, \ref{prop_optimal_stoppinGime}, and \ref{prop_boundary_equation}.

\subsection{Properties of the Value Function}\label{subsec_properties_value_function}

We need to show that the pair $(V_T(\cdot, \cdot), b_T(\cdot))$ solves the system \eqref{value_function_boundary_problem} and $V_T(\cdot, \cdot)$ satisfies the smooth-fit principle. This requires some properties of $V_T(\cdot, \cdot)$, including its joint continuity. 

\begin{Prop}\label{prop_properties_of_value_func}
    For any $T \in [0, \infty]$,
    \begin{enumerate}[label={(\arabic*)}, topsep=-5pt,itemsep=-1ex,partopsep=1ex,parsep=1ex]
        \item The value function $V_T(\cdot,\cdot)$ of \eqref{value_function_general_osp} is uniformly continuous on the set $[0,T] \times [0,1]$. \label{value_func_continuity}
    \end{enumerate}
    Furthermore, for any fixed $t \in [0, T]$,
    \begin{enumerate}[label={(\arabic*)}, topsep=-5pt,itemsep=-1ex,partopsep=1ex,parsep=1ex, resume]
        \item The mapping $\pi \mapsto V_T(t, \pi)$ is convex.  \label{value_func_convexity}
        \item The mapping $\pi \mapsto V_T(t, \pi)$ is nondecreasing. \label{value_func_nondec}
        \item The mapping $\pi \mapsto V_T(t, \pi)$ is Lipschitz continuous with the constant $c_1(t)+c_0(t)$.\label{value_func_Lipschitz_pi}
    \end{enumerate}
\end{Prop}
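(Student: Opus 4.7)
The plan is to prove the four parts in the order \ref{value_func_nondec}, \ref{value_func_Lipschitz_pi}, \ref{value_func_convexity}, \ref{value_func_continuity}, as the continuity argument will lean on the Lipschitz estimate of part \ref{value_func_Lipschitz_pi}. All four parts rely on manipulating the sup-representation of $V_T(\cdot,\cdot)$ in \eqref{value_function_general_osp}.

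For parts \ref{value_func_nondec} and \ref{value_func_Lipschitz_pi}, I would couple $\Pi^{\pi_1}(\cdot)$ and $\Pi^{\pi_2}(\cdot)$ by driving them with the same Brownian motion $B(\cdot)$ from \eqref{cond_process_with_arbitrary_position}. Their difference $\Delta(\cdot) \coloneqq \Pi^{\pi_2}(\cdot) - \Pi^{\pi_1}(\cdot)$ then satisfies the linear SDE $d\Delta(t) = a\,\Delta(t)\big(1-\Pi^{\pi_1}(t)-\Pi^{\pi_2}(t)\big)dB(t)$, so it equals $(\pi_2-\pi_1)\,\mathcal{E}\big(\int_0^\cdot a(1-\Pi^{\pi_1}(s)-\Pi^{\pi_2}(s))\,dB(s)\big)(t)$, which is nonnegative whenever $\pi_2 \ge \pi_1$. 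Since $\Delta(\cdot)$ is bounded, it is a true martingale, and optional sampling gives $\ex|\Delta(\tau)| = |\pi_2 - \pi_1|$ for every $\tau \in \mathcal{T}_T$, including $T = \infty$. Part \ref{value_func_nondec} then follows from the monotonicity of $\pi \mapsto G(s,\pi)$, which holds because the $c_i(\cdot)$ are nonnegative by \ref{reg_assm_1}. For part \ref{value_func_Lipschitz_pi} I would combine the elementary identity $|g(s,x)-g(s,y)| = (c_1(s)+c_0(s))|x-y|$ (which transfers to $G = g^+$) with the monotonicity $c_i(t+\tau) \le c_i(t)$ from \ref{reg_assm_3}; together, these yield the advertised Lipschitz constant $c_1(t)+c_0(t)$, uniformly in the stopping time $\tau$.

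For the convexity in part \ref{value_func_convexity}, I would pass to a reference measure $Q$ under which the observation process $X(\cdot)$ of \eqref{observation_process_model_sec} is a standard Brownian motion. Bayes' rule then gives the $\pi$-independent representation $\Pi^\pi(t) = \pi L^1(t)/\big(\pi L^1(t) + (1-\pi)L^0(t)\big)$, with $L^i(t) \coloneqq \exp\big(d_i X(t) - d_i^2 t/2\big)$, $d_0 = b$, $d_1 = a+b$ -- the same measure change that underlies Subsection \ref{subsec_american_options_formulation}. A Radon-Nikodym rearrangement (compare with \eqref{value_function_another_measure}) yields
\[
    V_T(t,\pi) = \sup_{\tau \in \mathcal{T}_{T-t}} \ex^Q\!\Big[\big(c_1(t+\tau)\,\pi\, L^1(\tau) - c_0(t+\tau)(1-\pi)\, L^0(\tau)\big)^+\Big].
\]
For each $\tau$ and each sample path of $L^0(\cdot), L^1(\cdot)$, the integrand is the positive part of a function affine in $\pi$, hence convex in $\pi$; convexity is preserved by the expectation and by the supremum.

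For the joint uniform continuity in part \ref{value_func_continuity}, the Lipschitz bound above already yields continuity in $\pi$, uniformly in $t$. Continuity in $t$ splits into two directions. The bound $V_T(t_2,\pi) - V_T(t_1,\pi) \le \omega(|t_2-t_1|)$ for $t_1 < t_2$ is direct: any $\tau \in \mathcal{T}_{T-t_2}$ belongs to $\mathcal{T}_{T-t_1}$, and uniform continuity of the $c_i(\cdot)$ on the compact interval $[0,T]$ (or $[0,\infty]$ under one-point compactification, by \ref{reg_assm_2}) produces $|G(t_2+\tau,\cdot) - G(t_1+\tau,\cdot)| \le \omega_{c_1}(|t_2-t_1|) + \omega_{c_0}(|t_2-t_1|)$. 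The main obstacle is the reverse direction: a $\tau \in \mathcal{T}_{T-t_1}$ may exceed $T-t_2$, so I would truncate to $\tau' \coloneqq \tau \wedge (T-t_2) \in \mathcal{T}_{T-t_2}$ and control the discrepancy on $\{\tau > T-t_2\}$ via a Doob $L^2$-estimate for the bounded martingale $\Pi^\pi(\cdot)$ -- whose quadratic variation rate is bounded by $a^2/16$ -- bounding $\ex|\Pi^\pi(\tau)-\Pi^\pi(T-t_2)|$ uniformly in $\tau$ by a constant multiple of $\sqrt{t_2-t_1}$. When $T=\infty$ this truncation is unnecessary, since $\mathcal{T}_{T-t_1} = \mathcal{T}_{T-t_2}$; only the continuity of the $c_i(\cdot)$ at $\infty$ from \ref{reg_assm_2} is needed. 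Compactness of the domain finally upgrades the resulting pointwise continuity to uniform continuity.
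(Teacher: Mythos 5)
Your proof is correct, and it takes a genuinely different route from the paper on the key technical steps, most notably for convexity.

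\textbf{Convexity (part 2).} The paper proves convexity by reverting to the statistical setup of Subsection \ref{subsubsec_reversion_back}, applying the tower property with respect to $\theta^\pi$, and then -- crucially -- invoking the representation of stopping times as measurable functionals $\tau = f_\tau(\theta^\pi, W^\pi)$ (Remark \ref{naturalfiltrationrelaxationremark}), which yields a $\pi$-independent pair $\mathfrak g_\tau(0),\mathfrak g_\tau(1)$ and an affine-in-$\pi$ objective. Your measure-change argument sidesteps that measurability subtlety entirely: under the reference measure $Q$ (zero-drift Wiener measure on the $X$-path space), the driving laws $L^0, L^1$ and the admissible stopping times are $\pi$-independent, the likelihood ratio $\pi L^1 + (1-\pi)L^0$ absorbs the denominator in $\Pi^\pi$, and the resulting integrand $\big(c_1 \pi L^1 - c_0 (1-\pi)L^0\big)^+$ is manifestly convex in $\pi$. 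This is the cleaner of the two arguments, and it is a legitimate shortcut. Two small caveats: first, the reference measure you use is \emph{not} the measure $\overline{\pr}$ of Subsection \ref{subsec_american_options_formulation} (that one is conditioned on $\theta=0$, i.e.\ tilts by $\mathcal{E}(-\int a\Pi^\pi\,dB)$; yours makes $X$ driftless); the attribution to \eqref{value_function_another_measure} should be dropped or adjusted. Second, for $T=\infty$ the displayed $Q$-representation silently loses the contribution from $\{\tau=\infty\}$, since $L^i(\infty)=0$ $Q$-a.s. while $\Pi^\pi(\infty)=\mathbf{1}_{\{\theta=1\}}$ under $\pr^\pi$; this missing term equals $c_1(\infty)\,\pi\,\pr(\tau=\infty\mid\theta=1)$, which is still affine in $\pi$, so convexity survives -- but the cleanest fix is the one the paper uses, namely to establish convexity for $T<\infty$ and pass to $T=\infty$ via the monotone limit of Lemma \ref{lem_value_incr_monotonically_T_upward}.

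\textbf{Lipschitz and monotonicity (parts 3 and 4).} The paper again reverts to the $\theta^\pi$-representation and bounds $\mathfrak g_{\tau}(1)-\mathfrak g_\tau(0)$ by $c_1(t)+c_0(t)$. You instead couple $\Pi^{\pi_1}, \Pi^{\pi_2}$ through the common driver $B(\cdot)$, derive the linear SDE for $\Delta = \Pi^{\pi_2}-\Pi^{\pi_1}$, read off nonnegativity from its stochastic-exponential form, and combine $\ex[\Delta(\tau)]=\pi_2-\pi_1$ (optional sampling for a bounded martingale) with the pointwise Lipschitz estimate for $G$ and monotonicity of $c_i$. This is a direct and elementary replacement for the paper's tower-property route and yields the sharp constant $c_1(t)+c_0(t)$.

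\textbf{Joint continuity (part 1).} The paper first proves a spatial Lipschitz estimate with the cruder constant $(c_0(0)+c_1(0))e^{T^2/2}$ via Gronwall, and handles time continuity through a qualitative modulus $f(\delta)$ argument using the pathwise joint continuity of $(t,\pi)\mapsto\Pi^\pi(t)$ and dominated convergence. You instead feed in the sharp Lipschitz constant of part (4) for the spatial direction, and for the temporal direction give an explicit, quantitative Doob $L^2$ bound $\ex|\Pi^\pi(\tau)-\Pi^\pi(\tau\wedge(T-t_2))|\lesssim a\sqrt{t_2-t_1}$. Your modulus is explicit where the paper's is not, which is a minor improvement; your observation that the truncation step is unnecessary when $T=\infty$ (because $\mathcal{T}_{T-t}$ does not depend on $t$) matches the paper's Case~\#1 exactly.

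Overall this is a valid alternative proof, trading the paper's tower-property/stopping-time-representation machinery for a likelihood-ratio measure change and pathwise coupling; both buy the same conclusions, but your route is somewhat more elementary and avoids Remark \ref{naturalfiltrationrelaxationremark}.
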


The proof is located in Appendix \ref{subsec_app_prop_properties_value_func}.

\begin{remark}
    We note that the convexity of the value function given by \ref{value_func_convexity}, along with the PDE in \eqref{value_function_boundary_problem}, show immediately that the function $V_T(\cdot, \pi)$ is decreasing in the continuation region $\mathcal{C}_T$ for each $\pi \in (0, 1)$.
\end{remark}

We are now ready to prove our main results about the value function $V_T(\cdot, \cdot)$. 

\begin{Prop}\label{prop_boundary_problem_value_func}
    For any $T \in [0, \infty]$, the value function \eqref{value_function_general_osp} solves the free-boundary value problem \eqref{value_function_boundary_problem}. In particular, the function $V_T(\cdot, \cdot)$ is $C^{1, 2}$ in $\mathcal{C}_T$.
\end{Prop}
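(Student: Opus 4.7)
The plan is to split the argument into two parts: verifying the boundary data in \eqref{value_function_boundary_problem} and establishing both the parabolic PDE in $\mathcal{C}_T$ and the $C^{1,2}$ regularity of $V_T$ there. The first part is immediate from definitions: the equality $V_T(t,\pi) = G(t,\pi)$ on $\mathcal{S}_T$ is the content of \eqref{def_stopping_region}, and the terminal condition $V_T(T,\pi) = G(T,\pi)$ holds because at $t = T$ the only admissible stopping time is $\tau \equiv 0$, with payoff $G(T,\pi)$.

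The substantive work lies in the PDE and the $C^{1,2}$ regularity inside $\mathcal{C}_T$. My strategy is to exploit the American-option reformulation developed in Subsection \ref{subsec_american_options_formulation}. Under the temporary assumption $c_0(T), c_1(T) > 0$, the identity \eqref{american_call_value_fn} factors the value function as $V_T(t,\pi) = (1-\pi)\, c_0(t)\, u_T(t, y(t,\pi))$, where the function $u_T$ in \eqref{value_function_american_options} fits squarely into the framework of Jaillet, Lamberton, and Lapeyre \cite{Lamberton}. Their results yield that $u_T$ is of class $C^{1,2}$ on its continuation region and satisfies there the associated parabolic PDE. Since $y(t,\pi) = \log\bigl(\pi\, c_1(t)/((1-\pi)\, c_0(t))\bigr)$ is a $C^2$-diffeomorphism on $(0,1) \times [0,T)$ under \ref{reg_assm_1}--\ref{reg_assm_2} and the auxiliary positivity assumption, the $C^{1,2}$ regularity transfers to $V_T$ on $\mathcal{C}_T$. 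The PDE in \eqref{value_function_boundary_problem} then follows either by a direct change-of-variables computation from the equation satisfied by $u_T$, or independently by applying It\^o's formula to $V_T(t+s, \Pi^\pi(s))$ on a compact subdomain $\mathcal{N} \subset \mathcal{C}_T$ and using the martingale identity $V_T(t,\pi) = \ex\bigl[V_T(t+\sigma,\Pi^\pi(\sigma))\bigr]$ up to the first exit time $\sigma$ from $\mathcal{N}$ (a standard consequence of the dynamic programming principle, itself underpinned by the continuity afforded by Proposition \ref{prop_properties_of_value_func}).

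To remove the auxiliary assumption $c_i(T) > 0$ in the finite-horizon case, I would approximate the discount functions by $c_i^\eps(t) \coloneqq c_i(t) + \eps\,(T - t)/T$, which satisfy \ref{reg_assm_1}--\ref{reg_assm_5} together with the positivity condition at $T$. The corresponding value functions $V_T^\eps$ are uniformly close to $V_T$ (directly from \eqref{expected_payoff_via_Pi_process}), so interior $C^{1,2}$ bounds on compact subsets of $\mathcal{C}_T$ can be extracted via standard parabolic estimates and an Arzel\`a--Ascoli argument, delivering both the PDE and the regularity in the limit. The infinite-horizon case $T = \infty$ is handled by a further approximation via finite-horizon problems $V_{T_n}$ with $T_n \uparrow \infty$, combined with monotonicity of $V_{T_n}$ in $T_n$ and the local continuity estimates of Proposition \ref{prop_properties_of_value_func}, so that the PDE and $C^{1,2}$ regularity pass to the limit on compact subsets of $\mathcal{C}_\infty$.

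The principal obstacle is the regularity step itself: because the free boundary $b_T(\cdot)$ is not known to be smooth or even continuous at this stage of the paper, one cannot appeal to boundary regularity theory for parabolic PDEs directly. The American-option route via \cite{Lamberton} circumvents this difficulty by invoking obstacle-problem regularity, which is robust to irregularity of the contact set. A secondary obstacle is the degeneration of the change of variables $y(t,\pi)$ at $t = T$ when $c_i(T) = 0$, which forces the limiting argument in the third paragraph to be handled with care rather than treated as routine; a parallel caveat applies to the infinite-horizon extension.
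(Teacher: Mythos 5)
Your proposal is correct in outline but follows a genuinely different, and considerably heavier, route than the paper's. The paper's argument is essentially two lines: it invokes the standard localization argument for optimal stopping (Theorem 7.7 of Karatzas and Shreve \cite{KarShr98}, see also \cite{Jacka91}), whose only substantive hypothesis is that $\mathcal{C}_T$ be an open set, and then verifies that openness directly by writing $\mathcal{C}_T$ as the preimage of $(0,\infty)$ under the continuous map $H = V_T - G$ on $[0,T)\times(0,1)$ (continuity of $V_T$ is Proposition \ref{prop_properties_of_value_func}\ref{value_func_continuity}; continuity of $G$ on this domain uses \ref{reg_assm_1}). The two boundary conditions in \eqref{value_function_boundary_problem} are immediate from Proposition \ref{prop_structure_of_regions}. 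Crucially, this argument applies uniformly for every $T \in [0,\infty]$ and makes no use of the auxiliary positivity $c_i(T)>0$, so none of the approximation or limiting layers in your third and fourth paragraphs are needed. What the localization argument actually does is the reverse of what you sketch: it solves a Cauchy--Dirichlet problem on a small parabolic cylinder $\mathcal{N}\subset\mathcal{C}_T$ with boundary data given by the continuous $V_T$, obtains a \emph{classical} $C^{1,2}$ solution from standard parabolic theory, applies It\^o's formula to that classical solution, and then uses optional sampling plus the martingale property of $V_T(t+\cdot\wedge\sigma,\Pi^\pi(\cdot\wedge\sigma))$ in $\mathcal{C}_T$ to conclude that the classical solution coincides with $V_T$ on $\mathcal{N}$.

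This ordering matters, and is where your proposal contains a genuine gap: the alternative derivation you offer, ``applying It\^o's formula to $V_T(t+s,\Pi^\pi(s))$ on a compact subdomain,'' cannot serve as an independent route to the PDE, because applying It\^o to $V_T$ already presupposes the very $C^{1,2}$ regularity one is trying to prove. As for the main route via \cite{Lamberton}: it can be made to work, but it is not a direct citation. Jaillet--Lamberton--Lapeyre establish Sobolev ($W^{1,2,p}_{\rm loc}$) regularity of $u_T$ and the joint continuity of its spatial gradient; to get from there to $C^{1,2}$ in the continuation region one must still invoke interior parabolic regularity (Schauder, or $W^{2,1,p}\hookrightarrow C^{1,\alpha}$ plus bootstrapping), which is precisely what the Karatzas--Shreve localization does in one step. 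Routing through American options also forces you into the temporary assumption $c_i(T)>0$ and the subsequent $\eps$-perturbation and $T_n\uparrow\infty$ approximations, each of which would require uniform interior estimates that you treat as routine but which would need care (the continuation regions move as $\eps\to 0$). The paper avoids all of this here and reserves the American-option machinery for Proposition \ref{prop_C1_value_func}, where it is actually needed for the smooth fit. You correctly identify the central obstacle---unknown regularity of the free boundary $b_T(\cdot)$---and correctly observe that obstacle-problem techniques and interior localization are both robust to it; but the paper's choice is the lighter of the two tools.
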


\begin{proof}
The validity of the differential equation in \eqref{value_function_boundary_problem} follows from a fairly standard procedure in the theory of optimal stopping. Thus, we omit the argument and refer the reader to the proof of Theorem 7.7 in Karatzas and Shreve \cite{KarShr98} for details (see also \cite{Jacka91}), which relies heavily on the fact that the continuation region $\mathcal{C}_T$ is open, which is valid in our case. Indeed, it is clear that for $t = T$ the region $\mathcal{C}_T$ is empty, since $V_T(T, \cdot) \equiv G(T, \cdot)$, while for $t\in [0, T)$ the region $\mathcal{C}_T$ can be alternatively written as
\begin{equation*}
    \begin{split}
        \mathcal{C}_T &= \{(t, \pi) \in[0, T) \times (0,1): V_T(t, \pi) - G(t, \pi) > 0\} 
        \\&= \{(t, \pi) \in[0, T) \times (0,1): (t, \pi) \in H^{-1}(0, \infty)\},
    \end{split}
\end{equation*}
where $H \coloneqq V_T - G: [0, T) \times (0, 1) \to [0, \infty)$ is a continuous function due to \ref{reg_assm_1} and  Proposition \ref{prop_properties_of_value_func}, and we write $\pi \in (0,1)$ since $[0,T] \times \{0,1\} \subseteq \mathcal{S}_T$, trivially. Thus, the set $\mathcal{C}_T$ is open, being the preimage of the open set $(0, \infty)$ under a continuous mapping.
Two other identities of the system \eqref{value_function_boundary_problem} follow from Proposition \ref{prop_structure_of_regions}.
\end{proof}

\begin{remark}\label{remark_lsc}
   The fact that $\mathcal{C}_T$ is open implies the lower semi-continuity of the boundary $b_T(\cdot)$.
\end{remark}

For the smooth-fit property of the value function $V_T(\cdot, \cdot)$, we need to establish the following result, which will be essential in the subsequent treatment of the infinite time horizon case.

\begin{Prop}\label{prop_Lipschitz_property_spatial_derivative}
    Fix any $T \in [0,\infty]$ and $0 \leq t < t' < T$. The family of functions $\{\partial_\pi V_S(t, \cdot)\}_{t' \le S < T}$ is uniformly locally Lipschitz. More precisely, with $\mathcal{K}$ being the family of all compact subsets of $(0, 1)$, there exists a function $C: \mathbb{R}_+ \times \mathbb{R}_+ \times \mathcal{K} \to \mathbb{R}_+$ such that, for any fixed $(t, K) \in \mathbb{R}_+ \times \mathcal{K}$, the function $S \mapsto C(S, t, K)$ is non-increasing and for all $S \in (t, T)$ we have 
    \begin{equation}\label{lipschitzianity_spatial_derivative}
        \big| \partial_\pi V_S(t, x) - \partial_\pi V_S(t, y) \big| \le C(S, t, K) \cdot |x - y|, \quad \forall \, x, y \in K.
    \end{equation}
\end{Prop}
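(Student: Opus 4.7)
The plan is to leverage the American-option reformulation from Subsection \ref{subsec_american_options_formulation}. By \eqref{american_call_value_fn} we have $V_S(t,\pi) = (1-\pi)\,c_0(t)\,u_S(t, y(t,\pi))$ with $y(t,\pi) = \log\big(\pi c_1(t)/((1-\pi)c_0(t))\big)$. Differentiating in $\pi$ via the chain rule (using $\partial_\pi y = 1/(\pi(1-\pi))$) gives
\[
\partial_\pi V_S(t,\pi) = -c_0(t)\,u_S\big(t, y(t,\pi)\big) + \frac{c_0(t)}{\pi}\,\partial_y u_S\big(t, y(t,\pi)\big).
\]
Since both $\pi \mapsto y(t,\pi)$ and $\pi \mapsto 1/\pi$ are smooth on any compact $K \subset (0,1)$, sending $K$ to a compact $\widetilde K \subset \mathbb{R}$ with Lipschitz constants depending on $(t, K)$ but not on $S$, the task reduces to establishing a uniform-in-$S$ local Lipschitz property of $\partial_y u_S(t,\cdot)$ on $\widetilde K$, with bound that can be made non-increasing in $S$.

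For each fixed finite $S \in [t', T)$ (restricting at first to the regime $c_i(S) > 0$ so that the framework of Subsection \ref{subsec_american_options_formulation} applies directly), I would invoke the regularity results of Jaillet, Lamberton, and Lapeyre \cite{Lamberton} to deduce that $u_S$ is of class $C^{1,1}$ in its spatial argument. The payoff $\psi(y) = (e^y - 1)^+$ is convex and smooth off $\{y = 0\}$, and the coefficients in the dynamics of $Y^{t,y}(\cdot)$ — drift $\beta_1 - \beta_0 - a^2/2$ bounded thanks to \ref{reg_assm_4}, and constant diffusion $a$ — meet their structural assumptions. This yields an initial Lipschitz bound $C_0(S, t, \widetilde K)$ for $\partial_y u_S(t,\cdot)$ on $\widetilde K$.

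The crucial step, which I expect to be the main obstacle, is to control $C_0(S, t, \widetilde K)$ uniformly as $S$ varies over $[t', T)$ — most delicately when $T = \infty$, or when $c_i(T) = 0$. First I would derive the universal sup-norm estimate $u_S(t,y) \le e^y$, uniform in $S$: combining \ref{reg_assm_3} (giving $\beta_0 \le 0$) and \ref{reg_assm_5} (giving $\beta_1 - \beta_0 \le 0$) with the supermartingale property of the exponential $\mathcal{E}(a(\overline{B}(\cdot) - \overline{B}(t)))$ bounds the expected payoff from above by $e^y$. Interior parabolic regularity applied to the obstacle problem satisfied by $u_S$ (smooth bounded coefficients in the continuation region, smooth obstacle away from $y = 0$) then controls $\|\partial_{yy} u_S\|_{L^\infty(\widetilde K)}$ in terms of $\|u_S\|_{L^\infty(\widetilde K')}$ on a slightly larger compact $\widetilde K' \supset \widetilde K$, and hence uniformly in $S$.

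Finally, to enforce the non-increasing $S$-dependence of the claimed bound, I would set
\[
C(S, t, K) \coloneqq \sup_{S' \in [S, T)} C_1(S', t, K),
\]
where $C_1(S', t, K)$ is the $\pi$-Lipschitz bound for $\partial_\pi V_{S'}(t,\cdot)$ on $K$ inherited from $C_0$ together with the chain-rule factors above. By construction this is non-increasing in $S$ and, by the uniform control established in the previous paragraph, is finite. In the case $T = \infty$, the finiteness of the supremum as $S' \uparrow \infty$ is secured by the uniform estimates combined with monotone convergence $u_S \uparrow u_\infty$, which also serves to transfer the \cite{Lamberton} regularity from each finite horizon to the infinite one.
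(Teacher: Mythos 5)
Your overall plan --- pushing through the American-option reformulation \eqref{american_call_value_fn}, converting a local Lipschitz bound for $\partial_\pi V_S(t,\cdot)$ into a uniform bound on a second spatial derivative of $u_S(t,\cdot)$ --- is parallel to what the paper does, and your chain-rule decomposition $\partial_\pi V_S(t,\pi) = -c_0(t)\,u_S(t,y(t,\pi)) + \frac{c_0(t)}{\pi}\,\partial_y u_S(t,y(t,\pi))$ and the universal sup-norm bound $u_S(t,y) \le e^y$ are both correct and appear (in that form) in the paper's Step 3. The gap is in the middle of your argument: you claim that ``interior parabolic regularity applied to the obstacle problem'' gives $\|\partial_{yy}u_S\|_{L^\infty(\widetilde K)} \lesssim \|u_S\|_{L^\infty(\widetilde K')}$ \emph{uniformly in $S$}, on the grounds that the coefficients are ``smooth and bounded.'' That premise fails. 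The drift and discount rate of the American option are $\beta_1(\cdot) - \beta_0(\cdot)$ and $-\beta_0(\cdot)$ with $\beta_i = c_i'/c_i$, and for a fixed initial time $t$ the problem $u_S(t,\cdot)$ sees these coefficients on the full interval $[t,S]$. When $c_i(T)=0$ (finite or infinite $T$), $\sup_{[t,S]}|\beta_i|$ blows up as $S \uparrow T$ (e.g.\ $c_i(s) = (T-s)^2$ gives $\beta_i(s) = -2/(T-s)$), so any Schauder/obstacle-problem $C^{1,1}$ estimate whose constant depends on the coefficient sup-norms over $[t,S]$ degrades unboundedly. Moreover, for a variational inequality, classical \emph{interior} parabolic estimates only control second derivatives at a fixed distance from the free boundary; getting a uniform bound across the (moving) free boundary requires genuine obstacle-problem regularity, whose quantitative constants you have neither cited nor verified to be $S$-independent.

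The paper avoids precisely this issue. After the time change of Lemma \ref{lem_time_changed_american_option} (rescaling the horizon $[t,S]$ to $[0,1]$ so that the set of admissible stopping times for $u_S(t,\cdot)$ and $u_S(t+h,\cdot)$ coincide), Lemmas \ref{lem_time_derivative_representation}--\ref{lem_u_t_final_bound} give a probabilistic representation and an explicit bound for $\partial_t u_S(t,y)$ in which the only coefficient quantities that appear are $\beta_0(t),\beta_1(t)$ \emph{at the fixed starting time $t$} and the constant $D(t,T) = \sup_{[t,T)}\left(|c_i'| \vee |c_i|\right)$, which is finite by \ref{reg_assm_3}--\ref{reg_assm_4} even when $c_i(T)=0$. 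The PDE then converts this into the bound on $\partial_{\pi\pi}V_S$; critically, the resulting constant $C(S,t,K)$ is written out in closed form and is \emph{manifestly} non-increasing in $S$. Your proposal instead defines $C(S,t,K) \coloneqq \sup_{S' \in [S,T)} C_1(S',t,K)$, which is circular: the finiteness of that supremum is exactly the uniform control you were trying to prove. To repair your argument you would need either a verified quantitative parabolic obstacle-problem estimate whose constants do not degrade with the coefficient sup-norms over $[t,S]$, or (as the paper does) a mechanism that keeps the dependence on $\beta_i$ localized to the starting time $t$.
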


This result is proved in Appendix \ref{subsec_app_lipschitz} and requires high technical involvement, including time and space changes, introducing new processes, careful estimates, etc. We want to highlight that Proposition \ref{prop_Lipschitz_property_spatial_derivative}, together with the next result, provides a new approach to treating the spatial differentiability of the value function in problems of optimal stopping with an infinite time horizon.

\begin{Prop}\label{prop_C1_value_func}
    For any $T \in [0, \infty]$ and any fixed $t \in [0, T)$, the function $\pi \mapsto V_T(t, \pi)$ is continuously differentiable on $(0, 1)$; moreover, if $T \in [0, \infty)$, the function $\partial_\pi V_T(\cdot, \cdot)$ is jointly continuous on $[0, T) \times (0, 1)$ as long as $c_i(\cdot), \, i = 0, 1$, are of class $C^2([0, T])$ and $c_i(T) > 0, \, i = 0,1$.
\end{Prop}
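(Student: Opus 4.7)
My plan is to separate the two claims and handle them by different mechanisms. The joint-continuity statement for finite $T$ with $c_i \in C^2([0, T])$ and $c_i(T) > 0$ is obtained via the American-option reformulation of Subsection \ref{subsec_american_options_formulation}; the continuous-differentiability-in-$\pi$ statement for general $T \in [0, \infty]$ is then extracted from this by a limiting argument anchored by Proposition \ref{prop_Lipschitz_property_spatial_derivative}.

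Under the strong hypotheses, the identity \eqref{american_call_value_fn} expresses $V_T(t, \pi) = (1 - \pi)\, c_0(t)\, u_T(t, y(t, \pi))$, reducing regularity questions about $V_T$ to those about the American-option value $u_T$. In this regime the coefficients $\beta_0, \beta_1$ of \eqref{definition_beta_functions} are bounded and continuous on $[0, T]$, the diffusion coefficient is the constant $a$, and the payoff $\psi(y) = (e^y - 1)^+$ is globally Lipschitz. Classical smooth-fit theory for American options, in particular that of Jaillet, Lamberton and Lapeyre \cite{Lamberton}, then yields that $\partial_y u_T$ exists and is jointly continuous on $[0, T) \times \mathbb{R}$. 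Composing with the smooth diffeomorphism $\pi \mapsto y(t, \pi) = \log(\pi c_1(t)/((1 - \pi) c_0(t)))$, which is jointly continuous in $(t, \pi)$ on $[0, T) \times (0, 1)$ because $c_i$ stays positive on $[0, T]$, the chain rule transfers this regularity to $\partial_\pi V_T$.

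For the first statement at general $T \in [0, \infty]$, fix $t \in [0, T)$ and any $t' \in (t, T)$. For every finite $S \in [t', T)$, the restrictions $c_i|_{[0, S]}$ automatically satisfy the strong hypotheses thanks to \ref{reg_assm_1} and \ref{reg_assm_2}, so the previous step gives $V_S(t, \cdot) \in C^1((0, 1))$. Proposition \ref{prop_Lipschitz_property_spatial_derivative} furnishes a uniform-in-$S$ local Lipschitz constant for $\partial_\pi V_S(t, \cdot)$ on every compact $K \subset (0, 1)$, while Proposition \ref{prop_properties_of_value_func}\ref{value_func_Lipschitz_pi} provides the uniform pointwise bound by $c_1(t) + c_0(t)$. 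Hence $\{\partial_\pi V_S(t, \cdot)\}_{S \in [t', T)}$ is uniformly bounded and equicontinuous on each such $K$. Monotonicity $S \mapsto V_S(t, \pi)$ (non-decreasing, from nesting of the admissible collections $\mathcal{T}_{S-t}$), combined with a routine truncation $\tau \mapsto \tau \wedge (S_n - t)$ and monotone convergence of the integrand in \eqref{expected_reward_general_pi}, yields pointwise convergence $V_{S_n}(t, \pi) \to V_T(t, \pi)$ for any sequence $S_n \uparrow T$. Arzelà-Ascoli then extracts a subsequence along which $\partial_\pi V_{S_n}(t, \cdot)$ converges uniformly on compacts to some continuous $g$; integrating $V_{S_n}(t, x) - V_{S_n}(t, x_0) = \int_{x_0}^x \partial_\pi V_{S_n}(t, u)\, du$ and passing to the limit identifies $g$ with $\partial_\pi V_T(t, \cdot)$, which is therefore continuous. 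Uniqueness of the limit shows the whole family converges.

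The principal obstacle is the first invocation: the smooth-fit theory in \cite{Lamberton} is tailored to American-option problems with time-dependent but regular coefficients, so one must verify carefully that our $(\beta_0, \beta_1, a, \psi)$ meet their structural hypotheses, and then track joint continuity through the nonlinear change of variables $y(t, \pi)$ without losing regularity in $t$. Once that bridge is secured, the passage to general $T$ is essentially mechanical, since Proposition \ref{prop_Lipschitz_property_spatial_derivative} has already done the heavy lifting of providing the equicontinuity needed to commute the limit $S \to T$ with the derivative $\partial_\pi$.
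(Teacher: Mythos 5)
Your proposal follows the paper's own argument essentially step for step: reduce the strong case (finite $T$, $c_i \in C^2([0,T])$, $c_i(T)>0$) to the smooth-fit result of Jaillet--Lamberton--Lapeyre via \eqref{american_call_value_fn}, and then bootstrap to general $T$ using Lemma \ref{lem_value_incr_monotonically_T_upward}, the uniform local Lipschitz bound of Proposition \ref{prop_Lipschitz_property_spatial_derivative}, the uniform pointwise bound of Proposition \ref{prop_properties_of_value_func}\ref{value_func_Lipschitz_pi}, and Arzel\`a--Ascoli. The only deviations are cosmetic (you unfold the derivative-identification step by integration rather than citing the standard theorem, and "monotone convergence of the integrand" should be bounded convergence), so this is the same proof.
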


\begin{proof} 
    We first consider the case where $T < \infty$, $c_i(\cdot) \in C^2([0, T])$, $i = 0,1$, and $c_i(T) > 0, \, i = 0,1$, and then obtain the remaining cases by limit-based arguments.
    We prove that $\partial_\pi V_T(\cdot, \cdot)$ exists and is jointly continuous by exploiting the connection of our problem with American options, discussed in Subsection \ref{subsec_american_options_formulation}. 
    
    Recall the representation \eqref{american_call_value_fn} of the value function $V_T(\cdot, \cdot)$ via the value function $u_T(\cdot, \cdot)$ of the corresponding American option defined in \eqref{value_function_sup_american_option}--\eqref{american_call_value_fn}. Since the function $y(\cdot, \cdot)$ of \eqref{change_of_initial_position} is continuously differentiable in the spatial argument on $[0, T) \times (0, 1)$, 
    to obtain the same property of the function $V_T(\cdot, \cdot)$ it suffices to show joint continuity of the function $\partial_y u_T(\cdot, \cdot)$ on $[0, T) \times \mathbb{R}$. The latter, however, is immediately implied by Theorem 3.6 and Corollary 3.7 of \cite{Lamberton}, whose framework is applicable to our situation. 
    
    To be precise, we match the notation of that paper: the assumption (H1) is satisfied by $\beta(t) \coloneqq \beta_1(t) - \beta_0(t) - a^2/2$, defined in \eqref{definition_beta_functions}, and our assumption \ref{reg_assm_2}. Assumptions (H2) and (H3) are trivially satisfied by the constant diffusion coefficient $a > 0$. Assumption (H4) is satisfied by the function $r(t) \coloneqq -\beta_0(t)$ due to \ref{reg_assm_2} and \ref{reg_assm_3}. Assumption (H5) is satisfied due to the fact that the function $e^{-x} \, \psi(x) = e^{-x}(e^x - 1)^+$ is a bounded, continuous function with a bounded weak derivative. And assumption (H6) is satisfied since, in our case, the function $\psi(\log(x)) = (x-1)^+$ is convex on $(0, \infty)$. 
    Therefore, the value function $u_T(\cdot, \cdot)$ coincides with the one defined in equation (2.2) of \cite{Lamberton}, implying continuous differentiability of $u_T(t,\cdot)$ for each $t \in [0,T)$.
    
    Now, we consider the remaining cases. Fix any $t \in [0, T)$. To prove that $\partial_\pi V_T(t, \cdot)$ is of class $C^1\big((0, 1)\big)$, it suffices to show that $\partial_\pi V_T(t, \cdot) \in C^1(K)$ for any compact subset $K$ of $(0, 1)$. Thus, we fix any such $K$ and proceed as follows.
    
    \underline{Step 1:} Let $\{T_n\}_{n \in \mathbb{N}}$ be any increasing sequence such that $T_n \in (t, T)$ and $T_n \uparrow T$. Note that the family of functions $\{V_{T_n}(t, \cdot)\}_{n \in \mathbb{N}}$ is increasing monotonically to $V_T(t, \cdot)$ by Lemma \ref{lem_value_incr_monotonically_T_upward} of the Appendix.
    
    \underline{Step 2:} Next, note that the functions $\{V_{T_n}(t, \cdot)\}_{n \in \mathbb{N}}$ are differentiable on $(0, 1)$, and, moreover, the family $\{\partial_\pi V_{T_n}(t, \cdot)\}_{n \in \mathbb{N}}$ is uniformly equicontinuous and uniformly bounded on $K$. The former is a consequence of Proposition \ref{prop_Lipschitz_property_spatial_derivative}, with uniform Lipschitz constant $C(T_1, t, K)$, while the latter follows from the Lipschitz property of the value functions proved in part \ref{value_func_Lipschitz_pi} of Proposition \ref{prop_properties_of_value_func}. As a result, we can apply the Arzel\`a--Ascoli theorem to find a subsequence $\{\partial_\pi V_{T_{n_j}}(t, \cdot)\}_{n \ge 0}$ that converges uniformly on $K$ as $j \to \infty$.
    
    \underline{Step 3:} The uniform convergence of $\{\partial_\pi V_{T_{n_j}}(t, \cdot)\}_{j \in \mathbb{N}}$ as well as the convergence of $V_{T_{n_j}}(t, \cdot)$ to $V_T(t, \cdot)$, in turn, imply that the function $V_T(t, \cdot)$ is differentiable and $\lim_{j \to \infty} \partial_\pi V_{T_{n_j}}(t, \pi) = \partial_\pi V_T(t, \pi)$ for any $\pi \in K$ (see, e.g., Theorem 7.17 in Rudin \cite{Rudin}). Since the convergence of the continuous functions $\partial_\pi V_{T_{n_j}}(t, \cdot)$ is uniform, $\partial_\pi V_T(t,\cdot)$ is continuous, completing the proof.
\end{proof}

\begin{remark}
    For any  $T \in [0, \infty)$, if  $c_i(T)>0$ and $c_i(\cdot)\in C^2([0,T])$ for $i=0,1$, the function $\partial_t V_T(\cdot,\cdot)$ is continuous almost everywhere across the boundary, i.e., for almost every 
    $t \in [0,T)$, $\lim_{(t',x')\to (t,b(t))} \partial_tV_T(t',x') =\partial_t G(t,b(t)).$
    To obtain this result, we apply Theorem 3.6 of \cite{Lamberton} to $u_T(\cdot,\cdot)$ once again; this implies that $u_T(\cdot,\cdot)$ satisfies Equation (2.1) of \cite{Blanchet}. Then, by Theorem 2.6 of \cite{Blanchet}, the variational inequality in Equation (1.1) of the same paper is also satisfied by $u_T(\cdot,\cdot)$. Theorem 1.1 of \cite{Blanchet} then provides the claimed almost everywhere continuity of $u_T(\cdot,\cdot)$, which extends to $V_T(\cdot,\cdot)$ thanks to \eqref{american_call_value_fn}.
\end{remark}

We next establish the following result about the applicability of Itô's lemma in our setting. The main purpose of this result is to support the derivation of the integral characterization \eqref{integral_equation_for_boundary} of the optimal stopping boundary, which will be formally established in Proposition \ref{prop_boundary_equation}. We also need the applicability of Itô's lemma to establish the uniqueness of the solution to the free boundary problem \eqref{value_function_boundary_problem}, established in Proposition \ref{prop_uniqueness_free_boundary}.

\begin{Prop}\label{ito_for_value_function}
    For any $T \in [0, \infty]$, It\^{o}'s rule is applicable to the value function $V_T(\cdot, \cdot)$. In particular, for any $(t, \pi) \in [0,T] \times [0,1]$ and $s \in [0,T-t]$, we have
    \begin{equation}
        \begin{split}
            V_T(t+s, \Pi^\pi(s)) = V_T(t, \pi) 
            &+ \int_0^s \mathbf{1}_{\{\Pi^\pi(u) \ge b_T(t+u) \}} \Big(c_1'(t+u)\Pi^\pi(u)
            -c_0'(t+u)(1-\Pi^\pi(u)) \Big)\, du
            \\&+ \int_0^s \frac{\partial V_T}{\partial \pi}\big(t+u, \Pi^\pi(u)\big) \, \Pi^\pi(u) \, (1-\Pi^\pi(u)) \, dB(u).
        \end{split} \label{ito_equation_for_value_function}
    \end{equation}
\end{Prop}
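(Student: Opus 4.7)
The plan is to apply It\^{o}'s formula piecewise, using the regularity of $V_T$ inside the continuation and stopping regions, and to handle the free boundary $b_T(\cdot)$ via a mollification (or Peskir-type local-time) argument, with the smooth-fit property providing the crucial cancellation.

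First I would note that on the open continuation region $\mathcal{C}_T$, Proposition \ref{prop_boundary_problem_value_func} gives $V_T \in C^{1,2}$ together with the PDE
\begin{equation*}
\partial_t V_T(t,\pi) + \frac{a^2}{2}\pi^2(1-\pi)^2 \partial_{\pi\pi} V_T(t,\pi) = 0.
\end{equation*}
On the interior of the stopping region intersected with $\{g > 0\}$ we have $V_T(t,\pi) = g(t,\pi) = c_1(t)\pi - c_0(t)(1-\pi)$, which is $C^\infty$ with $\partial_{\pi\pi} V_T \equiv 0$, so the infinitesimal generator of the time-space process acts on $V_T$ there as $\partial_t g(t,\pi) = c_1'(t)\pi - c_0'(t)(1-\pi)$; on the remainder of the stopping region one has $V_T \equiv 0 = G$. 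This matches exactly the drift appearing in \eqref{ito_equation_for_value_function}.

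To bridge these regions across $b_T(\cdot)$, I would introduce a spatial mollification $V_T^{\varepsilon}(t,\pi) \coloneqq (V_T(t,\cdot) * \phi_\varepsilon)(\pi)$ via a smooth mollifier $\phi_\varepsilon$, apply classical It\^{o}'s formula to $V_T^{\varepsilon}(t+s, \Pi^\pi(s))$, and pass to the limit $\varepsilon \downarrow 0$. Uniform convergence $V_T^{\varepsilon} \to V_T$ and $\partial_\pi V_T^{\varepsilon} \to \partial_\pi V_T$ on compact subsets of $(0,1)$ --- a consequence of the global spatial $C^1$ regularity from Proposition \ref{prop_C1_value_func} together with the uniform Lipschitz estimate of Proposition \ref{prop_Lipschitz_property_spatial_derivative} --- handles the left-hand side and the stochastic-integral term. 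For the drift, the computation above shows that away from the graph of $b_T(\cdot)$ the mollified generator $\mathcal{L}V_T^{\varepsilon}$ converges locally uniformly to $\mathbf{1}_{\{\pi \ge b_T(t)\}}(c_1'(t)\pi - c_0'(t)(1-\pi))$; since $b_T(\cdot)$ has Lebesgue measure zero by Lemma \ref{boundary_measure_zero}, an occupation-time argument for $\Pi^\pi(\cdot)$ based on the strict non-degeneracy of its diffusion coefficient on compact subsets of $(0,1)$ gives that the process spends zero time on this graph, so its contribution to the drift integral vanishes.

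The main obstacle is the rigorous limit passage for the second-derivative term near the free boundary: a priori $\partial_{\pi\pi} V_T$ may have a jump across $b_T(\cdot)$, and the mollified generators $\mathcal{L}V_T^{\varepsilon}$ concentrate mass there. The key point is that the \emph{smooth-fit} property of Proposition \ref{prop_C1_value_func} ensures that no singular local-time term survives in the limit: any potential local-time integrand on $\{\pi = b_T(t)\}$ is precisely the jump of $\partial_\pi V_T$ across the boundary, which vanishes identically by smooth-fit. Equivalently, one may bypass the mollification altogether and invoke Peskir's change-of-variable formula with local time on curves for the function $V_T$ and the curve $b_T(\cdot)$: the $C^{1,2}$ regularity on either side of the curve together with smooth-fit collapse Peskir's formula directly to \eqref{ito_equation_for_value_function}.
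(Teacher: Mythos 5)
Your mollification idea is the paper's route, and you correctly identify the three ingredients that make it work: interior $C^{1,2}$ regularity with the PDE, the measure-zero boundary (Lemma \ref{boundary_measure_zero}), and the joint continuity of $\partial_\pi V_T$ (smooth fit). However, there are three genuine gaps.

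First, you mollify only in the spatial variable, $V_T^{\varepsilon}(t,\pi)=(V_T(t,\cdot)*\phi_\varepsilon)(\pi)$. Applying the \emph{classical} two-dimensional It\^{o} formula to $V_T^\varepsilon(t+s,\Pi^\pi(s))$ requires $C^{1,2}$ regularity; spatial mollification gives $C^\infty$ in $\pi$, but $\partial_t V_T^\varepsilon$ inherits the (weak, merely locally bounded) time regularity of $\partial_t V_T$, which is not continuous across $\partial\mathcal{C}_T$. So the classical It\^{o} step is not justified. The paper avoids this by convolving in \emph{both} variables with a smooth two-dimensional mollifier, which makes $V_T^{(\varepsilon)}$ genuinely $C^\infty(\mathbb{R}^2)$ and keeps the argument elementary.

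Second, your closing ``equivalently, one may invoke Peskir's change-of-variable formula with local time on curves'' is not valid here. That formula needs the boundary $b_T(\cdot)$ to be a sufficiently regular curve (typically continuous and of bounded variation). In this problem, under assumptions \ref{reg_assm_1}--\ref{reg_assm_5} alone, no such structure on $b_T(\cdot)$ is available --- indeed, the whole point of the paper's argument, as it explicitly remarks, is to \emph{circumvent} the need for boundary regularity by working with weak derivatives, joint continuity of $\partial_\pi V_T$, and the measure-zero property of $\partial\mathcal{C}_T$. Relatedly, your occupation-time step conflates the graph of $b_T(\cdot)$ (which is always null, trivially) with the topological boundary $\partial\mathcal{C}_T$, which can be strictly larger when $b_T(\cdot)$ has jumps; the nontrivial fact supplied by Lemma \ref{boundary_measure_zero} is the nullity of the latter, and that is what must be used when handling the drift term.

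Third, the proposal is written as if $T$ were finite and $c_i(T)>0$. The paper first proves the formula in exactly that regime (where the American-option machinery gives the weak derivatives and their local bounds), and then obtains the general case, including $T=\infty$ and vanishing terminal discounts, by a separate approximation argument $T_n\uparrow T$ that requires controlling the convergence of $\partial_\pi V_{T_n}$ (via the Arzel\`a--Ascoli/Lipschitz machinery of Proposition \ref{prop_Lipschitz_property_spatial_derivative}) and of the boundary indicators $\mathbf{1}_{\{\Pi^\pi(u)\ge b_{T_n}(t+u)\}}$. Omitting this extension leaves a substantial portion of the statement unproved.
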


The proof of Proposition \ref{ito_for_value_function} relies on standard mollification techniques and is carried out in Appendix \ref{subsec_app_ito}.

\begin{remark} 
    One could obtain the equation \eqref{ito_equation_for_value_function} by applying naively Itô's rule to the process $V_T(t + \cdot, \Pi^\pi(\cdot))$. 
    However, the classical two-dimensional Itô's lemma requires $C^{1, 2}$ global regularity of the underlying function, which is not the case in our setting, or in optimal stopping problems more generally. Therefore, a more delicate analysis is usually needed to obtain identities such as \eqref{ito_equation_for_value_function}. In the context of free-boundary problems, the applicability of Itô's rule can often be granted when additional structural properties of the value function and stopping boundary are available; for instance, it is helpful to have a continuous boundary with finite variation (see, e.g., Peskir \cite{Peskir05}). We circumvent such requirements for our problem by leveraging (1) the global existence and local boundedness of the weak derivatives of $V_T(\cdot,\cdot)$ (obtained through the relationship with American options derived in Section \ref{subsec_american_options_formulation}), (2) the joint continuity of $\partial_\pi V_T(\cdot, \cdot)$ (c.f. Proposition \ref{prop_C1_value_func}), and (3) a proof that the boundary of the continuation region has Lebesgue measure zero (c.f. Lemma \ref{boundary_measure_zero}). Note that these three conditions guarantee the applicability of Itô's rule in more general settings than the one of the current paper, as can be seen from the proof of Proposition \ref{ito_for_value_function}.
\end{remark}

We conclude this section by stating the uniqueness of the solution to the free-boundary problem \eqref{value_function_boundary_problem}. We write $W_b^{1, 2}$ to denote the space of bounded continuous functions with locally bounded weak derivatives up to order one in the temporal variable and order two in the spatial variable. 

\begin{Prop}\label{prop_uniqueness_free_boundary}
    Fix any $T \in [0, \infty]$ and a pair $\big(f(\cdot, \cdot), d(\cdot)\big)$ of functions $f: [0, T] \times [0, 1] \to \mathbb{R}$ and $d:[0, T] \to [0, 1]$. Define the strict hypograph of $d(\cdot)$ by
    $\mathcal{D} \coloneqq \{(t, \pi) \in [0, T) \times (0, 1): \pi < d(t) \}$.
    If $\big(f(\cdot, \cdot), d(\cdot)\big)$ satisfies
    \begin{align}
        &f \in W_b^{1, 2},  \label{uninq_1}\\
        &\partial_t f(t, \pi) + \dfrac{a^2}{2}\pi^2 (1 - \pi)^2 \, \partial_{\pi\pi} f(t, \pi) = 0, & 0 < \pi < d(t), \, t \in [0, T), \label{uninq_2} \\
        & f(t, \pi) = g(t, \pi), & \pi \ge d(t), t \in [0, T), \label{uninq_3_1} \\
        & f(T, \pi) = G(T, \pi), & \pi \in [0, 1], \label{uninq_3_2} \\
        &f(t, \pi) \ge G(t, \pi), & (t, \pi) \in [0, T] \times [0, 1], \label{uninq_4} \\
        &d(\cdot) \text{ is lower semi-continuous,} \label{uninq_5} \\
        & \text{Leb}(\partial \mathcal{D}) = 0, \label{uninq_6}
    \end{align}
    for $G(\cdot, \cdot)$ and $g(\cdot, \cdot)$ as in \eqref{gain_function} and \eqref{gain_function_with_indicator}, then $f(\cdot, \cdot) \equiv V_T(\cdot, \cdot)$ everywhere and $ d(\cdot)=b_T(\cdot)$ Lebesgue almost everywhere.
\end{Prop}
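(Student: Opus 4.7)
The plan is to establish the two inequalities $V_T \leq f$ and $V_T \geq f$ separately via a generalized Itô formula for $f$, and then to extract the a.e.\ identification of $d$ with $b_T$ from the resulting structural information. The backbone of the argument is a version of Itô's rule: for any starting point $(t, \pi)$ and any $\tau \in \mathcal{T}_{T - t}$,
\[
f(t + \tau, \Pi^\pi(\tau)) = f(t, \pi) + \int_0^\tau \mathcal{L}f(t + u, \Pi^\pi(u)) \, du + M_\tau,
\]
where $\mathcal{L} \coloneqq \partial_t + \tfrac{a^2}{2} \pi^2 (1-\pi)^2 \partial_{\pi\pi}$ and $M$ is a local martingale. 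I would justify this by mollifying $f$ and passing to the limit in the spirit of Proposition \ref{ito_for_value_function}; the only structural inputs used are $f \in W^{1,2}_b$ from \eqref{uninq_1} and $\operatorname{Leb}(\partial \mathcal{D}) = 0$ from \eqref{uninq_6}, which together rule out spurious local-time contributions from the free boundary.

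For $V_T \leq f$, I would show that the drift in the Itô identity is Lebesgue-a.e.\ non-positive. Inside $\mathcal{D}$, condition \eqref{uninq_2} gives $\mathcal{L}f = 0$. Outside $\mathcal{D}$ with $t < T$, \eqref{uninq_3_1} forces $f \equiv g$, so $\mathcal{L}f = \mathcal{L}g = \pi c_1'(t) - (1 - \pi)c_0'(t)$. Moreover, \eqref{uninq_3_1} combined with \eqref{uninq_4} yields $g \geq 0$ in the candidate stopping region (otherwise $f = g < 0 \leq g^+ = G$ would contradict $f \geq G$), i.e.\ $\pi/(1-\pi) \geq c_0(t)/c_1(t)$. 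A short manipulation combining this with \ref{reg_assm_3} and \ref{reg_assm_5} (the latter implying in particular $c_1'(t) < 0$ and $c_0'(t)/c_1'(t) < c_0(t)/c_1(t)$) then produces $\mathcal{L}g \leq 0$. After localizing $M$, taking expectations, and using $f \geq G \geq \widetilde{G}_T$ together with the terminal condition \eqref{uninq_3_2}, Proposition \ref{prop_gain_functions_equivalnce} delivers $V_T = \widetilde{V}_T \leq f$.

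For the reverse inequality I would deploy the stopping time $\tau_d \coloneqq \inf\{s \geq 0 : \Pi^\pi(s) \geq d(t + s)\} \wedge (T - t)$, which is an $\mathbb{F}$-stopping time because the lower semi-continuity of $d(\cdot)$ from \eqref{uninq_5} makes $\{(s, y) : y \geq d(t+s)\}$ closed. The drift vanishes on $[0, \tau_d)$ since the path remains in $\mathcal{D}$; at $\tau_d$, either $\tau_d < T - t$ and $\Pi^\pi(\tau_d) \geq d(t + \tau_d)$, so $f = g = g^+ = G$ (using again $g \geq 0$ in the stopping region), or $\tau_d = T - t$ and $f = G$ by \eqref{uninq_3_2}. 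In either case $f(t, \pi) = \ex[G(t + \tau_d, \Pi^\pi(\tau_d))] \leq V_T(t, \pi)$, completing the identification $f \equiv V_T$.

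The a.e.\ boundary identification then follows from what has already been shown. From $f \equiv V_T$ and \eqref{uninq_3_1} one obtains $\{\pi \geq d(t)\} \subseteq \mathcal{S}_T$, hence $b_T(t) \leq d(t)$ pointwise. Conversely, on the strip $\{(t,\pi) : b_T(t) \leq \pi < d(t)\}$ both $V_T = G = g$ and $\mathcal{L}V_T = 0$ hold; but $\mathcal{L}g = 0$ reduces to the algebraic relation $\pi \, c_1'(t) = (1 - \pi)\, c_0'(t)$, whose solution set at each $t$ is a single point and hence Lebesgue-null in $(t,\pi)$-space. Thus $b_T = d$ Lebesgue-a.e. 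The principal difficulty of the entire proof is the justification of Itô's rule across the non-smooth free boundary for a function that is merely $W^{1,2}_b$; condition \eqref{uninq_6} is exactly what makes this possible, and handling it rigorously is where most of the technical care will be concentrated.
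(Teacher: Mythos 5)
Your proposal is correct and follows essentially the same scaffolding as the paper's proof: mollify $f$ to get an It\^o identity justified by \eqref{uninq_1} and \eqref{uninq_6}, show the drift is non-positive on the stopping side using the sign of the generator applied to $g$, then use the hitting time of $\{\pi \ge d(t)\}$ together with \eqref{uninq_3_1}, \eqref{uninq_3_2}, \eqref{uninq_5} to get equality. Two small observations on the first inequality: the chain you sketch ($c_1'(t) < 0$ and $c_0'(t)/c_1'(t) < c_0(t)/c_1(t)$ via \ref{reg_assm_3}, \ref{reg_assm_5}, combined with $\pi/(1-\pi) \ge c_0(t)/c_1(t)$) in fact yields the \emph{strict} inequality $\mathcal{L}g < 0$ on the stopping side, matching the paper's (B.19); and the detour through $\widetilde{G}_T$ and Proposition \ref{prop_gain_functions_equivalnce} is harmless but unnecessary, since $f \ge G$ already suffices once the drift is signed.

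Where your argument genuinely diverges from the paper is the a.e.\ identification $d = b_T$. The paper proves this probabilistically: it applies the It\^o formula of Proposition \ref{ito_for_value_function} to $V_T$, subtracts expectations from the analogous identity for $f$ (which equals $V_T$), obtains that the expected time-integral of $\mathcal{L}g$ over the random strip $\{b_T(t+s) \le \Pi^\pi(s) \le d(t+s)\}$ vanishes, and then uses the strict negativity of $\mathcal{L}g$ together with the strictly positive density of $\Pi^\pi(s)$ to force the strip to be Lebesgue-null. You instead argue directly at the level of the PDE: on the strip $\{b_T(t) \le \pi < d(t)\}$ one has both $V_T = g$ (stopping region, and $g \ge 0$ there) and $\mathcal{L}V_T = 0$ (from \eqref{uninq_2} and $f \equiv V_T$), so that $\mathcal{L}g = 0$ a.e.\ on the strip by the Stampacchia-type fact that weak derivatives of $W^{1,2}$ functions agree a.e.\ on sets where the functions coincide; since $\{\mathcal{L}g = 0\}$ is the graph $\pi = c_0'(t)/(c_0'(t) + c_1'(t))$ (well defined because $c_1'(t) + c_0'(t) < 0$), a Lebesgue-null curve, the strip must be null, and Fubini gives $d = b_T$ a.e. Both routes are valid; yours avoids a second invocation of It\^o and keeps the final step purely analytic, while the paper's keeps everything in one probabilistic framework. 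The only input you use tacitly that deserves a sentence of justification is the Sobolev agreement-of-derivatives fact for $V_T = f \in W_b^{1,2}$ and the smooth $g$ on the measurable strip.
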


The proof is given in Appendix \ref{subsec_app_uniqueness_free_boundary}. The uniqueness of the value function is argued as in Theorem 7.12 of \cite{KarShr98}. To prove the uniqueness of the boundary, we provide additional arguments based on the strict negativity in the continuation region $\mathcal{C}_T$ of the generator of $\Pi^\pi(\cdot)$ applied to the gain function $G(\cdot, \cdot)$.

\begin{remark}
    When the optimal stopping boundary $b_T(\cdot)$ is continuous (as, e.g., in Proposition \ref{prop_boundary_continuity} below), we can further obtain that $d(\cdot)\equiv b_T(\cdot)$.  To wit, if $b_T(\cdot)$ is continuous and $d(\cdot)$ is lower semi-continuous then $\{t: b_T(t) < d(t)\}$ is open. Were it also non-empty, it would have positive Lebesgue measure in contradiction to Proposition \ref{prop_uniqueness_free_boundary}.
\end{remark}

\section{Study of the Stopping Boundary}\label{sec_boundary}

\subsection{Properties of the Boundary}\label{subsec_properties_of_the_boundary}

The following result is a direct consequence of Propositions \ref{prop_structure_of_regions}, \ref{prop_properties_of_value_func}, and the general theory of optimal stopping; see, e.g., Chapter I.2.2. of \cite{PesShi06} and in particular, Corollary 2.9, whose proof carries over to our infinite-horizon setup due to the boundedness of the gain function $G(\cdot, \cdot)$.

\begin{Prop}\label{prop_optimal_stoppinGime}
    For any $T \in [0, \infty]$, the stopping time $\tau^*_T$ of \eqref{optimal_stoppinGime} is optimal for the problem \eqref{value_function_general_osp}.
\end{Prop}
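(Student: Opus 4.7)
The plan is to invoke the standard machinery of optimal stopping theory for Markovian problems, as signaled by the authors in the preamble to the proposition. All the essential ingredients are already in place: the value function $V_T(\cdot,\cdot)$ is jointly continuous on $[0,T]\times[0,1]$ by Proposition \ref{prop_properties_of_value_func}; the gain function $G(\cdot,\cdot)$ is continuous and bounded via \ref{reg_assm_2} and the compactness of $[0,1]$; the process $\Pi^\pi(\cdot)$ is strong Markov by Lemma \ref{lem_properties_of_cond_prob_process}; and the stopping region $\mathcal{S}_T$ is closed as the zero level set of the continuous function $V_T-G$, so that the first-hitting time $\tau^*_T$ in \eqref{optimal_stoppinGime} is a bona fide $\mathbb{F}$--stopping time.

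The crux is to show that
\begin{equation*}
M(s) \coloneqq V_T\bigl(t + s \wedge \tau^*_T,\, \Pi^\pi(s \wedge \tau^*_T)\bigr), \quad s \in [0, T-t],
\end{equation*}
is a martingale. I would apply Itô's rule as justified by Proposition \ref{ito_for_value_function} to $V_T(t+\cdot, \Pi^\pi(\cdot))$ stopped at $\tau^*_T$. By Proposition \ref{prop_boundary_problem_value_func}, the drift term vanishes on $\mathcal{C}_T$ because $V_T$ satisfies the PDE \eqref{value_function_boundary_problem} there; since $\{s < \tau^*_T\}$ is contained in $\{(t+s, \Pi^\pi(s)) \in \mathcal{C}_T\}$, the stopped process reduces to its stochastic integral part, which is bounded (as $V_T$ is bounded on $[0,T]\times[0,1]$) and hence a true martingale. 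Continuity of $V_T$ and the definition of $\tau^*_T$ give $V_T = G$ at $\tau^*_T$, so optional sampling yields
\begin{equation*}
V_T(t,\pi) = \ex[M(0)] = \ex[M(T-t)] = \ex\bigl[G\bigl(t + \tau^*_T,\, \Pi^\pi(\tau^*_T)\bigr)\bigr].
\end{equation*}
Combined with the trivial upper bound $V_T(t,\pi) \ge \ex[G(t+\tau, \Pi^\pi(\tau))]$ for every $\tau \in \mathcal{T}_{T-t}$, this establishes optimality of $\tau^*_T$.

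The main (though modest) obstacle is the infinite-horizon case $T = \infty$, where $\tau^*_\infty$ may equal $\infty$ with positive probability. To handle this, I would pass to the limit as $s \to \infty$ in the identity $V_\infty(t,\pi) = \ex[M(s)]$, using (i) the martingale convergence theorem, which delivers almost-sure convergence of $\Pi^\pi(s)$ to a $\{0,1\}$-valued limit as $s \to \infty$; (ii) the continuity of the $c_i(\cdot)$ at $\infty$ under the one-point compactification of $[0,\infty)$, as discussed in the remark following \ref{reg_assm_5}; and (iii) dominated convergence, justified by the boundedness of $G$. These ingredients together yield the identity $V_\infty(t,\pi) = \ex[G(t + \tau^*_\infty, \Pi^\pi(\tau^*_\infty))]$ (with the convention $G(\infty, \Pi^\pi(\infty)) \coloneqq \lim_{s\to\infty} G(s, \Pi^\pi(s))$), closing the argument and completing the proof.
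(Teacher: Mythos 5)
Your proof is correct, but it takes a genuinely different route from the paper's. The paper dispatches this result in one sentence: it observes that the value function is continuous (Proposition \ref{prop_properties_of_value_func}), the stopping set is closed (Proposition \ref{prop_structure_of_regions}), and then appeals directly to the general theory, citing Corollary 2.9 of Peskir--Shiryaev, whose proof extends to $T=\infty$ by the boundedness of $G(\cdot,\cdot)$. Your argument instead makes the verification explicit: you invoke Proposition \ref{ito_for_value_function} to compute that $V_T(t+\cdot\wedge\tau^*_T, \Pi^\pi(\cdot\wedge\tau^*_T))$ is a bounded martingale (since the drift in \eqref{ito_equation_for_value_function} vanishes before $\tau^*_T$ and the stochastic integrand is bounded via \ref{value_func_Lipschitz_pi} of Proposition \ref{prop_properties_of_value_func}), apply optional sampling, and for $T=\infty$ pass to the limit with bounded convergence and the compactified-$[0,\infty]$ continuity of the $c_i(\cdot)$. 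What the paper's route buys is brevity and a lighter dependency footprint --- it needs only Propositions \ref{prop_structure_of_regions}, \ref{prop_properties_of_value_func} and an external reference. What your route buys is a self-contained verification internal to the paper, though at the cost of depending on heavier machinery (Proposition \ref{ito_for_value_function}, and through it the PDE of Proposition \ref{prop_boundary_problem_value_func} and the measure-zero result Lemma \ref{boundary_measure_zero}). Importantly, none of those results invoke Proposition \ref{prop_optimal_stoppinGime}, so no circularity arises, and your argument is sound.

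Two small remarks. First, when you assert $\{s<\tau^*_T\}\subseteq\{(t+s,\Pi^\pi(s))\in\mathcal{C}_T\}$, this requires $\Pi^\pi(s)\in(0,1)$; this holds a.s. for $\pi\in(0,1)$ since the boundary points are inaccessible in finite time, and the degenerate cases $\pi\in\{0,1\}$ are trivial. Second, your appeal to the $\{0,1\}$-valued limit of $\Pi^\pi(s)$ is not strictly needed --- the joint continuity of $V_\infty(\cdot,\cdot)$ on the compactified domain already handles $M(s)\to V_\infty(\infty,\Pi^\pi(\infty))=G(\infty,\Pi^\pi(\infty))$ on $\{\tau^*_\infty=\infty\}$ regardless of the value of $\Pi^\pi(\infty)$ --- but the observation is correct and does no harm.
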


We show in this section that the stopping boundary satisfies a particular integral equation. To do that, we shall need to apply Itô's rule to $V_T(t + \cdot, \Pi^\pi(\cdot))$; this, in turn, will require establishing the following result.

\begin{Lem} \label{boundary_measure_zero}
    For any $T \in [0, \infty)$, the boundary $\partial \mathcal{C}_T$ of the continuation region $\mathcal{C}_T$ has zero Lebesgue measure as long as $c_i(\cdot), \, i = 0, 1$, are of class $C^2$ on the closed interval $[0, T]$ and $c_i(T) > 0$, $i = 0,1$.
\end{Lem}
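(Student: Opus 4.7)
The plan is to turn the 2D measure of $\partial \mathcal{C}_T$ into a one-dimensional integral via Fubini, recognize the integrand as the defect of upper semicontinuity of $b_T$, and then invoke the American-option framework from Subsection \ref{subsec_american_options_formulation} to conclude.

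First, I would describe $\partial \mathcal{C}_T$ explicitly. Since $\mathcal{C}_T$ is open and $b_T(\cdot)$ is lower semicontinuous (Remark \ref{remark_lsc}), setting $b_T^*(t) := \limsup_{s \to t} b_T(s)$, a short topological argument shows that inside the open cell $(0, T) \times (0, 1)$,
\begin{equation*}
    \overline{\mathcal{C}_T} = \{(t, \pi) : \pi \le b_T^*(t)\}, \qquad \partial \mathcal{C}_T = \{(t, \pi) : b_T(t) \le \pi \le b_T^*(t)\}.
\end{equation*}
The remaining part of $\partial \mathcal{C}_T$ lies on the four line segments bounding $[0, T] \times [0, 1]$, which are 2D-null. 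Fubini then gives
\begin{equation*}
    \mathrm{Leb}(\partial \mathcal{C}_T) = \int_0^T \big(b_T^*(t) - b_T(t)\big) \, dt.
\end{equation*}
Since $b_T$ is LSC, the integrand is nonnegative and vanishes precisely where $b_T$ is continuous, so the claim reduces to showing that $b_T(\cdot)$ is continuous at almost every $t \in [0, T]$.

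Next I would transfer the problem to the American call formulation from Subsection \ref{subsec_american_options_formulation}. Under $c_i \in C^2([0, T])$ and $c_i(T) > 0$, the change of variables $y(t, \pi) = \log(\pi c_1(t)/((1-\pi) c_0(t)))$ from \eqref{change_of_initial_position} is $C^1$ on $[0, T] \times (0, 1)$ and strictly increasing in $\pi$, so the exercise boundary $y^*(t) := y(t, b_T(t))$ of $u_T(\cdot, \cdot)$ shares the same continuity set as $b_T(\cdot)$. The hypotheses (H1)--(H6) of \cite{Lamberton} have already been verified in the proof of Proposition \ref{prop_C1_value_func}, so $u_T \in W^{1, 2}_{\mathrm{loc}}$, its spatial derivative $\partial_y u_T$ is jointly continuous, and by Theorem 2.6 of \cite{Blanchet}, $u_T$ solves the associated variational inequality in the strong sense. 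Moreover, Theorem 1.1 of \cite{Blanchet} provides the almost-everywhere continuity of $\partial_t u_T$ across the exercise boundary.

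Finally, I would rule out jumps of $y^*$ on a set of positive measure. For a candidate jump point $t_0$ with $y_0 \in (y^*(t_0), y^{**}(t_0))$ (where $y^{**}$ denotes the upper envelope), on the one hand $(t_0, y_0)$ is in the contact set so that $u_T(t_0, y_0) = e^{y_0} - 1$ and smooth fit gives $\partial_y u_T(t_0, y_0) = e^{y_0}$; on the other hand, choosing $t_n \to t_0$ with $(t_n, y_0) \in \mathcal{C}_T$, strict convexity of $u_T(t_n, \cdot)$ in the continuation region (inherent in $\partial_{yy} u_T > 0$ there) together with the joint continuity of $\partial_y u_T$ forces $\partial_y u_T(t_n, y_0) < e^{y^*(t_n)}$. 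Combining this with the a.e. continuity of $\partial_t u_T$ across the boundary and Lebesgue density arguments pins the set of such $t_0$ to have measure zero; pulling back through the smooth bijection $y(t, \cdot)$ yields the conclusion for $b_T$.

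I expect the main obstacle to be this last step: neither Theorem 3.6 of \cite{Lamberton} nor Theorem 1.1 of \cite{Blanchet} directly asserts almost-everywhere continuity of the free boundary itself, only regularity of the value function and of $\partial_t u_T$ across the boundary. Making the convexity/smooth-fit contradiction quantitative — so that the defect $b_T^* - b_T$ cannot persist on a set of positive measure — is delicate, and will likely require a careful local bootstrapping argument exploiting the $W^{1, 2}_{\mathrm{loc}}$ regularity of $u_T$ together with the non-degeneracy of the diffusion coefficient $\tfrac{a^2}{2}\pi^2(1-\pi)^2$ on any compact subset of $(0, 1)$.
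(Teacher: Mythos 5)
Your reduction via Fubini to the almost-everywhere continuity of $b_T(\cdot)$ is sound (modulo verifying that $b_T^*$ is measurable and that the bounding segments of the box are 2D-null, both routine), but it is an unnecessary detour and, more importantly, shifts the burden onto a statement that you then cannot close. The convexity/smooth-fit contradiction you sketch in the last paragraph is exactly the mechanism used in the proof of Proposition \ref{prop_boundary_continuity} of the paper, and there it is crucial that $\partial_t u_T \le 0$ holds throughout the continuation region --- this sign is what lets the PDE be rearranged to give a quantitative lower bound on $\partial_{yy} u_T$ in the manner of \eqref{w_zz_bound_preliminary}--\eqref{w_zz_final_bound}. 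That sign, in turn, is a consequence of the monotonicity assumptions \ref{bdy_assm}--\ref{bdy_assm_3} (it is proved in Proposition \ref{prop_boundary_monotonicity}, not in general). Under the hypotheses of Lemma \ref{boundary_measure_zero}, only \ref{reg_assm_1}--\ref{reg_assm_5} and $c_i\in C^2([0,T])$, $c_i(T)>0$ are available, so you have no way to make the ``strict convexity'' quantitative, and as you yourself write, the inequality $\partial_y u_T(t_n, y_0) < e^{y^*(t_n)}$ passes harmlessly to $e^{y_0} \le e^{y^*(t_0^{**})}$ in the limit without yielding a contradiction. The appeal to ``a.e.\ continuity of $\partial_t u_T$ and Lebesgue density arguments'' is a placeholder, not an argument: $\partial_t u_T$ being a.e.\ continuous across the boundary does not by itself prevent the free boundary from having a jump set of positive measure.

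The paper's proof avoids the one-dimensional reduction entirely and reads the conclusion off an existing PDE result. It passes to the American-call picture, observes that the transformed boundary $\widecheck{b}_T(\cdot)$ is $\geq 0$ (so $\partial\widecheck{\mathcal{C}}_T \subseteq [0,T]\times[0,\infty)$), localizes to rational parabolic cylinders $Q_R(t_0,x_0)\subseteq(0,T)\times(0,\infty)$, and then shows that $h(t,x):=u_T(T-t,x)-(e^x-1)$ satisfies, Lebesgue-a.e., the parabolic obstacle structure of Equation (2.1) in \cite{Blanchet} with a source term $(t,x)\mapsto -e^x\beta_1(t)+\beta_0(t)$ that is differentiable and, on each such cylinder, bounded below by a positive constant $\delta_0(t_0,R)$ --- precisely the non-degeneracy hypothesis of Theorem 2.6 of \cite{Blanchet}, which then delivers that the free boundary has zero Lebesgue measure. (Note that the restriction to $x\ge0$ is exactly what makes $-e^x\beta_1(t)\ge-\beta_1(t)$ and hence the source $\ge\beta_0(t)-\beta_1(t)>0$ by \ref{reg_assm_5}; your proposal does not use this and so cannot set up the non-degeneracy.) In short: you identified the right regularity theory to invoke (Lamberton, Blanchet), but you reached for the wrong theorem in \cite{Blanchet} --- Theorem 2.6, not Theorem 1.1, is the workhorse --- and the substitute argument you propose requires the monotonicity hypotheses of Section \ref{subsec_boundary_monotonicity}, which are strictly stronger than what the lemma assumes.
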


The technical proof of Lemma \ref{boundary_measure_zero} is relegated to Appendix \ref{subsec_app_measure_zero}.

\begin{remark}
    Boundaries of optimal stopping problems often have nice monotonicity properties. In other cases, it is possible to prove the boundary's continuity (see, e.g., recent developments in \cite{DeAngelis15}, \cite{DeAngPes20}, \cite{DeAngelisStabile}). In each of these scenarios, the statement of Lemma \ref{boundary_measure_zero} follows immediately. However, as one can see from the examples in Subsection \ref{subsec_examples}, the boundary behavior in our setting can be very complicated, and even its continuity is hard to establish (even though we conjecture that the boundary should be continuous as a consequence of the $C^2$ regularity of the discount functions). Thus, establishing Lemma \ref{boundary_measure_zero} is indeed essential to proving the 
    applicability of Itô's lemma.
    
\end{remark}

We are finally ready to show that the function $b_T(\cdot)$ satisfies an integral equation.

\begin{Prop}\label{prop_boundary_equation}
    For any $T \in [0, \infty]$, the boundary function $b_T(\cdot)$, defined in         \eqref{boundary_definition}, satisfies the  integral equation \eqref{integral_equation_for_boundary}.
\end{Prop}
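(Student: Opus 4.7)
\begin{Proof} (Proof proposal.)
The plan is to apply the Itô expansion of Proposition~\ref{ito_for_value_function} to the process $V_T(t+\cdot, \Pi^{b_T(t)}(\cdot))$, started from the boundary point $\pi = b_T(t)$, and then take expectations to eliminate the martingale term. Fixing $t \in [0, T)$ and writing $\pi^* = b_T(t)$ for brevity, Proposition~\ref{ito_for_value_function} applied on the interval $[0, s]$ with $s \in [0, T-t]$ yields
\begin{equation*}
    V_T\big(t+s, \Pi^{\pi^*}(s)\big) = V_T(t, \pi^*) + \int_0^s \mathbf{1}_{\{\Pi^{\pi^*}(u) \ge b_T(t+u)\}} \Big(c_1'(t+u)\Pi^{\pi^*}(u) - c_0'(t+u)\big(1-\Pi^{\pi^*}(u)\big)\Big)\,du + M(s),
\end{equation*}
where $M(s)$ is the stochastic integral term. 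Since $\partial_\pi V_T$ is bounded (by $c_1(t)+c_0(t)$, via Proposition~\ref{prop_properties_of_value_func}\ref{value_func_Lipschitz_pi}) and $\Pi^{\pi^*}(1-\Pi^{\pi^*}) \le 1/4$, the integrand in $M(\cdot)$ is uniformly bounded and $M(\cdot)$ is a genuine martingale on any finite subinterval of $[0, T-t]$, whence $\ex[M(s)] = 0$.

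The key step is to take $s = T-t$ (in the finite-horizon case) and take expectations. On the left-hand side the terminal condition $V_T(T, \cdot) \equiv G(T, \cdot)$ produces $\ex[G(T, \Pi^{\pi^*}(T-t))]$, while on the right-hand side continuity of $V_T$ and the definition of $\mathcal{S}_T$ give $V_T(t, \pi^*) = G(t, \pi^*)$. A preliminary step I would verify first is that $b_T(t) \ge c_0(t)/(c_0(t)+c_1(t))$, so that $G(t, \pi^*) = c_1(t)\pi^* - c_0(t)(1-\pi^*) = \pi^*(c_1(t)+c_0(t)) - c_0(t)$, matching the left-hand side of \eqref{integral_equation_for_boundary}. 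This inequality follows from the observation that when $\pi < c_0(t)/(c_0(t)+c_1(t))$ the gain $G(t, \pi)$ vanishes, whereas the process $\Pi^\pi(\cdot)$ has positive probability of entering regions where $G > 0$, yielding strictly positive value and placing such $\pi$ in the continuation region $\mathcal{C}_T$. Rewriting the drift integrand as $(c_1'(t+u)+c_0'(t+u))\Pi^{\pi^*}(u) - c_0'(t+u)$ and applying Fubini, one obtains exactly \eqref{integral_equation_for_boundary}.

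For the infinite-horizon case $T = \infty$, I would apply the argument above with $s = T'-t$ for arbitrary finite $T' > t$ and let $T' \uparrow \infty$. The integrand in the $du$-term is dominated by $|c_1'(t+u)| + |c_0'(t+u)|$, which is integrable on $[0, \infty)$ since $c_i$ is non-increasing with a finite limit $c_i(\infty)$ (so the total variation $\int_0^\infty |c_i'(u)|\,du = c_i(0)-c_i(\infty)$ is finite), so dominated convergence delivers the integral on the right-hand side of \eqref{integral_equation_for_boundary}. For the terminal term, $G(T', \Pi^{\pi^*}(T'-t))$ converges a.s. to $G(\infty, \Pi^{\pi^*}(\infty))$ using continuity of $G(\cdot, \pi)$ on $[0, \infty]$ from \ref{reg_assm_2} and the almost-sure existence of $\Pi^{\pi^*}(\infty) = \lim_{u \to \infty}\Pi^{\pi^*}(u)$ by martingale convergence; uniform boundedness of $G$ then gives the required convergence in expectation.

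The main technical obstacle will be checking the preliminary inequality $b_T(t) \ge c_0(t)/(c_0(t)+c_1(t))$, which is needed to identify $G(t, b_T(t))$ with the (possibly-signed) expression $b_T(t)(c_1(t)+c_0(t)) - c_0(t)$ appearing on the left of \eqref{integral_equation_for_boundary}; the cleanest justification uses that $V_T > G$ on $\mathcal{C}_T$ combined with a direct comparison to a suboptimal stopping rule that waits until $\Pi^\pi$ enters $[c_0/(c_0+c_1), 1]$. All other steps are routine: integrability of the stochastic-integral integrand for the martingale property, uniform integrability for the $T' \uparrow \infty$ passage, and an application of Fubini that is licensed by the boundedness of the integrand times the finite variation of the $c_i$'s.
\end{Proof}
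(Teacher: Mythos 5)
Your proposal is correct and follows essentially the same route as the paper's proof: apply the Itô expansion from Proposition~\ref{ito_for_value_function} at $\pi = b_T(t)$, take expectations so the $L^2$-bounded stochastic integral vanishes, use $V_T(t, b_T(t)) = G(t, b_T(t))$ and $V_T(T, \cdot) = G(T, \cdot)$, then Fubini and pass to the horizon limit with dominated convergence (the $c_i$ having finite total variation). Your ``preliminary step'' verifying $b_T(t) \ge c_0(t)/(c_0(t)+c_1(t))$ is a valid justification for dropping the positive part in $G(t, b_T(t))$, and is indeed established by the same positivity argument for $V_T$ that the paper records in the proof of Proposition~\ref{prop_structure_of_regions}; one small notational slip in your infinite-horizon passage is that the prelimit terminal term is $V_\infty(T', \Pi^{\pi^*}(T'-t))$ (not $G(T', \cdot)$), and its convergence to $G(\infty, \Pi^{\pi^*}(\infty))$ should be argued via the uniform continuity of $V_\infty$ on the compactified time interval together with $V_\infty(\infty, \cdot) = G(\infty, \cdot)$.
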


\begin{proof}
    Fix $T \in [0, \infty], \, t \in [0, T)$, $s \in [0, T-t)$, and $\pi \in (0, 1)$. Recall Itô's formula \eqref{ito_equation_for_value_function} and note that the last term on the right-hand side is a martingale, bounded in $L^2$ over the time horizon $[0,s]$ because the integrand process is bounded and $s$ is finite. This is a consequence of the boundedness of the process $\Pi^\pi(\cdot)$ and of the spatial derivative of the value function, as implied by Proposition \ref{prop_C1_value_func} and Proposition \ref{prop_properties_of_value_func}. Taking expectations on both sides of \eqref{ito_equation_for_value_function}, and invoking the optional sampling theorem for martingales which are bounded in $L^2$, we obtain 
    \begin{equation*}
            \ex\big[ V_T(t+s,  \Pi^\pi(s))\big] 
            = V_T(t, \pi) 
            + 
            \ex\left[
                \int_0^{s} \mathbf{1}_{\{\Pi^\pi(u) \ge b_T(t+u) \}} \Big(c_1'(t+u)\Pi^\pi(u) -c_0'(t+u)(1-\Pi^\pi(u)) \Big)\, du 
            \right].
    \end{equation*}
    Recall that at $\pi = b_T(t)$, $V_T(t, \pi) = G(t, \pi) = c_1(t)\pi - c_0(t)(1-\pi)$, and $V_T(T, \pi) = G(T, \pi)$ for $\pi \in (0, 1)$. For the case $T < \infty$, the integral equation \eqref{integral_equation_for_boundary} follows immediately by applying Fubini's theorem (using the boundedness of all terms under the integral), plugging in $\pi = b_T(t)$, and letting $s \uparrow T-t$. For the infinite time horizon, we can do the same by noting that the absolute value of the integrand in the Lebesgue integral above is bounded by $-c_0'(t+\cdot) - c_1'(t+\cdot)$, which is integrable over $[0, \infty]$ because $c_i(\infty), \, i = 0, 1$, are well defined. The result then follows by the dominated convergence theorem, Fubini's theorem, and letting $s \uparrow \infty$. This concludes the proof.
\end{proof}

Combining all of the above results, we obtain the statement of Theorem \ref{main_result}.

\subsection{Sufficient Conditions for Monotonicity of the Boundary under an Appropriate Transformation}\label{subsec_boundary_monotonicity}

As mentioned already, the stopping boundary of our problem need not possess a nice structure. However, the established equivalence to the appropriate American call option allows us to obtain sufficient conditions for the boundary to be continuous and monotone under a corresponding explicit transformation. Choosing a suitable transformation to induce boundary monotonicity was also useful in the study of variable annuities in \cite{de2024variable}.  We shall impose the following assumptions on the functions $\beta_0(\cdot)$ and $\beta_1(\cdot)$ of \eqref{definition_beta_functions}:

\begin{enumerate}[leftmargin=35pt, label = \textbf{(B\arabic*)}]
    \item The functions $\beta_0(\cdot)$ and $\beta_1(\cdot) - \beta_0(\cdot)$ are non-increasing; \label{bdy_assm}
    \item The function $\beta_0(\cdot)$ is negative (equivalently, the function $c_0(\cdot)$ is strictly decreasing). \label{bdy_assm_2}
\end{enumerate}

In addition, we make the following standing assumption for this section, which is a slight modification of the assumption \ref{reg_assm_1}:

\begin{enumerate}[leftmargin=35pt, label = \textbf{(B\arabic*)}]
\setcounter{enumi}{2}
    \item $c_i(T) > 0$ for both $i = 0,1$. \label{bdy_assm_3}
\end{enumerate}

\begin{remark}
    Note that \ref{bdy_assm_2} resembles the assumption \ref{reg_assm_5}, which can be equivalently stated as follows: the function $\beta_1(\cdot) - \beta_0(\cdot)$ is negative. This way, under \ref{bdy_assm_2}, we add a monotonicity assumption to the already required negativity one.
\end{remark}

\begin{remark}
    The results of this subsection, i.e., Propositions \ref{prop_boundary_monotonicity} and \ref{prop_boundary_continuity}, may be obtained under the following alternative assumptions:
        \begin{enumerate}[leftmargin=35pt, label = \textbf{(B\arabic*)$^\prime$}]
    	\item The functions $\beta_1(\cdot)$ and $\beta_1(\cdot) - \beta_0(\cdot)$ are non-increasing; \label{bdy_assm_alt}
        \item The function $\beta_1(\cdot)$ is negative (equivalently, the function $c_1(\cdot)$ is strictly decreasing). \label{bdy_assm_2_alt} 
    \end{enumerate}
    This is due to the arguments of the Remark \ref{American_put_remark}. Since the proofs are identical up to the corresponding change of notation, they are left to the reader.
\end{remark}

\begin{Prop}\label{prop_boundary_monotonicity}
    Fix time horizon $T \in [0, \infty)$ and assume \ref{bdy_assm}, \ref{bdy_assm_3}. Then, the function 
    \begin{equation}\label{bijection_between_stopping_boundaries}
        \widecheck{b}_T(t) \coloneqq \log \left( \frac{c_1(t)}{c_0(t)}\frac{b_T(t)}{(1-b_T(t))} \right), \quad 0 \le t \le T,
    \end{equation}
    is non-increasing.
\end{Prop}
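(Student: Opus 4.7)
The plan is to leverage the American-option reformulation of Subsection \ref{subsec_american_options_formulation}, which applies here thanks to the standing assumption \ref{bdy_assm_3}. Under that reformulation (see \eqref{value_function_sup_american_option}--\eqref{american_call_value_fn}), the map $\pi \mapsto y(t, \pi)$ defined in \eqref{change_of_initial_position} is strictly increasing in $\pi$ for each fixed $t$, and the equivalence $V_T(t, \pi) = G(t, \pi) \iff u_T(t, y(t, \pi)) = \psi(y(t, \pi))$ holds on $(0, 1)$. Consequently $\widecheck{b}_T(t) = y(t, b_T(t))$ is precisely the optimal stopping boundary of the American-option problem expressed in $y$-coordinates, and it suffices to show monotonicity of that boundary.

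The core step is to establish that $t \mapsto u_T(t, y)$ is non-increasing for every fixed $y \in \mathbb{R}$. By conditioning through the Brownian increment $\widehat B(u) \coloneqq \overline B(t+u) - \overline B(t)$, one rewrites
\begin{equation*}
u_T(t, y) = \sup_{\tau \in \mathcal{T}_{T-t}} \overline{\ex}\Bigl[ e^{\int_0^\tau \beta_0(t+u)\,du} \, \psi\Bigl( y + \int_0^\tau \bigl(\beta_1(t+v) - \beta_0(t+v) - \tfrac{a^2}{2}\bigr)\,dv + a\,\widehat B(\tau) \Bigr) \Bigr].
\end{equation*}
Fix $t_1 < t_2$ and any $\tau$ admissible for $u_T(t_2, y)$; since $T - t_1 \ge T - t_2$, the same $\tau$ is admissible for $u_T(t_1, y)$. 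Assumption \ref{bdy_assm} supplies the pointwise inequalities $\beta_0(t_1 + \cdot) \ge \beta_0(t_2 + \cdot)$ and $(\beta_1 - \beta_0)(t_1 + \cdot) \ge (\beta_1 - \beta_0)(t_2 + \cdot)$, and combined with the monotonicity of $\psi(\cdot) = (e^{\cdot} - 1)^+$, these give a pathwise comparison in which every factor of the integrand at $t_1$ dominates the corresponding factor at $t_2$. Taking expectations and then supremum over $\tau$ yields $u_T(t_1, y) \ge u_T(t_2, y)$.

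Since $u_T(\cdot, y)$ is non-increasing in $t$ and the payoff $\psi$ is independent of $t$, the stopping region $\mathcal{R} \coloneqq \{(t, y) : u_T(t, y) = \psi(y)\}$ is forward-closed in $t$: if $(t_1, y) \in \mathcal{R}$ and $t_2 > t_1$, then $\psi(y) \le u_T(t_2, y) \le u_T(t_1, y) = \psi(y)$, so $(t_2, y) \in \mathcal{R}$. Equivalently, $\{y : y \ge \widecheck{b}_T(t)\}$ grows with $t$, which forces $\widecheck{b}_T(\cdot)$ to be non-increasing, as claimed.

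I do not foresee a serious obstacle: the argument is essentially a pathwise comparison combined with the standard monotonicity transfer from value function to boundary. The main subtlety worth checking carefully is the clean identification of the $u_T$-stopping region with the half-line $\{y \ge \widecheck{b}_T(t)\}$ (so that no pathological behavior occurs at very negative $y$, where $\psi$ vanishes); this follows from \ref{bdy_assm_3}, the strict monotonicity of $y(t, \cdot)$, and the form of the stopping region given by Proposition \ref{prop_structure_of_regions}.
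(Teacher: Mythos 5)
Your proposal is correct and follows essentially the same route as the paper: both use the American-option reformulation, reduce the claim to showing $t\mapsto u_T(t,y)$ is non-increasing, establish that via the shrinking set of admissible stopping times together with a pathwise comparison driven by the monotonicity assumptions in \ref{bdy_assm}, and then transfer to the boundary by the time-homogeneity of $\psi(\cdot)$. The only cosmetic difference is that you carry out the pathwise comparison by hand (using that the diffusion coefficient is constant so the state process is Brownian motion plus deterministic drift), whereas the paper invokes the general comparison principle of Proposition 5.2.18 in \cite{BMSC}.
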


\begin{proof}
    The proof relies on the equivalent reformulation of the original stopping problem in terms of American options as in Subsection \ref{subsec_american_options_formulation}. Using the notation there, we recall that the value function $V_T(\cdot, \cdot)$ can be represented as
    $V_T(t, \pi) = (1 - \pi) \, c_0(t) \, u_T(t, y(t, \pi))$, where $u_T(\cdot, \cdot)$ is the value function of the associated American call option stopping problem in \eqref{value_function_sup_american_option}--\eqref{american_call_value_fn}. In particular, it is straightforward to verify that the optimal stopping time of the American call is the first hitting time of the boundary $\widecheck{b}_T: [0, T] \to \mathbb{R}$ of \eqref{bijection_between_stopping_boundaries}. As a result, in order to obtain the statements of Proposition \ref{prop_boundary_monotonicity}, it suffices to show that the corresponding American option stopping problem admits a non-increasing stopping boundary.
    
    Recall that the value function $u_T(\cdot, \cdot)$ can be defined as
    \begin{equation}\label{american_option_value_function}
            u_T(t, y) = \sup_{\tau \in \mathcal{T}_{T - t}} \ex \left[
            e^{\int_t^{t + \tau} \beta_0(s) \, ds} \, \psi\big(Y^{t, y}(t + \tau)\big)
        \right]
    \end{equation}
    for the payoff function $\psi(y) = (e^y-1)^+$ of \eqref{change_of_initial_position} and the process
    \begin{equation}\label{price_dynamics_monotonicity_proof}
        Y^{t,y}(s) \coloneqq y + \int_t^s \left(\beta_1(v) - \beta_0(v) - \frac{a^2}{2} \right) dv + \int_t^s a \, d\widetilde{B}(v), \quad t \le s \le T,
    \end{equation}
    with $\widetilde{B} \coloneqq \{\widetilde{B}(t), 0 \le t \le T\}$ a standard Brownian motion. Moreover, note that the gain function for the American option is time-homogeneous. Therefore, to show that the boundary is monotone, it suffices to show that, under the assumptions of Proposition \ref{prop_boundary_monotonicity}, the function $u_T(\cdot, y)$ is non-increasing for any $y \in \mathbb{R}$. This, in turn, follows from the following observations.

    First, if $t$ increases, the supremum in \eqref{american_option_value_function} is taken over a smaller set, which can only decrease the value. Second, the assumptions on monotonicity of the functions $\beta_0(\cdot)$ and $\beta_1(\cdot) - \beta_0(\cdot)$ imply that if the time parameter $t$ increases, both the interest rate and the drift of the corresponding price process under the expectation of \eqref{american_option_value_function} decrease, where the latter, in turn, implies that the corresponding price process started at a later time will almost surely be below the earlier one, by the comparison principle of Proposition 5.2.18 in \cite{BMSC}. Consequently, the value of the problem can also only decrease, which implies the non-increasing property of the boundary $\widecheck{b}_T(\cdot)$.
\end{proof}

\begin{Prop}\label{prop_boundary_continuity}
    Fix the time horizon $T \in [0, \infty)$ and assume \ref{bdy_assm}--\ref{bdy_assm_3}. Then the boundary $b_T(\cdot)$ is continuous on $[0, T)$ with $\lim\limits_{t \to T} b_T(t) = c_0(T) / (c_0(T) + c_1(T))$.
\end{Prop}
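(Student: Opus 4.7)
\begin{Proof}

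My plan is to work with the transformed boundary $\widecheck{b}_T$ of \eqref{bijection_between_stopping_boundaries} and to exploit the American-call formulation from Subsection \ref{subsec_american_options_formulation}. Since $\pi \mapsto y(t,\pi)$ from \eqref{change_of_initial_position} is a time-continuous family of homeomorphisms from $(0,1)$ onto $\mathbb{R}$ under \ref{bdy_assm_3}, continuity of $b_T$ on $[0,T)$ and the identification of its terminal limit reduce, respectively, to continuity of $\widecheck{b}_T$ on $[0,T)$ and to $\widecheck{b}_T(t) \to 0$ as $t \to T$. By Proposition \ref{prop_boundary_monotonicity}, $\widecheck{b}_T$ is non-increasing, and by Remark \ref{remark_lsc} (transferred through the homeomorphism) it is also lower semi-continuous, hence right-continuous. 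The substantive content is therefore left-continuity together with the terminal identification, both of which I intend to obtain from a single It\^o-based short-time expansion.

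The key identity, produced by applying It\^o's formula to the discounted payoff $e^{-\int_0^u r(t+s)ds}\psi\bigl(Y^{t,y}(t+u)\bigr)$ up to the optimal stopping time $\tau^*$ and invoking the dynamic programming principle, reads: for any $(t, y)$ with $t < T$ and $y > 0$,
\[
u_T(t, y) - \psi(y) \;=\; \ex\!\left[\int_0^{\tau^*} e^{-\int_0^u r(t+s)\, ds}\, H\bigl(t+u,\, Y^{t, y}(t+u)\bigr)\, du\right],
\]
where $r = -\beta_0$ and $H(s, y) \coloneqq (\partial_t + L - r)\psi(s, y) = \beta_1(s) e^y - \beta_0(s)$. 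The function $H(s, \cdot)$ is strictly decreasing in $y$, since $\partial_y H(s, y) = \beta_1(s) e^y < 0$ by \ref{bdy_assm_2} and \ref{reg_assm_5}.

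For left-continuity I argue by contradiction: suppose $\widecheck{b}_T(t_0^-) > \widecheck{b}_T(t_0)$ at some $t_0 \in (0, T)$ and pick $y^* \in (\widecheck{b}_T(t_0), \widecheck{b}_T(t_0^-))$. By monotonicity of $\widecheck{b}_T$, $(s, y^*)$ lies in the stopping region for every $s \geq t_0$, so path continuity of $Y^{t, y^*}$ forces $\tau^* \to 0$ in $L^1$ as $t \uparrow t_0$. The supermartingale condition at $(t_0, \widecheck{b}_T(t_0))$ yields $H(t_0, \widecheck{b}_T(t_0)) \leq 0$, and the strict monotonicity of $H$ in $y$ then gives $H(t_0, y^*) < 0$; by continuity, $H$ is uniformly strictly negative on a space-time neighbourhood of $(t_0, y^*)$. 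For $t$ close enough to $t_0$ the right-hand side of the identity is thus strictly negative, contradicting $u_T(t, y^*) > \psi(y^*)$. An analogous argument handles the terminal identification: for any $\epsilon > 0$, the inequality $e^\epsilon > 1 > \beta_0(T)/\beta_1(T)$ (recalling $\beta_0, \beta_1 < 0$ with $\beta_0 > \beta_1$) gives $H(T, \epsilon) < 0$; if $(t_n, \epsilon) \in \mathcal{C}_T$ along a sequence $t_n \uparrow T$, the identity with $\tau^* \to 0$ again produces a strictly negative right-hand side against a strictly positive left-hand side. Hence $\widecheck{b}_T(t) \leq \epsilon$ for $t$ close to $T$, which together with $\widecheck{b}_T \geq 0$ yields $\widecheck{b}_T(t) \to 0$, equivalently $b_T(t) \to c_0(T)/(c_0(T) + c_1(T))$.

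The main obstacle will be making the short-time expansion quantitatively rigorous: it requires verifying that $\tau^* \to 0$ appropriately in each of the two limits $t \uparrow t_0$ and $t_n \uparrow T$, controlling the integrand uniformly, and justifying the dynamic programming identity and It\^o's formula at the regularity supplied by Propositions \ref{prop_C1_value_func} and \ref{ito_for_value_function}. The remaining pieces, namely the reduction to $\widecheck{b}_T$ and the right-continuity, then follow essentially by inspection.

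\end{Proof}
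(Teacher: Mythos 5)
You take a genuinely different route from the paper. The paper passes to exponential coordinates $w_T(t,z)=u_T(t,\log z)$, uses the free-boundary PDE together with the signs $\partial_t w\le 0$, $\partial_z w\ge 0$ to produce a \emph{deterministic} lower bound on $\partial_{zz}w$ in the continuation region, and then integrates this bound twice in $z$ (smooth fit kills the first-order term at the boundary) to obtain a strictly positive lower bound on $w-\varphi$ at a fictitious jump point; letting $s\uparrow t_0$ contradicts continuity of $w$. You instead try to write $u_T-\psi$ as a Dynkin-type integral of the (parabolic) generator applied to $\psi$ and to argue from the sign of that integrand.

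There is a genuine gap. The stated identity
\[
u_T(t,y)-\psi(y)=\ex\!\left[\int_0^{\tau^*}e^{-\int_0^u r(t+s)\,ds}\,H\bigl(t+u,Y^{t,y}(t+u)\bigr)\,du\right]
\]
cannot hold as written, because $\psi(y)=(e^y-1)^+$ has a kink at $y=0$ and is therefore not in the domain of the generator. Applying Itô--Tanaka instead of Itô does two things: it replaces $H$ by $H\,\mathbf{1}_{\{Y>0\}}$ in the time integral, and it adds a \emph{non-negative} local-time term at the level $y=0$. Since the left-hand side of any correct version of the identity equals $u_T(t,y^*)-\psi(y^*)>0$ whenever $(t,y^*)$ lies in the continuation region, the conclusion ``for $t$ close enough to $t_0$ the right-hand side is strictly negative'' is logically impossible — the omitted local-time term is precisely the positive contribution that keeps the balance. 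Whether that positive term is dominated, as $t\uparrow t_0$, by the negative bulk of $\int H\,du$ is exactly the quantitative content that would have to be supplied; it is not, and it is non-trivial. A second issue compounds this: $H(s,y)=\beta_1(s)e^y-\beta_0(s)$ is \emph{positive} for $y<\log\bigl(\beta_0(s)/\beta_1(s)\bigr)<0$, a region that lies inside the continuation set, so the integrand is not uniformly negative along paths that escape a fixed neighbourhood of $(t_0,y^*)$; those escapes also contribute positively and are not controlled. You flag ``making the short-time expansion quantitatively rigorous'' as the main obstacle, but the trouble begins earlier, at the identity itself. If you want to persist along this route you would need to (i) use Itô--Tanaka, (ii) display the local-time and escape contributions explicitly, and (iii) show they are $o(\ex[\tau^*])$ while the main term behaves like $-c\,\ex[\tau^*]+o(\ex[\tau^*])$ for some $c>0$ — short-time estimates on expected local time and excursion probabilities from a strictly positive starting point. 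This can likely be pushed through, but it is substantially more delicate than the paper's purely analytic bound on $\partial_{zz}w$, which avoids all of this bookkeeping. The reduction to $\widecheck{b}_T$, the right-continuity from monotonicity plus lower semi-continuity, and the formula for the terminal limit are all fine.
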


\begin{proof}
    The idea of the proof is taken from \cite{EksVai15} and can be formulated heuristically as follows. If the boundary of the optimal stopping problem is monotone, then the so-called \emph{smooth-fit} principle holds; that is, the value function is of class $C^1$ in the spatial argument. If, in addition, the second spatial derivative of the value function is appropriately bounded from below in the continuation region, then the boundary must be continuous.
    
    To obtain this, we again place ourselves in the framework for American options and proceed as follows.
    We fix $T \in [0, \infty)$ and define the functions $w_T: [0, T] \times (0, \infty) \to \mathbb{R}_+$, $\varphi:\nobreak (0, \infty) \to \mathbb{R}_+$, and $p_T: [0, T] \to \mathbb{R}_+$ by
    \begin{equation}\label{spatial_transformation}
        w_T(t, z) \coloneqq u_T(t, \log(z)), \quad 
        \varphi(z)\coloneqq \psi(\log(z)), \quad \text{ and } \quad 
        p_T(t) \coloneqq \exp\left(\widecheck{b}_T(t)\right),
    \end{equation}
    where $\widecheck{b}_T(\cdot)$, $u_T(\cdot, \cdot)$, and $\psi(\cdot)$ are defined in \eqref{bijection_between_stopping_boundaries}, \eqref{american_option_value_function}, and \eqref{price_dynamics_monotonicity_proof}, respectively. 
    This spatial transformation writes the corresponding call option in terms of the exponential coordinates. 
    Here, $p_T(\cdot)$ represents the transformed stopping boundary $\widecheck{b}_T(\cdot)$, the exercise boundary of the American call with value function $u_T(\cdot, \cdot)$, as shown in the proof of Proposition \ref{prop_boundary_monotonicity}.
    The approach of the proof will be to show that the new boundary $p_T(\cdot)$ is continuous on $[0, T)$ with $\lim_{t \to T} p_T(t) = 1$, from which the corresponding statements for the boundary $b_T(\cdot)$ will follow by inverting \eqref{bijection_between_stopping_boundaries}. To simplify notation, we write 
    $w \coloneqq w_T, p \coloneqq p_T$, and $\widecheck{b} \coloneqq \widecheck{b}_T$.
    
    On the strength of Proposition \ref{prop_boundary_monotonicity}, the boundary $p(\cdot)$ is necessarily non-increasing as a composition of $\widecheck{b}(\cdot)$ with $\exp(\cdot)$. Together with its lower semi-continuity, which is a consequence of Remark \ref{remark_lsc}, we deduce that $p(\cdot)$ is right-continuous.
    To show the left-continuity of $p(\cdot)$, we first obtain a lower bound on the partial derivative $\partial_{zz} w(\cdot, \cdot)$ in the corresponding continuation region $\widecheck{\mathcal{C}} \coloneqq\{(t, z) \in [0, T] \times \mathbb{R}_+ : z < p(t)\}$,
    and then argue by contradiction.
    
    To obtain a preliminary estimate on $\partial_{zz} w(\cdot, \cdot)$, we proceed as follows. First, we note that the function $w(\cdot, \cdot)$ satisfies the following PDE in the continuation region $\widecheck{\mathcal{C}}$:
    \begin{equation}\label{pde_for_w}
        \partial_t w(t, z) + \big(\beta_1(t) -\beta_0(t)\big) z \,\partial_z w(t, z) + \frac{a^2z^2}{2} \partial_{zz} w(t, z) =  -\beta_0(t) \, w(t, z);
    \end{equation}
    this is an immediate consequence of the relationships \eqref{american_call_value_fn}, \eqref{spatial_transformation}, and the PDE for the value function $V_T(\cdot, \cdot)$ in \eqref{value_function_boundary_problem}. 
    Next, we observe that $\partial_t w(t, z) \leq 0$ and $\partial_z w(t, z) \geq 0$ for any $(t, z) \in \widecheck{\mathcal{C}}$. The former follows from the definition of $w(\cdot, \cdot)$ in \eqref{spatial_transformation} and a similar statement for the function $u_T(\cdot, \cdot)$ of \eqref{american_option_value_function}. The latter follows from the probabilistic representation of $w(\cdot, \cdot)$ via the expectation, in the manner of \eqref{american_option_value_function}, and a comparison principle (Proposition 5.2.18 of \cite{BMSC}) for the processes of \eqref{price_dynamics_monotonicity_proof}.
    As a result, by rearranging terms in \eqref{pde_for_w}, noting that $z \in (0, \infty)$, and recalling the assumption $\beta_1(\cdot) - \beta_0(\cdot) < 0$ from \ref{reg_assm_5}, we obtain the following intermediate bound for any $(t, z) \in \widecheck{\mathcal{C}}$:
    \begin{equation}\label{w_zz_bound_preliminary}
        \frac{a^2 z^2}{2} \partial_{zz} w(t, z) 
        \geq
        -\beta_0(t) \, w(t, z) - \big(\beta_1(t) -\beta_0(t)\big) z\,  \partial_z w(t, z) 
        \ge
        -\beta_0(t) \, w(t, z) \ge -\beta_0(t)(z-1).
    \end{equation}
    Here, the last inequality follows from the fact that the value function $w(t, \cdot)$ from \eqref{spatial_transformation}, \eqref{american_option_value_function} always dominates the gain function $\varphi(\cdot) = (\cdot - 1)^+ \ge (\cdot - 1)$.

    To obtain the final bound, it remains to note that $z < p(0) < \infty$ holds for any $(t, z) \in \widecheck{\mathcal{C}}$. The first inequality is a consequence of the monotonicity of $p(\cdot)$, while the second follows from a comparison with the stopping boundary for the infinite-horizon problem in this setting. To be precise, one can consider an infinite-horizon American call option, similar to \eqref{american_option_value_function}--\eqref{price_dynamics_monotonicity_proof}, but with the constant discount rate $\beta_0(0)$ and constant drift $\beta_1(0) - \beta_0(0) - a^2/2$. By arguing similarly to the proof of Proposition \ref{prop_boundary_monotonicity}, it becomes clear that the value function for the infinite-horizon problem dominates $w(\cdot, \cdot)$, so the boundary $p(\cdot)$ lies below its infinite-horizon counterpart. The latter is well-known to be a real constant 
    (see, e.g., Chapter 2.6-2.7 in \cite{KarShr98}). 

    Combining the above observations with the bound \eqref{w_zz_bound_preliminary}, we obtain our final estimate
    \begin{equation}\label{w_zz_final_bound}
        \partial_{zz} w(t, z) \ge  -\frac{2\beta_0(t)}{a^2 \,p(0)^2}(z-1), \quad \forall \, (t, z) \in \widecheck{\mathcal{C}}.
    \end{equation}
    Before we proceed, it is also important to note that $p(t) \ge 1$ for any $t \in [0, T]$, since it is never optimal to exercise the option below the strike. Formally speaking, this follows from the positivity of $w(\cdot, \cdot)$, which in turn follows from the positivity of $V_T(\cdot, \cdot)$ that was proved in Proposition \ref{prop_structure_of_regions}.
    
    We now use the estimate \eqref{w_zz_final_bound} to show the left-continuity of the boundary $p(\cdot)$. We argue by contradiction. 
    Suppose that there exists $t_0 \in(0,T)$ such that $p(\cdot)$ is not left-continuous at $t_0$.
    Fix any $s \in [0, t_0)$ and note that the monotonicity of $p(\cdot)$ implies $p(s) \ge p(t_0-) > p(t_0) \ge 1$.
    Let $z = \big(p(t_0-) + p(t_0)\big)/2 \in (p(t_0), p(t_0-))$ and note that the point $(s, z)$ lies the continuation region $\widecheck{\mathcal{C}}$ above the point $(s, p(t_0))$.  
    By \eqref{spatial_transformation}, \eqref{american_call_value_fn}, and Proposition \ref{prop_C1_value_func}, we know that the function $w(t, \cdot)$ is of class $C^1$ for any $t \in [0, T)$. 
    From this and the fact that $w(t, \cdot) = \varphi(\cdot)$ in $\widecheck{\mathcal{C}}^c$, we deduce
    \begin{equation*}
        \begin{split}
            (w-\varphi)(s,z) 
            &= 
            (w-\varphi)(s,p(s)) - \int_{z}^{p(s)}\partial_z(w-\varphi)(s,h) \, dh
            \\&=
            -\int_{z}^{p(s)}\left(\partial_z (w-\varphi)(s,p(s))
            -\int_{h}^{p(s)}\partial_{zz} (w-\varphi)(s, u) \, du \right) dh
            \\&=
            \int_{z}^{p(s)} \int_{h}^{p(s)} \partial_{zz}(w-\varphi)(s, u) \, du \, dh
            =\int_{z}^{p(s)} \int_{h}^{p(s)} \partial_{zz} w(s, u) \, du \, dh
            \\&\ge
            -\frac{2\beta_0(s)}{a^2 \, p(0)^2} \int_{z}^{p(s)} \int_{h}^{p(s)} (u-1) \, du \, dh
            \\&\ge -\frac{2\beta_0(s)}{a^2 \, p(0)^2} \, (z -1) \int_{z}^{p(s)} (p(s) - h) \, dh
            =
            -\frac{\beta_0(s)}{a^2 \, p(0)^2} \, (z -1) \, (p(s) - z)^2.
        \end{split}
    \end{equation*}
    The fourth equality is a consequence of the fact that $\varphi(u) = u - 1$ for all $u \ge z \ge p(t_0) \ge 1$, the first inequality relies on the bound \eqref{w_zz_final_bound}, and the last inequality follows from the fact that $u \ge h \ge z$.
    
    As a result, for times $s < t_0$ we get the bound
    \begin{equation*}
        (w-\varphi)\left(s, z\right) 
        \ge
        -\frac{\beta_0(s)}{a^2 \, p(0)^2} \, (z -1) \left(p(s) - z\right)^2 
        \ge
        -\frac{\beta_0(s)}{a^2 \, p(0)^2}\eps,
    \end{equation*}
    where $\eps := \left(\frac{p(t_0-) + p(t_0)}{2} -1\right)\left(p(t_0-)-\frac{p(t_0-)+p(t_0)}{2}\right)^2>0$ is independent of $t$.
    Sending $s \to t_0$, using the continuity of $w(\cdot, \cdot)$ with the assumption \ref{bdy_assm_2}, and noting that the point $(t_0, z)$ belongs to the stopping region, 
    we arrive at the absurdity $0 = (w-\varphi)\left(t_0, z \right) \ge -\eps \, \beta_0(t_0) \left( a \, p(0)\right)^{-2} > 0$,
    which forces us to conclude that $p(\cdot)$ is continuous on $[0, T)$. 

    To obtain that $\lim_{t \to T} p(t) = 1$, it suffices to repeat the above arguments with $t_0 = T$, formally identifying $p(T) = 1$, and derive a contradiction to $p(T-)>1$. The identification $p(T) = 1$ is needed since the terminal value of $p(\cdot)$, implied by our boundary definition in \eqref{boundary_definition}, is zero.
\end{proof}

\subsection{Examples}\label{subsec_examples}

We conclude our study of boundary behavior by analyzing some particular examples of stopping boundaries via numerical analysis. More precisely, we consider six situations in which the discount functions are given by exponential, linear, or (smoothed) step functions, and illustrate the effect of these functions and reward magnitudes on the resulting boundaries. The latter are obtained numerically using the dynamic programming principle and backward induction.
We summarize our cases in Table \hyperlink{tbl1}{1} in terms of the following specifications.

\begin{table}[htbp]
\caption{Parameters for the examples.}
\begin{center}
\begin{tabular}{|M{1.33cm} | M{0.6cm} | M{0.9cm} | M{0.6cm} | M{4.2cm} | M{2.6cm}|  N}
    \hline
    & \hypertarget{tbl1}{$a$} & $b$  & $k$ & $c(0,t)$ & $c(1,t)$ \\ \hline
    \hypertarget{eg1}{Example 5.1} &  $2$ & $-1$ & $\displaystyle\frac{2}{5}$ & $\exp\left(-k t\right)$  & $\exp\left(- t\right)$ & \rule[2ex]{0pt}{4ex} \\ \hline
    \hypertarget{eg2}{Example 5.2} & $6$ & $-1$ & $\displaystyle\frac{2}{5}$ & $\exp\left(-k t\right)$  & $\exp\left(- t\right)$ & \rule[2ex]{0pt}{4ex} \\ \hline
    \hypertarget{eg3}{Example 5.3} & $2$ & $\displaystyle-\frac{1}{2}$ & $\displaystyle\frac{2}{5}$ & $\exp\left(-kt\right)$  & $\exp\left(-t\right)$ & \rule[2ex]{0pt}{4ex} \\ \hline
    \hypertarget{eg4}{Example 5.4} & $\sqrt{8}$ & $\displaystyle-\frac{1}{\sqrt{2}}$ & $\displaystyle\frac{1}{2}$ & $\displaystyle\frac{t+k}{1+k}$  & $\exp\left(-2t\right)$ & \rule[2ex]{0pt}{4ex} \\ \hline
    \hypertarget{eg5}{Example 5.5} & $4$ & $-1$ & $\displaystyle\frac{49}{100}$ & $\exp(-kt)$  & $\displaystyle 1 - \frac{t}{2}$ & \rule[2ex]{0pt}{4ex} \\ \hline
    \hypertarget{eg6}{Example 5.6} & $2$ & $\displaystyle -\frac{1}{3}$ & $10^3$ & $1-\frac{1}{3} \left(f_{k}^{1/3}\left(t\right) + f_{k}^{2/3}\left(t\right)\right)$  &  $ 1 - \frac{1}{2} f_{k}^{1/2}\left(t\right) - \frac{t}{10} $ & \rule[2ex]{0pt}{4ex} \\ \hline
\end{tabular}
\end{center}
\end{table}

We let $T=1$ be the time horizon of the problem, and recall that $a$ and $b$ are parameters of the reward $(a \theta + b)$. Additionally, we let $k$ be a parameter for the survival functions $c(0,t)$ and $c(1,t)$ of \eqref{def_survival_function} (see \eqref{eqn:orig.prob.embedding} for the relation to $c_0(t)$ and $c_1(t)$), and define $f_{k}^s(t)$ to be $C^\infty$ approximations to the indicator functions $f^s(t) \coloneqq \mathbf{1}(t>s)$; that is,
\begin{equation}\label{step_fn_approx}
    f_k(t,s) \coloneqq \begin{cases}
        \exp\left(-\frac{1}{k(t-s)} \right), & t > s, \\
        0, & \text{otherwise.}
    \end{cases}
\end{equation}
The reader may check that the assumptions \ref{reg_assm_1}--\ref{reg_assm_5} are satisfied in all of the above examples, except Example \hyperlink{eg4}{5.4}. However, it does satisfy the assumptions of Remark \ref{relaxation_of_assumptions_remark}, and thus our framework indeed applies here.
It is also easy to verify that the assumptions \ref{bdy_assm}--\ref{bdy_assm_3} are satisfied in Examples \hyperlink{eg1}{5.1}--\hyperlink{eg3}{5.3} and \hyperlink{eg5}{5.5}. We obtain the following results.

Examples \hyperlink{eg1}{5.1}, \hyperlink{eg2}{5.2}, and \hyperlink{eg3}{5.3} examine the scenario of exponential discounting under identical parameters, showcasing the impact of varying the constants \(a\) and \(b\), as illustrated in Figure \ref{figure1}. Intuitively, reducing \(b\) should result in a lower stopping boundary (on average), and this is confirmed by the comparison between Examples \hyperlink{eg1}{5.1} and \hyperlink{eg3}{5.3}. With a smaller downside for an incorrect decision and a larger upside for a correct one, the observer is more willing to stop with less evidence of being in the ``good’’ regime (\(\theta = 1\)). However, this intuition does not always hold when \(b\) is fixed and \(a\) is increased, as highlighted by the differences between Examples \hyperlink{eg1}{5.1} and \hyperlink{eg2}{5.2}. This reflects the nuanced tradeoff faced by an optimal stopper: extending observation time enhances the likelihood of accurate detection but also incurs a loss in reward due to the decreasing nature of the discount function \(c_1(\cdot)\). Additionally, the decreasing behavior of \(c_0(\cdot)\) in these examples further mitigates the penalty for incorrect decisions made at later times, influencing the balance of this tradeoff. We highlight here that the boundaries in these regimes seem to be concave with a long interval of increase and then a steep decline close to the terminal time $T$.
\setlength{\abovecaptionskip}{-5pt}

\begin{figure}[H]
    \centering\includegraphics[scale=0.3]{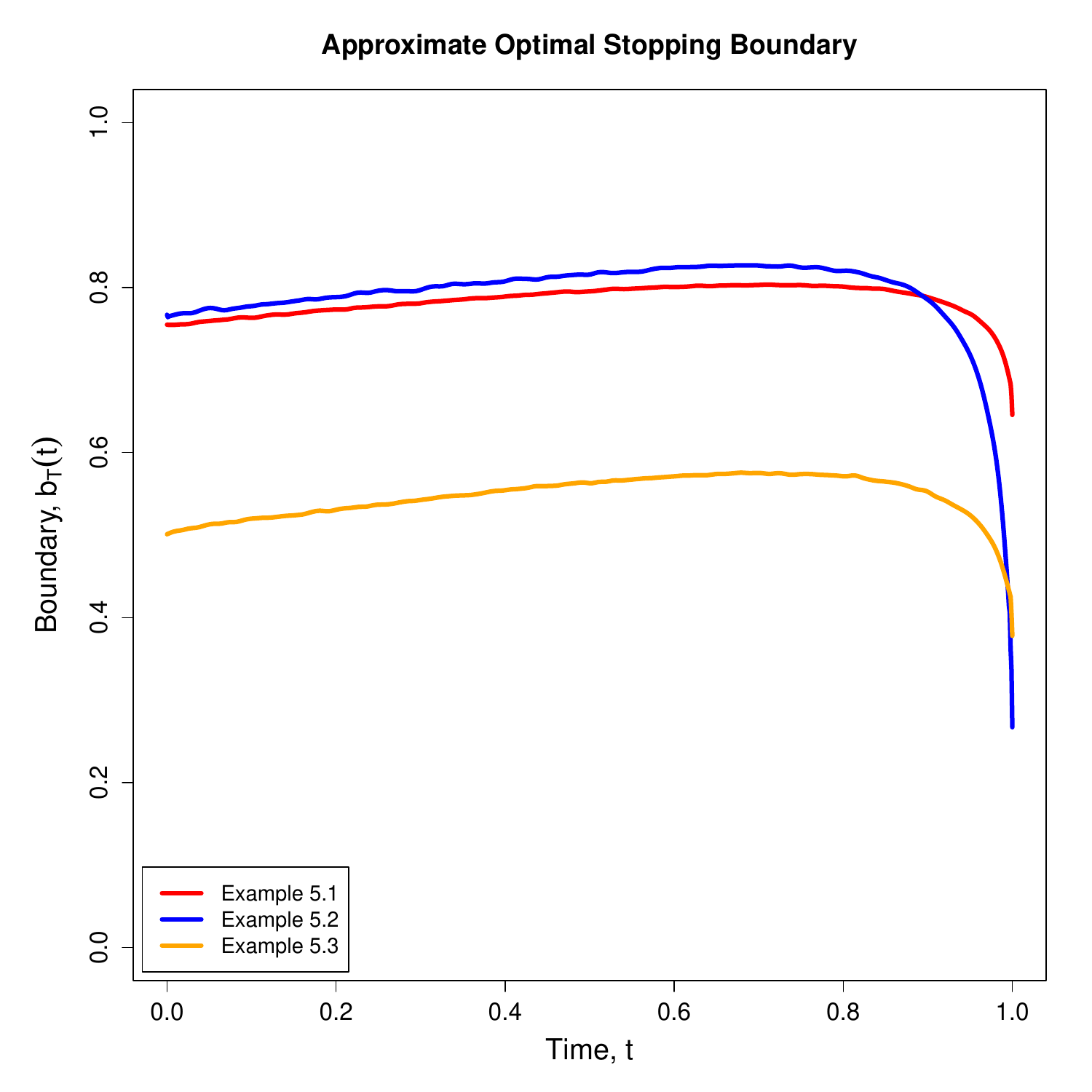}
        \hspace{0.2cm}
        \includegraphics[scale=0.3]{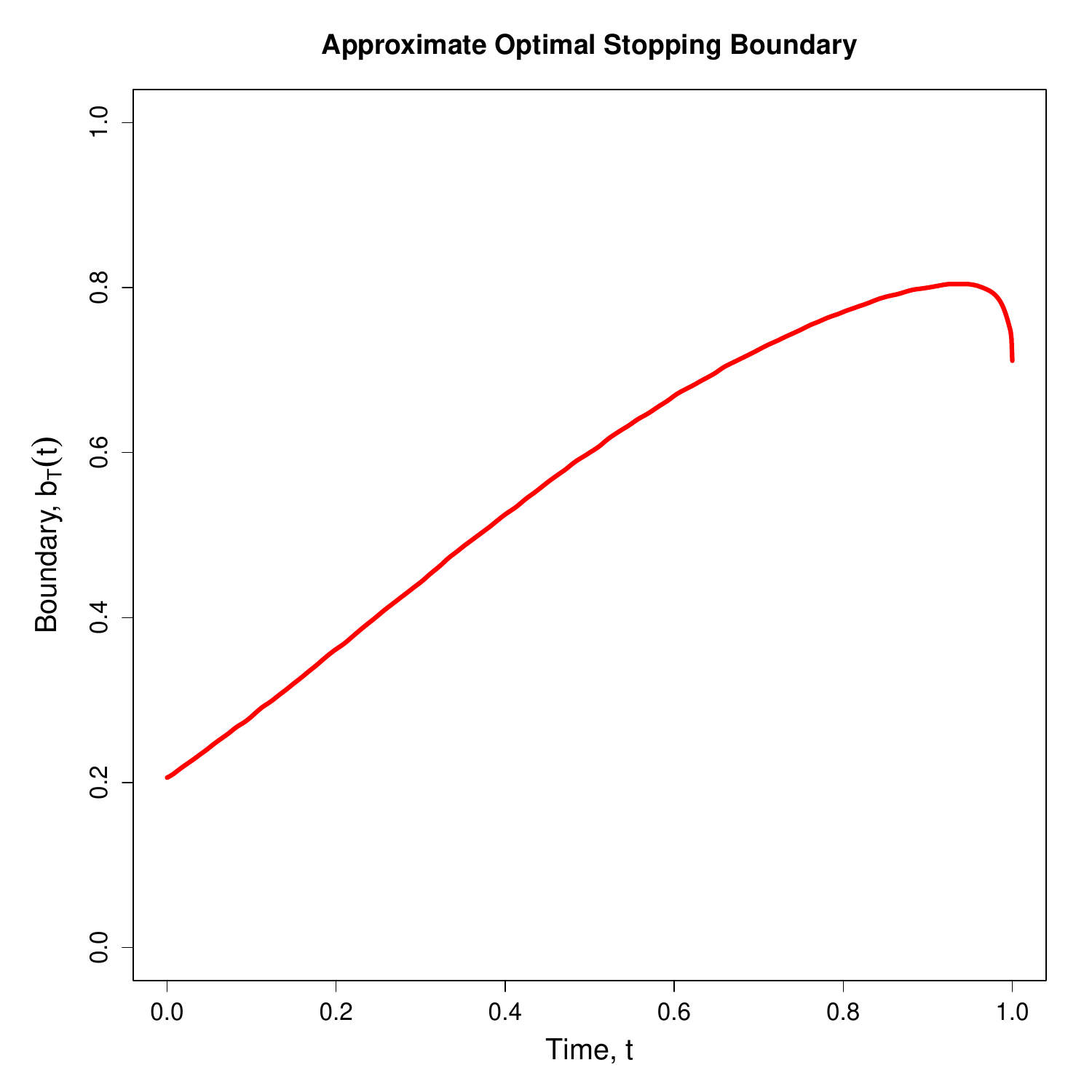}
    \caption{Optimal stopping boundary for the parameters and discount functions of Examples \protect\hyperlink{eg1}{5.1}, \protect\hyperlink{eg2}{5.2}, and \protect\hyperlink{eg3}{5.3}, (left), and \protect\hyperlink{eg4}{5.4}  (right).} \label{figure1}
\end{figure}

Example \hyperlink{eg4}{5.4}, shown in Figure \ref{figure2}, explores the effect of a non-decreasing \(c_0(\cdot)\), which falls under our framework given that $c_1(\cdot)$ and $c_0(\cdot) + c_1(\cdot)$ are both non-increasing (recall again Remark \ref{relaxation_of_assumptions_remark}).
The non-decreasing property of \(c_0(\cdot)\) models a scenario where the penalty for accepting the unknown drift (\(\theta = 0\)) increases with delay. This represents a departure from the ``random time horizon’’ framework, since this \(c_0(\cdot)\) no longer behaves as a survival function. Within the context of the hiring problem mentioned in the introduction, an increasing penalty for hiring a suboptimal candidate may correspond to additional costs accrued from hiring a poor candidate, or from prolonged deliberation before making the decision. We see that the boundary in Example \hyperlink{eg4}{5.4} in some sense resembles the ones from Examples \hyperlink{eg1}{5.1}-- \hyperlink{eg3}{5.3}. However, notably, this time the boundary is not concave.

Two other examples are displayed in Figure \ref{figure2}. In Example \hyperlink{eg5}{5.5} we see the effect of different rates of decay for the functions $c_0(\cdot)$ and $c_1(\cdot)$. In particular, the given choice of exponential decay for $c_0(\cdot)$ (i.e., exponential decay for the penalty received if one guesses that the sign of the drift is positive incorrectly) counteracts the effect of the linear decay for $c_1(\cdot)$ (i.e., linear decay of the reward if one waits to correctly identify the sign of the drift as being positive), and we obtain a non-increasing boundary. This is precisely the opposite effect as in the previous examples.

Finally, in Example \hyperlink{eg6}{5.6}, we create a scenario where the discount functions $c_0(\cdot)$ and $c_1(\cdot)$ have rather rapid rates of decay around certain points and are relatively flat and linear elsewhere, due to the relationship \eqref{step_fn_approx}. The effect of such discount functions is clear and intuitive: rapid decreases in $c_0(\cdot)$ result in rapid decreases in the boundary since our reward-to-penalty ratio jumps appropriately, whereas the opposite is true for rapid decreases in $c_1(\cdot)$. The interval of approximately $(0, T/3)$,  where $b_T(\cdot)$ increases relatively slowly, corresponds to the (approximately) linear decay of $c_1(\cdot)$ and unchanging $c_0(\cdot)$ on this set.

We conclude with the observation that, as $t \uparrow T$, the boundaries approach 
$c_0(T) / \big(c_0(T) +\nobreak c_1(T)\big)$ in all of our examples. This behavior, however, is expected due to the fact that at the terminal time $T$ we have the equality
$$V_T(T,\pi) = \Big(c_1(T) \pi - c_0(T) (1-\pi) \Big)^+.$$
Moreover, in Examples \hyperlink{eg1}{5.1}--\hyperlink{eg3}{5.3} and \hyperlink{eg5}{5.5} this behavior follows from Proposition \ref{prop_boundary_continuity}.

    \begin{figure}[H]
        \centering\includegraphics[scale=0.3]{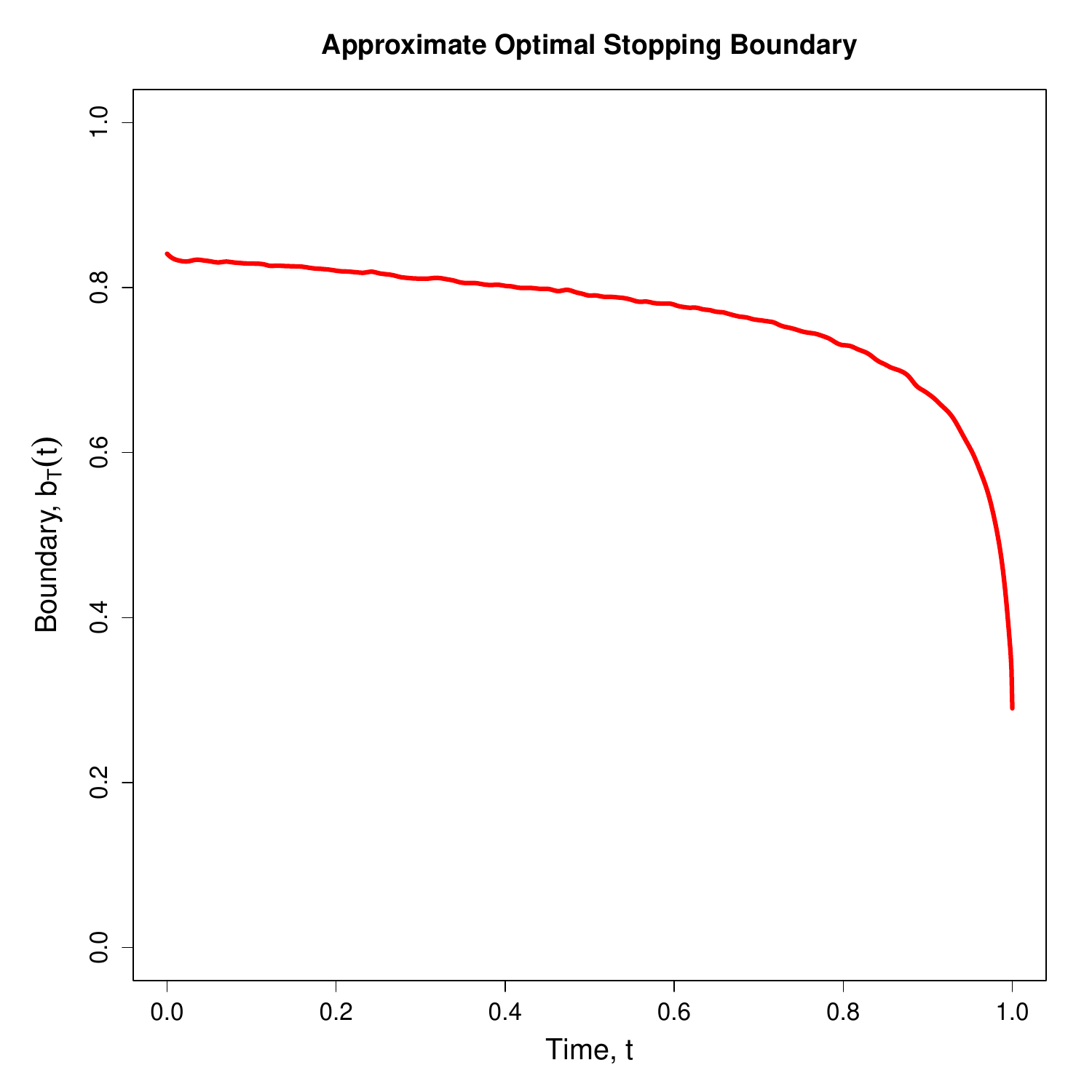}
        \hspace{0.2cm}
        \includegraphics[scale=0.3]{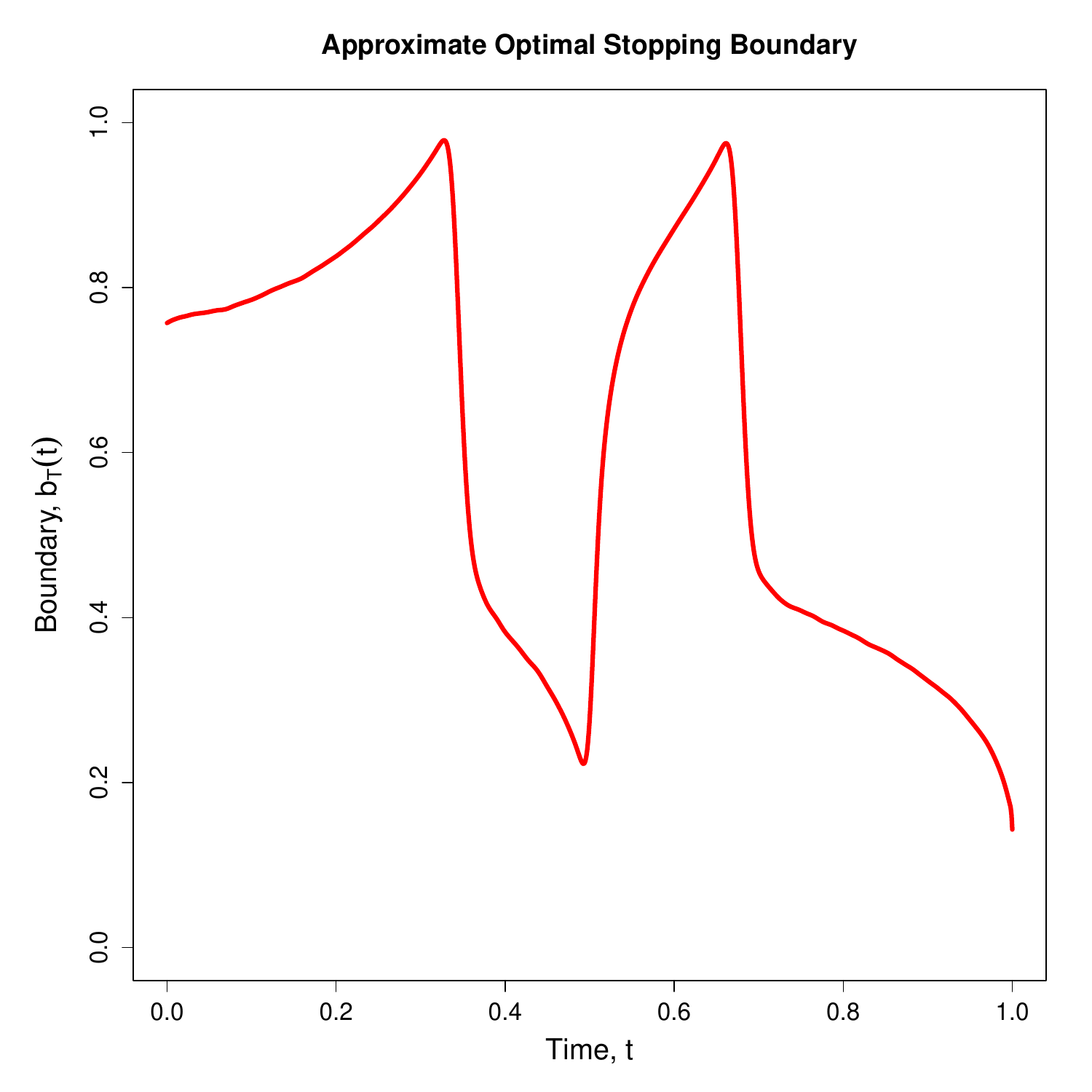}
        \caption{Optimal stopping boundary for the parameters and discount functions of Examples \protect\hyperlink{eg5}{5.5} (left) and \protect\hyperlink{eg6}{5.6} (right).} \label{figure2}
    \end{figure}

\section{Concluding Remarks and Future Research}\label{sec_conclusion}

In this paper, we solve a filtering problem for an unknown reward in the presence of a stochastic deadline. Our framework is quite flexible insofar as it can accommodate a vast set of deadline distributions for both finite and infinite-horizons that, crucially, may \textit{depend on the reward}. Attaining this level of generality required methodological novelty and careful threading of both probabilistic and analytic arguments. Our results suggest a number of natural directions for further research.

\begin{enumerate}
    \item We believe that the continuity of the stopping boundary in our problem holds under much milder assumptions than those of Proposition \ref{prop_boundary_continuity} (and perhaps no additional assumptions beyond \ref{reg_assm_1}--\ref{reg_assm_5}). Thus, the first direction would be to explore boundary regularity, which, however, requires developing novel techniques, since the existing approaches (e.g., \cite{DeAngelis15, DeAngelisStabile, DeAngelisLamberton24}) do not directly apply due to the broad class of discount factors that we consider and the non-differentiability of the gain function. 
    \item Another direction would be to admit additional rewards that can be claimed if stopping \textit{after} the realization of the stochastic deadline. We expect that many of the techniques in this paper can be applied in that setting.
    \item Our problem formulation is partly motivated by a stopping game proposed by Erik Ekst\"om where two players compete to obtain an unknown and stochastic reward. Games of optimal stopping in the presence of uncertainty have been extensively studied; see several recent examples in \cite{DeangFerMor, GaitsGroen23, SeelStrack16} and references therein. However, the situation changes dramatically if the noise in the problem is common to all players, in which case the only  
    related example known to the authors is \cite{CampZha24MF}. Such problems are incredibly difficult to solve since the intrinsic challenges that arise from stopping time interactions are further compounded by a coupled information structure. In this work, we provide a general solution to the single-player problem which we hope will open the door to solutions in the game setting. 
\end{enumerate}

\noindent
\textbf{Acknowledgments}

We thank Erik Ekström for suggesting to us a problem, still open, which motivated us to work along the present lines. 
We also gratefully acknowledge support from the National Science Foundation under grant NSF-DMS-20-04997.

\begin{appendices}

\section{A Discussion on Remark \ref{remark_gamma_filtration}}\label{subsec_equality_of_gamma_problems}

Remark \ref{remark_gamma_filtration} states that maximizing \eqref{expected_reward_function_with_decision} over $\tau \in \mathcal{T}_T$ and $\mathcal{F}(\tau)$--measurable decision rules $d$ is equivalent to maximizing  \eqref{expected_reward_function_with_decision} over stopping times $\tau \leq T$ of the filtration $\mathbb{G} \coloneqq \left\{\mathcal{G}(t)\right\}_{t \ge 0}$ and $\mathcal{G}(\tau)$--measurable decision rules $d$, where $\mathcal{G}(t)\coloneqq \overline{\sigma}\big(X(s), \mathbf{1}_{\{\gamma > s\}}, s \leq t\big)$ and $\overline \sigma$ denotes the usual augmentation with respect to $(\Omega, \mathcal{F}, \pr)$. This observation is summarized by the next proposition. Its proof is technical and may be skipped by the reader who is not interested in the finer measure-theoretic details. 
\begin{Prop} \label{equality_of_gamma_problems_prop} 
    For every $T \ge 0$, with $J_T(\cdot, \cdot)$ defined in \eqref{expected_reward_function_with_decision}, we have 
    \begin{equation} \label{equality_of_gamma_problems}
        \sup_{\substack{\tau \in \mathcal{T}_T, \\ d \in \mathcal{F}(\tau)}} J_T(\tau,d) = \sup_{\substack{\tau \in \mathcal{T}^\mathbb{G}_T, \\ d \in \mathcal{G}(\tau)}} J_T(\tau,d),
    \end{equation}
    where $\mathcal{T}_T^{\mathbb{G}}$ denotes the set of all $\mathbb{G}$ stopping times which are almost surely bounded from above by $T$. Additionally, any optimal policy $(\tau,d)$ for the problem on the left hand side of \eqref{equality_of_gamma_problems} is optimal for the problem on the right hand side as well.
\end{Prop}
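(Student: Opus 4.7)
The plan is to establish the two inequalities in \eqref{equality_of_gamma_problems}. The easy direction, $\sup_\mathbb{F} \le \sup_\mathbb{G}$, is immediate: every $\mathbb{F}$--stopping time is a $\mathbb{G}$--stopping time (since $\mathcal{F}(t) \subseteq \mathcal{G}(t)$ for all $t$), and every $\mathcal{F}(\tau)$--measurable $d$ is $\mathcal{G}(\tau)$--measurable, so the supremum on the left is taken over a subfamily of the policies on the right, with identical objective functional $J_T$. Granting the equality \eqref{equality_of_gamma_problems}, the additional claim about optimality is then trivial: an optimal pair for the LHS is admissible for the RHS with value $J_T(\tau, d)$ equal to the RHS supremum, hence optimal there as well.

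For the nontrivial inequality $\sup_\mathbb{G} \le \sup_\mathbb{F}$, I would exploit that $\mathbb{G}$ is exactly the progressive enlargement of $\mathbb{F}$ by the random time $\gamma$, i.e., $\mathcal{G}(t) = \mathcal{F}(t) \vee \overline\sigma(\gamma \wedge t)$. A classical decomposition result (Jeulin; see also the monograph of Aksamit and Jeanblanc on enlargement of filtrations) then provides the following two facts: for every $\mathbb{G}$--stopping time $\tau$ there exists an $\mathbb{F}$--stopping time $\tau^F$ with $\tau \wedge \gamma = \tau^F \wedge \gamma$ almost surely, and every $\mathcal{G}(\tau)$--measurable random variable $Y$ admits an $\mathcal{F}(\tau^F)$--measurable representative $Y^F$ satisfying $Y \cdot \mathbf{1}_{\{\tau < \gamma\}} = Y^F \cdot \mathbf{1}_{\{\tau^F < \gamma\}}$ almost surely. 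In particular, on $\{\tau < \gamma\}$ we have both $\tau^F = \tau$ and $\{\tau^F < \gamma\} \supseteq \{\tau < \gamma\}$.

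Given this tool, the construction proceeds as follows. Fix $(\tau, d)$ admissible for the RHS, let $(\tau^F, d^F)$ be as above with $d^F$ taken to be $\{0, 1\}$--valued, set $\tilde \tau \coloneqq \tau^F \wedge T \in \mathcal{T}_T$, and choose a $\{0,1\}$--valued, $\mathcal{F}(\tilde\tau)$--measurable $\tilde d$ which coincides with $d^F$ on $\{\tilde\tau < T\}$ and vanishes on $\{\tilde\tau = T\}$. Since $\gamma \le T$ by hypothesis on the stochastic deadline, on $\{\tau^F < \gamma\}$ we have $\tau^F < \gamma \le T$, so $\tilde\tau = \tau^F$ and $\tilde d = d^F$; on $\{\tau^F \ge \gamma\}$ we have $\tilde\tau \ge \gamma \wedge T = \gamma$, so $\mathbf{1}_{\{\tilde\tau < \gamma\}} = 0$. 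Combining these cases yields $\mathbf{1}_{\{\tilde\tau < \gamma\}} = \mathbf{1}_{\{\tau^F < \gamma\}} = \mathbf{1}_{\{\tau < \gamma\}}$ and hence $\tilde d \cdot \mathbf{1}_{\{\tilde\tau < \gamma\}} = d \cdot \mathbf{1}_{\{\tau < \gamma\}}$ almost surely. Multiplying by $(a\theta + b)$ and taking expectations gives $J_T(\tilde\tau, \tilde d) = J_T(\tau, d)$, proving the remaining inequality.

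The main obstacle will be the measurability bookkeeping: verifying rigorously that the truncation of $\tau^F$ at $T$ and the corresponding redefinition of $d^F$ on $\{\tilde\tau = T\}$ preserve $\mathcal{F}(\tilde\tau)$--measurability while retaining the correct value on $\{\tilde\tau < \gamma\}$, and that the invoked Jeulin--type decomposition applies in our specific setup where $\gamma$ may depend on $\theta$ (which drives the filtration $\mathbb{F}$ through $X(\cdot)$). Conceptually, the construction reflects the underlying intuition that on $\{\tau < \gamma\}$ the additional information in $\mathcal{G}(\tau)$ beyond $\mathcal{F}(\tau)$ is only the trivial fact that $\gamma$ has not yet occurred, whereas after $\gamma$ the reward is already lost; hence observing the indicator process $(\mathbf{1}_{\{\gamma > s\}})_{s \ge 0}$ cannot improve the expected payoff.
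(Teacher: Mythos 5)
Your proposal is correct and rests on the same core idea as the paper's proof: represent $\mathbb{G}$--stopping times and $\mathcal{G}(\tau)$--measurable decisions by $\mathbb{F}$--counterparts on the stochastically relevant event $\{\cdot < \gamma\}$, then truncate at $T$. The difference is where the heavy lifting happens. You invoke the Jeulin decomposition (stopping-time version and $\sigma$-algebra version) from the enlargement-of-filtration literature as a black box, and you assert $\mathcal{G}(t) = \mathcal{F}(t) \vee \overline\sigma(\gamma \wedge t)$. The paper is a bit more careful here: the filtration $\mathbb{G}$ as defined (generated by $X(\cdot)$ and the indicator of $\gamma$, augmented by null sets only) is not obviously right-continuous, so the paper introduces a larger auxiliary filtration $\mathbb{H}$ with $\mathcal{F}(t)\subseteq\mathcal{G}(t)\subseteq\mathcal{H}(t)$, proves $\mathbb{H}$ satisfies the usual conditions, and then establishes the representation $g(s)=f(s)$ for $s\in[0,\gamma)$ for any $\mathbb{H}$--optional process $g(\cdot)$ directly via a monotone class argument (adapting a lemma from Protter). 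Applying this to the optional processes $\mathbf{1}_{\{\tau\le t\}}$ and $d\cdot\mathbf{1}_{\{\tau\le t\}}$ yields the same $\sigma$ and $\tilde d$ you construct. Your route is shorter at the cost of deferring to Jeulin's theory; it is valid because a $\mathbb{G}$--stopping time is in particular a stopping time for the (right-continuous) progressive enlargement, so the classical decomposition applies. The remaining bookkeeping in your argument — truncating $\tau^F$ at $T$, verifying $\mathcal{F}(\tilde\tau)$--measurability of $\tilde d = d^F\cdot\mathbf{1}_{\{\tilde\tau<T\}}$, and checking $\tilde d\cdot\mathbf{1}_{\{\tilde\tau<\gamma\}}=d\cdot\mathbf{1}_{\{\tau<\gamma\}}$ using $\gamma\le T$ — is sound and matches the paper's construction of $\sigma=\inf\{t: f(t)>0\}\wedge T$ and $\tilde d = h(\sigma)\mathbf{1}_{\{h(\sigma)\in\{0,1\}\}}$ in substance. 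One small caveat worth making explicit if you were to write this up: the Jeulin-type representation gives $\mathcal{F}(\tau^F)$--measurability of $d^F$, while you need $\mathcal{F}(\tilde\tau)$--measurability of $\tilde d$ with $\tilde\tau=\tau^F\wedge T\le\tau^F$; the verification that $A\cap\{\tau^F<T\}\in\mathcal{F}(\tau^F\wedge T)$ for $A\in\mathcal{F}(\tau^F)$ is straightforward but not entirely free, and you flagged it correctly as "measurability bookkeeping."
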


\begin{proof}
First, we define the filtration $\mathbb{H} \coloneqq \left\{\mathcal{H}(t) \right\}_{t \ge 0}$ with
\begin{equation} \label{barGdefn}
    \mathcal{H}(t) \coloneqq \Big\{A \in \mathcal{F} : A \cap \{\gamma > t\} = A_t \cap \{\gamma > t\} \text{ for some } A_t \in \mathcal{F}(t)\Big\}.
\end{equation}
Note that $\mathcal{F}(t) \subseteq \mathcal{G}(t) \subseteq \olh(t)$ for each $t \ge 0$. The first inclusion is trivial, and the second follows from the fact that (i) ${\sigma}\big(X(s), \mathbf{1}_{\{\gamma > s\}}, s \leq t\big) \subseteq \olh(t)$, and (ii) $\mathbb{H}$ satisfies the usual conditions, i.e., it is right-continuous and complete. (i) is trivial, and we show (ii) using the fact that $\mathbb{F}$ satisfies the usual conditions. Indeed, if $A \in \bigcap_{n \in \mathbb{N}} \olh(t+1/n)$, there exist sets $A_{t+1/n} \in \mathcal{F}(t+1/n)$ such that $A \cap \{\gamma > t + 1/n\} = A_{t+1/n}\cap \{\gamma > t+1/n\}$ for every $n \in \mathbb{N}$. Define $A_t \coloneqq \bigcap_{n=1}^\infty \bigcup_{k=n}^\infty A_{t+1/k}$. Since the sets $\bigcup_{k=n}^\infty A_{t+1/k}$ are decreasing in $n$, $A_t = \bigcap_{n = m}^\infty \bigcup_{k=n}^\infty A_{t+1/k} \in \mathcal{F}(t+1/m)$ for every $m \in \mathbb{N}$ and thus $A_t \in \mathcal{F}(t)$ by the right-continuity of $\mathbb{F}$. Furthermore,
\begin{equation*}
    A_t \cap \{\gamma > t\} = \bigcap_{n=1}^\infty \bigcup_{k=n}^\infty \Big(A_{t+1/k} \cap \{\gamma > t + 1/k\}\Big) = A \cap \{\gamma > t\}.
\end{equation*}
Thus, $A \in \olh(t)$ by the definition of $\olh(t)$ in \eqref{barGdefn}, showing the right-continuity of $\mathbb{H}$. Completeness follows since trivially $\mathcal{F}(t) \subseteq \olh(t)$.

A key observation is that for any $\mathbb{H}$--optional process $g(\cdot)$, there exists an $\mathbb{F}$--optional process $f(\cdot)$ such that
\begin{equation} \label{G_represented_F}
    g(s) = f(s), \quad \forall \, s \in [0, \gamma), \; a.s.
\end{equation}
A similar statement for predictable processes and $s \in [0, \gamma]$ can be found in the lemma on page 378 of Protter \cite{Protter}, the proof of which is easily adaptable to our setup. 
Namely, to obtain \eqref{G_represented_F}, we first let $A \in \olh(t)$, and note by the definition of $\olh(t)$ in \eqref{barGdefn} that there exists $A_t \in \mathcal{F}(t)$ with
\begin{equation*}
    \mathbf{1}_{A \times [t, \infty)} (\omega, s) = \mathbf{1}_{A_t \times [t,\infty)} (\omega, s) \quad \forall \; s \in [0, \gamma), \; a.s.
\end{equation*}
Since $A_t \times [t, \infty)$ is $\mathbb{F}$--optional, and $\{A \times [t, \infty) : A \in \olh(t), t \ge 0\}$ is a $\pi$-system generating the $\mathbb{H}$--optional sigma field, the claim follows by the monotone class theorem.

Now, let $\tau \in \mathcal{T}^\mathbb{G}_T$, $d \in \mathcal{G}(\tau)$ with $d \in \{0,1\}$. From \eqref{G_represented_F}, there exists $\mathbb{F}$-optional $f(\cdot)$ such that
\begin{equation*}
    \mathbf{1}_{\{\tau \leq t\}} = f(t), \quad \forall \; t < \gamma, \; a.s.
\end{equation*}
Define $\sigma \coloneqq \inf\{t \ge 0 : f(t) > 0\} \wedge T$, which is in $\mathcal{T}_T$ of \eqref{collection_stop_times} by the Debut Theorem, since $\mathbb{F}$ satisfies the usual conditions. Note that
    $\{\tau < \gamma\} \Delta\{\sigma < \gamma\}$
is a null set and $\tau = \sigma$ almost surely on the set $\{\tau < \gamma\}$. Furthermore, $d \cdot \mathbf{1}_{\{\tau \leq t \}}$ is $\mathbb{H}$--optional, so again there exists $\mathbb{F}$--optional $h(\cdot)$ such that 
\begin{equation*}
    d \cdot \mathbf{1}_{\{\tau \leq t\}} = h(t), \quad \forall \; t < \gamma \; a.s.
\end{equation*}
Define $\tilde{d} \coloneqq h(\sigma) \mathbf{1}_{\{h(\sigma) \in \{0,1\}\}}$ so that $\tilde d$ is $\mathcal{F}(\sigma)$--measurable and $\{0,1\}$--valued. We have
\begin{align*}
    \mathbf{1}_{\{\tau < \gamma\}} \mathbf{1}_{\{d=1\}} &= \lim_{n \to \infty} d \cdot\mathbf{1}_{\{\tau \leq \gamma - 1/n\}} = \lim_{n \to \infty} h(\tau) \mathbf{1}_{\{h(\tau) \in \{0,1\}\}} \mathbf{1}_{\{\tau \leq \gamma - 1/n\}} \\
    &= \lim_{n \to \infty} h(\sigma) \mathbf{1}_{\{h(\sigma) \in \{0,1\}\}} \mathbf{1}_{\{\sigma \leq \gamma - 1/n\}} = \tilde d \cdot \mathbf{1}_{\{\sigma < \gamma\}},
\end{align*}
where all equalities hold $\pr$--almost surely. This establishes \eqref{equality_of_gamma_problems}, concluding the proof. 
\end{proof}

\section{Proofs of Technical Results}\label{sec_appendix}

We make use of the following simple lemma several times throughout the paper.
\begin{Lem} \label{lem_value_incr_monotonically_T_upward}
    Fix $T \in [0,\infty]$, $(t,\pi) \in [0,T)\times[0,1]$. For any increasing sequence $\{T_n\}_{n \in \mathbb{N}}$ such that $T_n \uparrow T$, the sequence $\{V_{T_n}(t, \pi)\}_{n \in \mathbb{N}}$ is increasing and we have
    \begin{equation*}
        \lim_{n \rightarrow \infty} V_{T_n}(t, \pi) = V_T(t,\pi).
    \end{equation*}
\end{Lem}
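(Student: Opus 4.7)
The plan is to prove both monotonicity and convergence. For monotonicity, I would simply observe that whenever $T_n \leq T_m$ we have $\mathcal{T}_{T_n - t} \subseteq \mathcal{T}_{T_m - t}$ (since any stopping time bounded by $T_n - t$ is automatically bounded by $T_m - t$), so the supremum defining $V_{T_m}(t,\pi)$ is taken over a larger set and hence dominates $V_{T_n}(t,\pi)$. The same argument gives $V_{T_n}(t, \pi) \le V_T(t, \pi)$ for all $n$, so $\lim_{n\to\infty} V_{T_n}(t,\pi) \le V_T(t, \pi)$ immediately.

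For the matching lower bound, I would fix an arbitrary $\tau \in \mathcal{T}_{T - t}$, define the truncated stopping time $\tau_n \coloneqq \tau \wedge (T_n - t) \in \mathcal{T}_{T_n - t}$, and argue that
\begin{equation*}
J_{T_n}(t, \pi, \tau_n) = \ex\!\left[G\big(t + \tau_n, \Pi^\pi(\tau_n)\big)\right] \xrightarrow[n \to \infty]{} \ex\!\left[G\big(t + \tau, \Pi^\pi(\tau)\big)\right] = J_T(t, \pi, \tau).
\end{equation*}
(Note that $J_{T_n}$ and $J_T$ coincide when evaluated at the same admissible stopping time, since the expression in \eqref{expected_reward_general_pi} only depends on $T$ through the admissibility constraint.) Taking the supremum over $\tau$ then yields $\lim_{n\to\infty} V_{T_n}(t, \pi) \ge V_T(t, \pi)$, which together with the previous paragraph gives the claim.

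The convergence $J_{T_n}(t, \pi, \tau_n) \to J_T(t, \pi, \tau)$ is the one step that deserves care. In the case $T < \infty$, for all $n$ sufficiently large we have $T_n - t > \tau$ on $\{\tau \le T-t\}$, so $\tau_n \to \tau$ pointwise; continuity of $c_0, c_1$ on $[0,T]$ and of the sample paths of $\Pi^\pi(\cdot)$ give $G(t+\tau_n, \Pi^\pi(\tau_n)) \to G(t+\tau, \Pi^\pi(\tau))$ almost surely, and the integrand is bounded (by $c_0(0) + c_1(0)$, say, since $\Pi^\pi \in [0,1]$), so dominated convergence applies. In the case $T = \infty$, on $\{\tau < \infty\}$ one still has $\tau_n \to \tau$ eventually; on $\{\tau = \infty\}$, $\tau_n = T_n - t \to \infty$ while $\Pi^\pi(T_n - t) \to \Pi^\pi(\infty)$ almost surely by the Martingale Convergence Theorem (as $\Pi^\pi$ is a bounded martingale in view of \eqref{cond_process_with_arbitrary_position}), and $G(t+\tau_n, \Pi^\pi(\tau_n)) \to G(\infty, \Pi^\pi(\infty))$ almost surely thanks to the extended continuity of $c_0, c_1$ at infinity assumed in Section \ref{sec_Model}. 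Boundedness of $G$ again permits dominated convergence, completing the proof.

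The only real obstacle is the infinite-horizon endpoint: one must justify interpreting $G(t+\tau, \Pi^\pi(\tau))$ on $\{\tau = \infty\}$ and match it with the limit of the truncated quantities. This is exactly what the continuity of $c_i(\cdot)$ at $\infty$ (via the one-point compactification) and the almost sure existence of $\Pi^\pi(\infty)$ are designed to handle.
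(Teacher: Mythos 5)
Your proof is correct and follows essentially the same route as the paper's: monotonicity via nesting of the admissible stopping-time families, and the lower bound via the truncated stopping times $\tau \wedge (T_n - t)$ together with dominated/bounded convergence and the martingale convergence theorem for $\Pi^\pi(\cdot)$. The only minor imprecision is the claim that $T_n - t > \tau$ eventually on $\{\tau \le T-t\}$ (this fails on $\{\tau = T-t\}$ when $T<\infty$), but the conclusion you need, namely $\tau_n \uparrow \tau$ pointwise, holds regardless.
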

\begin{proof}
    Assume that $T_n > t$ for every $n$ without loss of generality. 
    Note that for any $S \in [0, T]$, the stopping problems \eqref{value_function_general_osp} with values $V_S(t, \pi)$ have the same gain functions and the same dynamics for the underlying processes, but the optimization sets of stopping times become larger as $S$ increases. 
    Thus, we obtain 
    $V_T(t, \pi) \ge \lim_{S \to T} V_S(t, \pi), \, \pi \in [0, 1]$, as well as the claimed monotonicity.
    To obtain the reverse inequality, fix any stopping time $\tau \leq T-t$, and,
    for any fixed $t \le S \leq T$, consider the stopping time $\tau_S \coloneqq \tau \wedge (S - t)$. It is clear that $\lim_{S \to T} \tau_S = \tau$ almost surely.
    Therefore, we obtain
    \begin{equation*}
        \lim_{S \to T} V_S(t, \pi) 
        \ge \lim_{S \to T} \ex\left[ G\big(t + \tau_S, \Pi^\pi(t + \tau_S)\big)\right]
        = \ex\left[ G\big(t + \tau, \Pi^\pi(t + \tau)\big)\right].
    \end{equation*}
    Here, the equality follows from the bounded convergence theorem and the martingale convergence theorem for the process $\Pi^\pi(\cdot)$. Since $\tau \leq T-t$ was arbitrary, the claim follows.
\end{proof}

\subsection{Proof of Proposition \ref{prop_gain_functions_equivalnce}}\label{subsec_app_gain_equivalence}

    The result is trivially true for $\pi \in \{0,1\}$ so we will assume $\pi \in (0,1)$. First, we note that for all $(t, \pi) \in [0, T] \times (0, 1)$ we have $\widetilde{G}_T(t, \pi) \le G(t, \pi)$, which implies $\widetilde{V}_T(t, \pi) \le V_T(t, \pi)$. 
    Therefore, to obtain the equality \eqref{equality_value_functions}, it suffices to show that, for every $(t, \pi) \in [0, T) \times (0, 1)$ and $\eps > 0$, there exists a stopping time $\tau_\eps$ such that
    \begin{equation}\label{inequality_j_tilde_v_plain}
        \widetilde{J}_T(t, \pi, \tau_\eps) \ge V_T(t, \pi) - \eps.
    \end{equation}
    To this effect, fix any $(t, \pi) \in [0, T) \times (0, 1)$ and $\eps > 0$, and let the stopping time $\tau^*_\eps$ be $\eps$--optimal for the problem $V_T(t, \pi)$, i.e., $\tau^*_\eps$ satisfy $J_T(t, \pi, \tau^*_\eps) \ge V_T(t, \pi) - \eps$. We now consider separately two cases: $T = \infty$ and $T < \infty$. 

    In the former case, i.e., when $T = \infty$, the inequality \eqref{inequality_j_tilde_v_plain} holds for the stopping time 
    \begin{equation*}
        \tau_\eps \coloneqq
        \begin{cases}
            \tau^*_\eps, & G\left(t + \tau^*_\eps, \Pi^\pi(\tau^*_\eps)\right) > 0, \\
            \infty, & \text{otherwise}.
        \end{cases}
    \end{equation*}
    Indeed, for such a stopping time we have
    \begin{equation*}
    \begin{split}
        \widetilde{J}_T(t, \pi, \tau_\eps) 
        &=
        \ex \left[\widetilde{G}_T\left(t+\tau^*_\eps, \Pi^\pi(\tau^*_\eps)\right) \cdot \mathbf{1}_{\{G\left(t+\tau^*_\eps, \Pi^\pi(\tau^*_\eps)\right) > 0 \}} \right]
        \\&=
        \ex \left[G\left(t+\tau^*_\eps, \Pi^\pi(\tau^*_\eps)\right) \cdot \mathbf{1}_{\{G\left(t+\tau^*_\eps, \Pi^\pi(\tau^*_\eps)\right) > 0 \}} \right]
        \\&\ge 
        J_T(t, \pi, \tau^*_\eps) 
        \ge
        V_T(t, \pi)) - \eps.
    \end{split}
    \end{equation*}
    Here, the first equality follows by the definition of $\tau_\eps$, the second from the set equality
    \begin{equation*}
        \{(t, \pi) \in [0, \infty) \times (0, 1): \widetilde{G}_\infty(t, \pi) > 0\} 
        =
        \{(t, \pi) \in [0, \infty) \times (0, 1): G_\infty(t, \pi) > 0\},
    \end{equation*}
    and the last inequality from the choice of $\tau^*_\eps$.

    As for the case $T < \infty$, instead of showing \eqref{inequality_j_tilde_v_plain}, we will obtain an equivalent statement that there exists a sequence of stopping times $\left\{\tau_\eps^{(n)}\right\}_{n=1}^\infty$ such that 
    \begin{equation}\label{j_tilde_v_plain_inequality}
        \liminf_{n \to \infty} \widetilde{J}_T\left(t, \pi, \tau_\eps^{(n)}\right) \ge V_T(t, \pi)) - \eps.
    \end{equation}
    For such a sequence, consider stopping times 
    \begin{equation*}
        \tau_\eps^{(n)} = 
        \begin{cases}
            \tau^*_\eps, & \tau^*_\eps \le t_n \, \text{ and } \, \widetilde G\left(t + \tau^*_\eps, \Pi^\pi\left(\tau^*_\eps\right)\right)> 0,
            \\
            t_n, & \tau^*_\eps > t_n \, \text{ and } \, \widetilde G\left(t + t_n, \Pi^\pi(t_n)\right) > 0,
            \\
            T-t, &\text{otherwise},
        \end{cases}
    \end{equation*}
    for $n \in \mathbb{N}$, with $t_n \coloneqq T-t-1/n$. 
    Due to the choice of $\tau^*_\eps$, in order to obtain \eqref{j_tilde_v_plain_inequality}, it suffices to show that 
    \begin{equation*}
        \liminf_{n \to \infty} \widetilde{J}_T\left(t, \pi, \tau_\eps^{(n)}\right) \ge J_T(t, \pi, \tau^*_\eps),
    \end{equation*}
    and this follows from the following chain of inequalities:
    \begin{equation*}
        \begin{split}
            \liminf_{n \to \infty} &\widetilde{J}_T(t, \pi, \tau_\eps^{(n)})
            - J_T(t, \pi, \tau^*_\eps) =
            \liminf_{n \to \infty} 
                \ex\left[ 
                    \widetilde{G}_T\left(
                        t + \tau_\eps^{(n)}, \Pi^\pi\left(\tau_\eps^{(n)}\right)
                    \right)
                \right] 
                -
                \ex\Big[
                    G\left(
                        t + \tau^*_\eps, \Pi^\pi(\tau^*_\eps)
                    \right) 
                \Big] 
            \\&\ge
            \liminf_{n \to \infty}
                \ex\left[ 
                    \Big(
                        G\left(
                            t + t_n, \Pi^\pi(t_n)
                        \right)  
                        -
                        G\left(
                            t + \tau^*_\eps, \Pi^\pi(\tau^*_\eps)
                        \right) 
                    \Big)
                    \cdot
                    \mathbf{1}_{
                        \left\{
                        \tau^*_\eps \in (t_n, T-t]\right\}
                    }
                \right]
            \\&\ge
                \liminf_{n \to \infty} \ex\left[\inf_{r, s \in [t_n, T-t]}
                    \Big(
                        G\left(
                            t + r, \Pi^\pi(r)
                        \right)  
                        -
                        G\left(
                            t + s, \Pi^\pi(s)
                        \right) 
                    \Big)
                \right]
            = 0.
        \end{split}
    \end{equation*}
    Here, the first inequality follows from the choice of $\tau_\eps^{(n)}$ and the fact that $G(t+\tau^*_\eps, \Pi^\pi(\tau^*_\eps)) \equiv 0$ on the set $\{\tau^*_\eps \leq t_n, \widetilde{G}(t+\tau^*_\eps, \Pi^\pi(\tau^*_\eps)) \leq 0\}$, and the last equality follows from the bounded convergence theorem and the continuity of the process $G(t+\cdot, \Pi^\pi(\cdot))$.
    \hfill \qed {\parfillskip0pt\par}

\subsection{Proof of Proposition \ref{prop_structure_of_regions}}\label{subsec_app_structure_of_regions}
    We have clearly $V_T(T, \cdot) = G(T, \cdot)$, thus also \eqref{regions_via_boundary} by letting $b_T(T) = 0$. 
    For $t \in [0, T)$ it suffices to show that, if at some space-time point $(t, \pi)$ it is better to stop, i.e., $V_T(t, \pi) = G(t, \pi)$, then $V_T(t, p) = G(t, p)$ for any $p \ge \pi$ as well. Since $V_T(\cdot,\cdot)$ is continuous by Proposition \ref{prop_properties_of_value_func}, which is proved independently, and $[0,T] \times \{0,1\} \subseteq \mathcal{S}_T$ holds trivially, the representation \eqref{regions_via_boundary} will hold on $[0, T)$ with the function $b_T: [0, T) \to [0, 1]$, defined by
    \begin{equation}\label{boundary_definition_2}
        b_T(t) \coloneqq \inf \{\pi \in (0, 1): V_T(t, \pi) = G(t, \pi) \},
    \end{equation}
    with the convention $\inf \emptyset = 1$ since the state space of our problem is $[0, 1]$ (note that the same representation \eqref{boundary_definition_2} will then also hold on the whole interval $[0, T]$).
    
    To obtain the above statement for the functions $V_T(\cdot, \cdot)$ and $G(\cdot, \cdot)$, we obtain it first for the functions $\widetilde{V}_T(\cdot, \cdot)$ and $\widetilde{G}_T(\cdot, \cdot)$, defined in \eqref{gain_function_with_indicator}, \eqref{value_function_with_indicator_gain}, and then show
    \begin{equation}\label{stopping_sets_equality}
    \begin{split}
        \{(t, \pi) \in [0, T) &\times (0, 1] : V_T(t, \pi)= G(t, \pi)\} \\&= \{(t, \pi) \in [0, T) \times (0, 1] : \widetilde{V}_T(t, \pi) = \widetilde{G}_T(t, \pi)\}.
        \end{split}
    \end{equation}
    
    Now, suppose that for some point $(t, \pi)$ we have $\widetilde{V}_T(t, \pi) = \widetilde{G}_T(t, \pi)$, or equivalently 
    \begin{equation}\label{inequality_for_stopping_points}
        \ex\left[\widetilde{G}_T(t + \tau, \Pi^\pi(\tau))\right] \le \widetilde{G}_T(t, \pi), \quad \forall \, \tau \in \mathcal{T}_{T-t}.
    \end{equation}
    Note that the gain function of \eqref{gain_function_with_indicator} is $\widetilde{G}_T(t, \pi) 
        =
        \pi \cdot (c_1(t)+c_0(t)) \cdot \mathbf{1}_{\{t < T\}} 
        -
        c_0(t) \cdot \mathbf{1}_{\{t < T\}}$
    is linear in the spatial variable. Consequently, for any $p > \pi$, the point $(t, p)$ will also satisfy \eqref{inequality_for_stopping_points} for any $\tau \in \mathcal{T}_{T-t}$, and thus belong to the stopping region, by virtue of the following chain of inequalities:
    \begin{align*}
            \widetilde{G}_T&(t, p) 
            =
            p \cdot (c_1(t)+c_0(t)) \cdot \mathbf{1}_{\{t < T\}} 
            - 
            c_0(t)\cdot \mathbf{1}_{\{t < T\}}
            \\&= 
            \pi \cdot (c_1(t)+c_0(t)) \cdot \mathbf{1}_{\{t < T\}}
            -
            c_0(t)\cdot \mathbf{1}_{\{t < T\}}
            + 
            (p - \pi) \cdot (c_1(t)+c_0(t)) \cdot \mathbf{1}_{\{t < T\}}
            \\&\ge
            \ex\left[\Pi^\pi(\tau) \cdot (c_1(t+\tau)+c_0(t+\tau)) \cdot \mathbf{1}_{\{t+\tau < T\}}- c_0(t+\tau)\cdot \mathbf{1}_{\{t+\tau < T\}}\right]
            \\& \quad \quad
            + 
            (p - \pi) \cdot (c_1(t)+c_0(t)) \cdot \mathbf{1}_{\{t < T\}}
            \\&=
            \ex\left[\Pi^p(\tau) \cdot (c_1(t+\tau)+c_0(t+\tau)) \cdot \mathbf{1}_{\{t+\tau < T\}} - c_0(t+\tau)\cdot \mathbf{1}_{\{t+\tau < T\}}\right] 
            \\&\quad \quad
            + 
            \ex\big[(\Pi^\pi(\tau) - \Pi^p(\tau))
             \left. \cdot (c_1(t+\tau)+c_0(t+\tau)) \cdot \mathbf{1}_{\{t+\tau < T\}}\right]
            +
            (p - \pi) \cdot (c_1(t)+c_0(t)) \cdot \mathbf{1}_{\{t < T\}}
            \\&\ge
            \ex\left[\Pi^p(\tau) \cdot (c_1(t+\tau)+c_0(t+\tau)) \cdot \mathbf{1}_{\{t+\tau < T\}} - c_0(t+\tau)\cdot \mathbf{1}_{\{t+\tau < T\}}\right] 
            =
            \ex\left[\widetilde{G}_T(t+\tau, \Pi^p(\tau)\right].
    \end{align*}
    The first inequality follows from the choice of $(t, \pi)$ and \eqref{inequality_for_stopping_points}, and the second from the optional sampling theorem for the process $(\Pi^p(s) - \Pi^\pi(s)) \cdot (c_1(t+s)+c_0(t+s)) \cdot \mathbf{1}_{\{t+s < T\}}, \, 0 \le s \le T-t$, which is a supermartingale. Indeed, the function $(c_1(\cdot)+c_0(\cdot)) \cdot \mathbf{1}_{\{\cdot < T\}}$ is non-increasing due to \ref{reg_assm_3}, while the process $\Pi^p(\cdot) - \Pi^\pi(\cdot)$ is a non-negative martingale by the martingale properties of the processes $\Pi^\pi(\cdot), \, \Pi^p(\cdot)$ and a well-known comparison result (Proposition 5.2.18 of \cite{BMSC}).
    
    It remains to prove \eqref{stopping_sets_equality}. However, this follows from the observations below:
    \begin{enumerate}[topsep=-5pt,itemsep=-1ex,partopsep=1ex,parsep=1ex]
        \item For all $(t, \pi) \in [0, T) \times (0, 1]$ we have  $V_T(t, \pi) = \widetilde{V}_T(t, \pi)$;
        \item For all $(t, \pi) \in [0, T) \times (0, 1]$ with either $G(t, \pi) > 0$ or $\widetilde{G}_T(t, \pi) > 0$, we have $G(t, \pi) = \widetilde{G}_T(t, \pi)$;
        \item For all $(t, \pi) \in [0, T) \times (0, 1]$ we have $\widetilde{V}_T(t, \pi) > 0$, and consequently $V_T(t, \pi) > 0$ as well.
    \end{enumerate}
    Here, the first statement follows from Proposition \ref{prop_gain_functions_equivalnce}, the second from the definitions of the gain functions \eqref{gain_function} and \eqref{gain_function_with_indicator}, and the third from the fact that, for any $(t, \pi) \in [0, T) \times (0, 1]$, the expected payoff $\widetilde{J}(t, \pi, \tau_{1-\eps})$, with $\tau_{1-\eps} \coloneqq \inf\{s \ge 0: \Pi^\pi(s) \ge 1 - \eps \}$, is strictly positive for small enough $\eps$, since $\pr(\tau_{1-\eps} < T-t) > 0$ for every $\eps > 0$.
    \hfill \qed {\parfillskip0pt\par}

\subsection{Proof of Proposition \ref{prop_properties_of_value_func}}\label{subsec_app_prop_properties_value_func}
    We will first prove the following lemma regarding the continuity of the value function $V_T(\cdot, \cdot)$ in the time argument for any fixed $\pi \in [0,1]$.
    \begin{Lem} \label{lem_time_continuity_fixed_pi}
        Fix any $T \in [0,\infty]$. For any $\pi \in [0,1]$, the mapping $t \mapsto V_T(t,\pi)$ is uniformly continuous, and this continuity is uniform with respect to $\pi$.
    \end{Lem}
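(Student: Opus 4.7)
The plan is to estimate $|V_T(t_1,\pi)-V_T(t_2,\pi)|$ for $t_1<t_2$ in $[0,T]$ and $\pi\in[0,1]$ directly by comparing value-function suprema on a common set of admissible stopping times. The underlying observation, which follows from the $1$-Lipschitz property of $(\cdot)^+$, is that the gain function $G$ of \eqref{gain_function} satisfies
\begin{equation*}
|G(s,x)-G(s',x)|\;\le\;\omega_{c_0}(|s-s'|)+\omega_{c_1}(|s-s'|),\qquad |G(s,x)-G(s,x')|\;\le\;\bigl(c_0(0)+c_1(0)\bigr)|x-x'|,
\end{equation*}
where $\omega_{c_i}$ denotes the modulus of continuity of $c_i$ on the compact interval $[0,T]$ (using the one-point compactification when $T=\infty$), which is well defined under \ref{reg_assm_2}. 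Crucially, both bounds are uniform in the coordinate held fixed, which is exactly what is needed for the uniformity in $\pi$.

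The easy direction uses the inclusion $\mathcal{T}_{T-t_2}\subseteq\mathcal{T}_{T-t_1}$: the first bound above gives, with $\delta\coloneqq t_2-t_1$,
\begin{equation*}
V_T(t_2,\pi)-V_T(t_1,\pi)\;\le\;\sup_{\tau\in\mathcal{T}_{T-t_2}}\ex\bigl[G(t_2+\tau,\Pi^\pi(\tau))-G(t_1+\tau,\Pi^\pi(\tau))\bigr]\;\le\;\omega_{c_0}(\delta)+\omega_{c_1}(\delta).
\end{equation*}
For the reverse direction, I would fix $\eps>0$ and pick an $\eps$-optimal $\tau_\eps\in\mathcal{T}_{T-t_1}$ for $V_T(t_1,\pi)$, and set $\tilde\tau_\eps\coloneqq\tau_\eps\wedge(T-t_2)\in\mathcal{T}_{T-t_2}$, so that
\begin{equation*}
V_T(t_1,\pi)-V_T(t_2,\pi)\;\le\;\eps+\ex\bigl[G(t_1+\tau_\eps,\Pi^\pi(\tau_\eps))-G(t_2+\tilde\tau_\eps,\Pi^\pi(\tilde\tau_\eps))\bigr].
\end{equation*}
On $\{\tau_\eps\le T-t_2\}$ the truncation is inactive and the integrand is again bounded by $\omega_{c_0}(\delta)+\omega_{c_1}(\delta)$ via the first display.

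The main obstacle is controlling the integrand on $\{\tau_\eps>T-t_2\}$, where both temporal and spatial coordinates shift simultaneously. The plan is to insert $G(t_1+\tau_\eps,\Pi^\pi(\tilde\tau_\eps))$ and apply the triangle inequality: since $|t_1+\tau_\eps-(t_2+\tilde\tau_\eps)|=|t_1+\tau_\eps-T|\le\delta$ on this event, the time-increment piece is at most $\omega_{c_0}(\delta)+\omega_{c_1}(\delta)$, while the space-increment piece is at most $\bigl(c_0(0)+c_1(0)\bigr)|\Pi^\pi(\tau_\eps)-\Pi^\pi(\tilde\tau_\eps)|$. Using the martingale representation \eqref{cond_process_with_arbitrary_position} together with the It\^o isometry, optional sampling (justified since $\tau_\eps\le T-t_1<\infty$), the boundedness $a\Pi^\pi(1-\Pi^\pi)\le a/4$, and Cauchy--Schwarz, the expected space increment on this event is at most $(a/4)\sqrt{\delta}$. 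Combining all estimates yields $|V_T(t_1,\pi)-V_T(t_2,\pi)|\le 2\omega_{c_0}(\delta)+2\omega_{c_1}(\delta)+C\sqrt{\delta}+\eps$ with $C$ depending only on $a$ and $c_i(0)$; sending $\eps\downarrow 0$ produces uniform continuity in $t$ with a modulus independent of $\pi$. For $T=\infty$ the argument simplifies: $\mathcal{T}_{T-t_1}=\mathcal{T}_{T-t_2}=\mathcal{T}_\infty$ so no truncation is needed, and continuity at the point at infinity follows from continuity of $c_0,c_1$ on the one-point compactification.
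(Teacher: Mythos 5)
Your proof is correct, and it takes a genuinely different route from the paper. The paper handles the finite-horizon case with a soft dominated-convergence argument: it introduces
\begin{equation*}
    f(\delta) \coloneqq \ex\left[\sup_{(\pi,u,v,t,s)\in A_\delta}\bigl|G(u,\Pi^\pi(t))-G(v,\Pi^\pi(s))\bigr|\right]
\end{equation*}
and proves $f(\delta)\to 0$ by invoking the uniform continuity of $G$ together with the pathwise joint continuity of $(t,\pi)\mapsto\Pi^\pi(t)$, the latter requiring Gronwall's inequality and the Kolmogorov--Chentsov criterion. You replace this machinery with an explicit $L^2$ estimate: the It\^o isometry and Cauchy--Schwarz give $\ex|\Pi^\pi(\tau_\eps)-\Pi^\pi(\tilde\tau_\eps)|\le(a/4)\sqrt\delta$ directly from the SDE \eqref{cond_process_with_arbitrary_position}, since the diffusion coefficient is bounded by $a/4$ and the truncation window has length at most $\delta$. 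This is more elementary (Kolmogorov--Chentsov is not needed) and has the added benefit of producing an \emph{explicit} modulus of continuity, $\omega_{c_0}(\delta)+\omega_{c_1}(\delta)+C\sqrt\delta$, uniform in $\pi$. Both proofs share the same skeleton: the easy inequality comes from $\mathcal{T}_{T-t_2}\subseteq\mathcal{T}_{T-t_1}$, and the hard one from truncating an $\eps$-optimal stopping time at $T-t_2$. One detail worth spelling out for $T=\infty$: you appeal to the modulus of $c_i$ on the one-point compactification of $[0,\infty)$, but your displayed bound feeds the Euclidean increment $|s-s'|=|t_1-t_2|$ into $\omega_{c_i}$. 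This is fine because $c_i$ continuous on $[0,\infty]$ is in fact uniformly continuous on $[0,\infty)$ in the Euclidean metric (split into $[0,N]$ and a tail where $c_i$ is within $\eps$ of $c_i(\infty)$), but if instead you want a modulus in the compactification metric you should note that the shift $t\mapsto t+\tau$ is non-expansive for that metric (e.g.\ the arctangent metric), which is what the paper implicitly relies on when it writes the sup over $d(t,s)<\delta$.
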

    \begin{proof}
    For the case $T=\infty$, we note that $[0,\infty]$ is metrizable (see \cite{mandelkern}, for example). We will denote the associated metric by $d(t,s)$, $s,t \in [0,\infty]$, and we refer to uniform convergence with respect to this metric, under which $[0, \infty]$ is compact. We consider the two cases $T = \infty$ and $T < \infty$ separately.
    
    \noindent \underline{Case \#1, $T = \infty$:} Fix $\pi \in [0, 1]$, $\eps > 0$, and consider any $t_1, t_2 \in [0, \infty]$. Let $\tau_\eps \in \mathcal{T}_\infty$ be an $\eps$-optimal stopping time that satisfies $V_\infty(t_1, \pi) \leq J_\infty(t_1, \pi, \tau_\eps) + \eps$,
    where the function ${J}_\infty(\cdot, \cdot, \cdot)$ is defined in \eqref{expected_reward_general_pi}. We obtain
    \begin{equation*}
        \begin{split}
            {V}_\infty(t_1, \pi) - {V}_\infty(t_2, \pi) 
            &\le
            {J}_\infty(t_1, \pi, \tau_\eps) + \eps - {J}_\infty(t_2, \pi, \tau_\eps) 
            \\&= 
            \eps + \ex \big[ {G}\left(t_1 + \tau_\eps, \Pi^{\pi}(\tau_\eps) \right) - {G}\left(t_2 + \tau_\eps, \Pi^{\pi}(\tau_\eps) \right) \big]
            \\&\le 
            \eps + \sup_{\tau \in \mathcal{T}_\infty} \ex \Big[ \left| {G}\left(t_1 + \tau, \Pi^{\pi}(\tau) \right) - {G}\left(t_2 + \tau, \Pi^{\pi}(\tau) \right) \right| \Big].
        \end{split}
    \end{equation*}
    Since $\eps$ was chosen arbitrarily, we have by symmetry
    \begin{equation}\label{upper_bound_onn_difference_value_funct}
        |{V}_\infty(t_1, \pi) - {V}_\infty(t_2, \pi)| 
        \leq 
        \sup_{\tau \in \mathcal{T}_\infty} \ex \left[ \Big| {G}\left(t_1 + \tau, \Pi^{\pi}(\tau) \right) - {G}\left(t_2 + \tau, \Pi^{\pi}(\tau) \right) \Big| \right].
    \end{equation}
    
    The functions $c_i(\cdot), \, i = 0, 1$ are continuous on the compact metric space $[0,\infty]$, thus uniformly continuous. Therefore, for the fixed above $\eps > 0$, there exists $\delta > 0$ such that $d(t_1,t_2) < \delta$ implies $|c_i(t_1) - c_i(t_2)| < \eps /4 $, $i = 0, 1$. Furthermore, for any $t_1, t_2 \ge 0$ with $d(t_1,t_2) < \delta$, we have 
    \begin{equation*}
        \begin{split}
            \sup_{\tau \in \mathcal{T}_\infty} \ex &\left[ \Big|{G}\left(t_1 + \tau, \Pi^{\pi}(\tau) \right) - {G}\left(t_2 + \tau, \Pi^{\pi}(\tau) \right) \Big|\right] \\
            & \leq
            \sup_{\tau \in \mathcal{T}_\infty} \ex \left[ \left|\Pi^{\pi}(\tau) \Big(c_1(t_1 + \tau) - c_1(t_2 + \tau) \Big)
            - \big(1-\Pi^{\pi}(\tau)\big) \Big(c_0(t_1 + \tau) - c_0(t_2 + \tau) \Big) \right| \right] 
            \\
            &\leq \sup_{\substack{t,s \in [0, \infty]: \\ d(t,s) < \delta }} \big|c_1(t) - c_1(s)\big| + \sup_{\substack{t,s \in [0, \infty]: \\ d(t,s) < \delta }} \big|c_0(t) - c_0(s)\big| \leq \frac{\eps}{2} < \eps.
        \end{split}
    \end{equation*}
    Combining the above bound with the inequality \eqref{upper_bound_onn_difference_value_funct}, we obtain 
    $\Big|{V}_\infty(t_1, \pi)- {V}_\infty(t_2, \pi)\Big| < \eps$,
    for any $t_1, t_2$ with $d(t_1,t_2) < \delta$. Since $\delta > 0$ does not depend on $\pi$, this proves the result for $V_\infty(\cdot, \cdot)$.
    
    \noindent \underline{Case \#2: $T < \infty$:} Fix $\pi \in [0, 1]$ and $\eps > 0$.  Choose $\delta > 0$ such that
    \begin{equation*}
        f(\delta) \coloneqq \ex\left[
        \sup_{(\pi,u,v,t, s)\in A_\delta}
            \Big|
                G\left(u, \Pi^\pi(t)\right) - G\left(v, \Pi^\pi(s)\right)
            \Big|
    \right] < \frac{\eps}{2}
    \end{equation*}
    with $G(\cdot, \cdot)$ as in \eqref{gain_function_with_indicator} and
    $A_\delta \coloneqq \Big\{(\pi,u,v,t,s) \in [0, 1]\times [0, T]^4:  |u-v| \leq \delta, |t - s| \leq \delta \Big\}.$ 
    Such a choice of $\delta$ is possible due to the uniform continuity of $c_i(\cdot)$, $i = 0, 1$, the uniform continuity and boundedness of the function $G(\cdot, \cdot)$ on $[0, T] \times [0,1]$, and the (pathwise) uniform continuity of the mapping $(t, \pi) \mapsto \Pi^\pi(t)$ on $[0, T] \times [0,1]$. The continuity of the mapping $(t, \pi) \mapsto \Pi^\pi(t)$ follows from a standard application of Gronwall's inequality and the Kolmogorov-Chentsov theorem (see, e.g., Theorem 4.3 of Kallenberg \cite{Kallenberg}). As a result, the dominated convergence theorem implies $\lim_{\delta \rightarrow 0} f(\delta) = 0$.
    
    Without loss of generality, suppose $s < t < s+\delta$. We first show that ${V}_T(t, \pi) - {V}_T(s, \pi) < \eps$. Let $\tau_{\eps,t}$ be an $\eps/2$--optimal for the problem with initial state $(t, \pi)$, and note that $\tau_{\eps,t}$ is a feasible stopping time for the problem with initial state $(s, \pi)$. Therefore, from our choice of $\delta$, we obtain
    \begin{equation*}
        \begin{split}
            {V}_T(t, \pi) - {V}_T(s, \pi) &\leq {J}(t, \pi, \tau_{\eps, t}) + \frac{\eps}{2} - {J}(s, \pi, \tau_{\eps,t}) \\
            & \le
            \ex\left[
                \Big|G(t+\tau_{\eps, t}, \Pi^\pi(\tau_{\eps, t})) -
                G(s+\tau_{\eps, t}, \Pi^\pi(\tau_{\eps, t}))\Big|
                \right] + \frac{\eps}{2}
            < \eps.
        \end{split}
    \end{equation*}
    
    We now show that ${V}_T(s, \pi) - {V}_T(t, \pi) < \eps$. Let $\tau_{\eps, s}$ be an $\eps/2-$optimal stopping time for the problem with initial state $(s, \pi)$, and define $\sigma_{\eps, s} \coloneqq \tau_{\eps,s} \wedge(T-t)$, so that $\sigma_{\eps, s}$ is feasible for the problem started at $(t, \pi)$. Note that $|s+\tau_{\eps,s} - (t+\sigma_{\eps,s})| < \delta$ and $|\tau_{\eps,s} - \sigma_{\eps,s}| < \delta$. Therefore,
    \begin{align*}
        {V}_T(s, \pi) - {V}_T(t, \pi)
        & \leq {J}_T(s, \pi, \tau_{\eps, s}) + \frac{\eps}{2} - {J}_T(t, \pi, \sigma_{\eps, s})
        \\
        &\leq \ex\left[
                    \Big|
                        G\big(s + \tau_{\eps, s}, \Pi^\pi(\tau_{\eps, s} )\big) 
                        -
                        G\big(t + \sigma_{\eps, s}, \Pi^\pi(\sigma_{\eps, s} )\big) 
                    \Big|
                \right]
        + \frac{\eps}{2}
        \\
        & \leq f(\delta) + \frac{\eps}{2}
        < \eps.
    \end{align*}
    
    Since all of the above arguments hold irrespective of the choice of $\pi$, this proves the continuity of the function $V_T(\cdot, \pi)$, uniformly in $\pi$.
    \end{proof}
    
    \noindent \textit{\textbf{Proof of Proposition \ref{prop_properties_of_value_func}:}} We first fix $T < \infty$ and then prove the infinite horizon case via limit-based arguments.
    
    \underline{Proof of \ref{value_func_continuity}:} By Gronwall's inequality and It\^{o} isometry, 
    \begin{equation} \label{GronwallforPi}
        \ex \left[ \sup_{0 \leq s \leq T} \big|\Pi^\pi(s) - \Pi^p(s)\big|^2\right] \leq e^{T^2}|\pi-p|^2,
    \end{equation}
    for any $\pi, p \in [0,1]$. It is easy to see then that
    \begin{equation} \label{bad_Lipschitz_value}
        \begin{split}
            \big|V_T(t,\pi) - V_T(t, p)\big| &\leq \sup_{\tau \in \mathcal{T}_{T-t}} \big|J_T(t,\pi,\tau) - J_T(t,p,\tau) \big|
            \\ &\leq \Big(c_0(0) + c_1(0)\Big) \;\ex\left[ \sup_{0 \leq s \leq T} \big|\Pi^\pi(s) - \Pi^p(s) \big| \right]
            \\& \leq \Big(c_1(0) + c_0(0)\Big) \, e^{T^2/2} \, |\pi - p|, 
        \end{split}
    \end{equation}
    where the first inequality follows from a similar argument to that of \eqref{upper_bound_onn_difference_value_funct}, the second follows from the fact that $x \mapsto x^+$ is Lipschitz continuous with constant $1$ and $c_1(\cdot) + c_0(\cdot)$ is a non-increasing function, and the last from H\"older's inequality and \eqref{GronwallforPi}. Coupling \eqref{bad_Lipschitz_value} with Lemma \ref{lem_time_continuity_fixed_pi} shows that $V_T(\cdot,\cdot)$ is continuous and establishes \ref{value_func_continuity}.

    We now proceed to prove \ref{value_func_convexity}--\ref{value_func_Lipschitz_pi} in order. All properties hold for the case $t=T$ by the equality $V_T(T, \cdot) \equiv G(T, \cdot)$ and the fact that properties \ref{value_func_convexity}--\ref{value_func_Lipschitz_pi} are vacuous for the function $G(T, \cdot)$. Therefore, from now on, we will only consider the case $t \in [0, T)$.

    \underline{Proof of \ref{value_func_convexity}:} We first prove the convexity of $\widetilde{V}_T(t, \cdot)$ in \eqref{value_function_with_indicator_gain}, then invoke the first equality of \eqref{equality_value_functions}. 
    Moreover, to obtain the convexity of $\widetilde{V}_T(t, \cdot)$, it will be useful to revert to the original optimal stopping problem and use the observations of Subsection \ref{subsubsec_reversion_back}. More specifically, we will work with the conditional process $\widehat \Pi^\pi(\cdot)$ of \eqref{cond_process_general_pi} and use the representation \eqref{equality_value_functions} for the value function $\widetilde{V}_T(\cdot, \cdot)$. Fix any $\pi \in [0, 1]$ and consider any stopping time $\tau \in \widehat{\mathcal{T}}^\pi_{T-t}$. For this choice of $\pi$ we recall the definitions of $\theta^\pi$ and $W^\pi$ from Subsection \ref{subsubsec_reversion_back}. Applying the tower property of conditional expectations with the representation 
    $\widehat \Pi^\pi(\tau)=\ex^\pi\left[\mathbf{1}_{\{\theta^\pi = 1\}} \mid \mathcal{F}^{X^\pi}(\tau)\right]$
    we obtain,
    \begin{equation}
        \begin{split}
            \ex^{\pi}\left[ \widetilde{G}_T\left(t + \tau, \widehat \Pi^\pi(\tau)\right)\right] 
            &=
            \ex^\pi\Big[
                \Big(\widehat \Pi^\pi(\tau) \cdot (c_1(t + \tau)+c_0(t + \tau)) - c_0(t + \tau)\Big)
                \cdot 
                \mathbf{1}_{\{t + \tau < T\}}
            \Big]
            \\&=
            \ex^\pi\Big[
                \Big(\mathbf{1}_{\{\theta^\pi = 1\}} \cdot (c_1(t + \tau)+c_0(t + \tau)) - c_0(t + \tau)\Big)
                \cdot
                \mathbf{1}_{\{t + \tau < T\}}
            \Big].
        \end{split}\label{eqn.g.motivation}
    \end{equation}
    
    It is a very subtle fact that we may assume that $\tau = f_\tau(\theta^\pi, W^\pi)$ for some measurable function $f_\tau(\cdot, \cdot)$ without loss of generality. We discuss this subtlety in detail in Remark \ref{naturalfiltrationrelaxationremark} below. Motivated by the last equality in \eqref{eqn.g.motivation} we define the function $\mathfrak{g}_\tau:\{0, 1\} \to \mathbb{R}$ by 
    \begin{equation} \label{gfunction}
        \begin{split}
            \mathfrak{g}_\tau(x) \coloneqq \ex^\pi\bigg[\bigg(\mathbf{1}_{\{x = 1\}} \Big(c_1(t + f_\tau(x, W^\pi))&+c_0(t + f_\tau(x, W^\pi))\Big)
            \\&- c_0(t + f_\tau(x, W^\pi))\bigg)\mathbf{1}_{\{t + f_\tau(x, W^\pi) < T\}}\bigg].
        \end{split}
    \end{equation}
    Note that since $\mathfrak{g}_\tau(\cdot)$ only depends on the function $f_\tau(\cdot, \cdot)$ and the law of the Brownian motion $W^\pi(\cdot)$, it does not depend on $\pi$. With the help of the functions $f_\tau(\cdot, \cdot)$ and $\mathfrak{g}_\tau(\cdot)$,
    and the independence of $W^\pi$ and $\theta^{\pi}$, we can rewrite the expected reward as
    \begin{equation*}
        \ex^{\pi}\left[ \widetilde{G}_T\left(t + \tau, \widehat \Pi^\pi(\tau)\right)\right] 
        =
        \ex^\pi \left[\mathfrak{g}_\tau(\theta^\pi) \right] 
        =
        \pi \mathfrak{g}_\tau(1) + (1-\pi) \mathfrak{g}_\tau(0).
    \end{equation*}

    We are finally ready to prove the convexity of $\widetilde{V}_T(t, \cdot)$. Fix $\pi_1, \pi_2 \in [0,1]$, and $\lambda \in [0,1]$, and let $\pi_\lambda \coloneqq \lambda \pi_1 + (1-\lambda) \pi_2$. For any stopping time $\tau$ in the natural filtration of $\widehat \Pi^{\pi_\lambda}(\cdot)$, we have
    \begin{equation} \label{convexity_ineq}
        \begin{split}
            J_T(t, \pi_\lambda, \tau) = \pi_\lambda \mathfrak{g}_\tau(1) + (1-\pi_\lambda) \mathfrak{g}_\tau(0) &= \lambda J_T(t,\pi_1, \tau_1) + (1-\lambda) J_T(t, \pi_2, \tau_2) \\
            &\leq \lambda V_T(t,\pi_1) + (1-\lambda) V_T(t,\pi_2).
        \end{split}
    \end{equation}
    Here, $\tau_i \coloneqq f_\tau\left(\theta^{\pi_i}, W^{\pi_i}\right)$, $i = 1, 2$, are stopping times with respect to the natural filtrations of the processes $\widehat \Pi^{\pi_i}(\cdot)$, $i = 1,2$, respectively, as discussed in Remark \ref{naturalfiltrationrelaxationremark}, so the second equality follows from \eqref{eqn.g.motivation} and \eqref{gfunction}, and the inequality follows by definition. Taking the supremum over the appropriate set of stopping times $\tau$ on the left-hand side of \eqref{convexity_ineq} establishes \ref{value_func_convexity}.
    
    \underline{Proof of \ref{value_func_nondec}:}  We fix an arbitrary stopping time $\tau \in \mathcal{T}_{T - t}$, 
    and note that with $0 \leq \pi_1 \le \pi_2 \leq 1$, we have $0 \leq \Pi^{\pi_1}(t) \le \Pi^{\pi_2}(t) \leq 1$ for all $t \in [0, T]$ almost surely, by Proposition 5.2.18 of \cite{BMSC}. Therefore, since the functions $c_0(\cdot)$ and $c_1(\cdot)$ are non-negative, we obtain
    \begin{equation*}
        \begin{split}
            \Big(\Pi^{\pi_1}(t+s) &\cdot \big(c_1(t+s)  +c_0(t+s)\big) - c_0(t+s)\Big)^+ 
            \\& \le 
            \Big(\Pi^{\pi_2}(t+s) \cdot \big(c_1(t+s)+c_0(t+s)\big) - c_0(t+s)\Big)^+ \quad \forall \; 0 \le s \le T-t \text{ a.s.,}
        \end{split}
    \end{equation*}
    and thus $J_T(t, \pi_1, \tau) \le J_T(t, \pi_2, \tau)$,
    which implies the statement of part \ref{value_func_nondec}.
    
    \underline{Proof of \ref{value_func_Lipschitz_pi}:} As in the proof of part \ref{value_func_convexity}, we prove statement \ref{value_func_Lipschitz_pi} for the value function $\widetilde{V}_T(t, \cdot)$ in the representation \eqref{equality_value_functions}, and then invoke the first equality of \eqref{equality_value_functions}.
    We use again the formulation from Subsection \ref{subsubsec_reversion_back}.  Fix any $0 \leq \pi_1 \leq \pi_2 \leq 1$.  By the definition of the value function, for every $\eps > 0$, there exists an $\eps$-optimal stopping time $\tau_\eps \in \widehat{\mathcal{T}}^{\pi_2}_{T-t}$ that satisfies
    \begin{equation*}
        \widetilde{V}_T(t, \pi_2) \leq \ex^{\pi_2}\left[ \widetilde{G}_T\left(t + \tau_\eps, \widehat \Pi^{\pi_2}(\tau_\eps)\right)\right] + \eps.
    \end{equation*}
    Abusing notation, we denote the corresponding feasible stopping time for $\widehat \Pi^{\pi_1}(\cdot)$ referenced in Remark \ref{naturalfiltrationrelaxationremark} by $\tau_\eps$ as well. The trivial inequality 
    $\widetilde{V}_T(t, \pi_1) \ge \ex^{\pi_1}\left[ \widetilde{G}_T\left(t + \tau_\eps, \widehat \Pi^{\pi_1}(\tau_\eps)\right)\right]$
    and the fact that $\widetilde{V}_T(t, \cdot)$ is non-decreasing by \ref{value_func_nondec} imply
    \begin{equation}\label{inequality_chain_lipschitz_proof}
        \begin{split}
            0 \leq \widetilde{V}_T(t, \pi_2) - \widetilde{V}_T(t, \pi_1) 
            &\leq
            \ex^{\pi_2}\left[ \widetilde{G}_T\left(t + \tau_\eps, \widehat \Pi^{\pi_2}(\tau_\eps)\right)\right] - \ex^{\pi_1}\left[ \widetilde{G}_T\left(t + \tau_\eps, \widehat \Pi^{\pi_1}(\tau_\eps)\right)\right] + \eps 
            \\&=
            \pi_2 \cdot \mathfrak{g}_{\tau_\eps}(1) + (1-\pi_2) \cdot \mathfrak{g}_{\tau_\eps}(0) - \pi_1 \cdot  \mathfrak{g}_{\tau_\eps}(1) - (1-\pi_1) \cdot \mathfrak{g}_{\tau_\eps}(0) + \eps 
            \\&=
            (\pi_2 - \pi_1) \cdot (\mathfrak{g}_{\tau_\eps}(1) - \mathfrak{g}_{\tau_\eps}(0)) + \eps 
            \\&
            \leq |\pi_2 - \pi_1| \cdot (c_1(t) + c_0(t)) + \eps.
        \end{split}
    \end{equation}
    Here, the function $\mathfrak{g}_{\tau_\eps}(\cdot)$ is defined by analogy with \eqref{gfunction}, as an expectation with respect to the Brownian motion $W^\pi$. Consequently, it does not depend on the measure $\pr^{\pi}$, which justifies the first equality in \eqref{inequality_chain_lipschitz_proof}. The last inequality follows from the fact that $\mathfrak{g}_{\tau_\eps}(1) - \mathfrak{g}_{\tau_\eps}(0) \le c_1(t) + c_0(t)$ as is clear from the definition \eqref{gfunction}. As $\eps > 0$ was arbitrary, this suffices.

    Finally, we consider the case $T = \infty$. Properties \ref{value_func_convexity}--\ref{value_func_Lipschitz_pi} follow immediately from Lemma \ref{lem_value_incr_monotonically_T_upward} and \ref{value_func_convexity}--\ref{value_func_Lipschitz_pi} for the finite-horizon problem. The joint continuity property \ref{value_func_continuity} of $V_\infty(\cdot,\cdot)$ then follows immediately from the Lipschitz property \ref{value_func_Lipschitz_pi} and Lemma \ref{lem_time_continuity_fixed_pi}.
    \hfill \qed {\parfillskip0pt\par}

    \begin{remark} \label{naturalfiltrationrelaxationremark} \textbf{Regarding the Representation $\tau = f_\tau(\theta^\pi, W^\pi)$ in \eqref{gfunction}:}
        To see that we may assume such a representation, we first note that by Theorem 2.7 of Peskir and Shiryaev \cite{PesShi06}, the smallest optimal stopping time associated with $V_T(t,\pi)$ may be written as the first hitting time of the set $\mathcal{S}_T$, defined in \eqref{def_stopping_region}, by the continuous process $\big(t+\cdot, \widehat \Pi^\pi(\cdot)\big)$. Due to \ref{value_func_continuity}, $\mathcal{S}_T$ is a closed set, which implies, without use of the general Debut theorem for filtrations satisfying the usual conditions, that this hitting time is a stopping time with respect to the natural filtration of $\widehat \Pi^\pi(\cdot)$ (which indeed does not necessarily satisfy the usual conditions). Thus, the supremum over all $\widehat{\mathcal{T}}^\pi_{T-t}$ stopping times may be replaced by the supremum over all stopping times with respect to the natural filtration of $\widehat \Pi^\pi(\cdot)$. This fact holds, of course, for all the representations proved in Section \ref{sec_equivalent_formulations} of the paper. It is straightforward to show that any stopping time $\tau$ with respect to the natural filtration of $\widehat \Pi^\pi(\cdot)$ may be written as $f\big(\widehat \Pi^\pi(\cdot)\big)$ for some measurable function $f(\cdot)$. Furthermore, one may verify that for any $[0,1]$--valued continuous process $Y = \{Y(t)\}_{t \in [0,T]}$ on an \emph{arbitrary} probability space, $f(Y(\cdot))$ is a stopping time with respect to the natural filtration of $Y(\cdot)$. Since $\widehat \Pi^\pi(\cdot)$ is a measurable function of $\theta$ and $W^\pi(\cdot)$ through its relationship with $X^\pi(\cdot)$, described by \eqref{X_and_pi_relationship} and \eqref{cond_process_general_pi}, the desired representation follows.
    \end{remark}

\subsection{Proof of Proposition \ref{prop_Lipschitz_property_spatial_derivative}}\label{subsec_app_lipschitz}

We fix any $T \in [0, \infty]$, $0 \le t < t' < T$, and note that, for any $S \in [t', T)$, the functions $c_i(\cdot), \; i = 0,1$ are of class $C^2$ on the closed interval $[0,S]$, and $c_i(S) > 0$, $i = 0, 1$. The proof of Proposition \ref{prop_Lipschitz_property_spatial_derivative} relies on the relationship \eqref{american_call_value_fn} between the value functions for the original problem $\{V_S(\cdot, \cdot)\}_{0 \leq S < T}$ and the value functions for the appropriate American options $\{u_S(\cdot, \cdot)\}_{0 \leq S < T}$, and several bounds for the latter ones. Therefore, we first establish several results about the functions $\{u_S(\cdot, \cdot)\}_{0 \leq S < T}$ and proceed to prove Proposition \ref{prop_Lipschitz_property_spatial_derivative} afterward.

\begin{Lem}\label{lem_time_changed_american_option}
    For any $S \in [0, T)$ and $(t, y) \in [0, S] \times \mathbb{R}$, the value function $u_S(\cdot, \cdot)$ of \eqref{value_function_sup_american_option}--\eqref{american_call_value_fn} has the following representation:
    \begin{equation}\label{representation_u_time_changed}
        \begin{split}
            u_S(t, y) &= 
            \sup_{\theta \in \overline{\mathcal{T}}_1} 
            \ex\left[
                e^{\int_{0}^{\theta}(S-t) \, \beta_0(t+v(S-t)) \, dv } 
                \cdot
                \psi\left(
                    y 
                    + 
                    \int_{0}^{\theta}(S-t) \, \beta\big(t+v(S-t)\big) \, dv
                    +
                    a \sqrt{S-t} \cdot \overline{W}(\theta)
                    \right)
            \right].
        \end{split}
    \end{equation}
    Here, 
    $\overline{W} \coloneqq \left\{\overline{W}(t), \, 0 \le t \le 1\right\}$
    is a standard Brownian motion, $\overline{\mathcal{T}}_1$ is the family of $\overline{W}$--stopping times $\theta$ such that $\pr(\theta \le 1) = 1$, and we denote
    $\beta(t) \coloneqq \beta_1(t) - \beta_0(t) - a^2/2$.
\end{Lem}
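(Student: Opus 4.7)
The plan is to derive \eqref{representation_u_time_changed} from the definition \eqref{value_function_sup_american_option} of $u_S(t,y)$ by a deterministic time change in the integrals combined with Brownian scaling for the stochastic part. Fix $S \in [0, T)$ and $t \in [0, S)$; the case $t = S$ is trivial since then $u_S(S, y) = \psi(y)$ and both sides equal $\psi(y)$ when one takes the degenerate stopping time $\theta \equiv 0$. I would begin by making the substitution $v = t + (S-t)u$, $dv = (S-t)\,du$, in the deterministic integrals appearing in $Y^{t,y}(t+\tau)$ and in the discount factor, and by writing any admissible stopping time $\tau \in \mathcal{T}_{S-t}$ as $\tau = (S-t)\theta$ with $\theta \in [0,1]$. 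This immediately converts $\int_t^{t+\tau}\beta(v)\,dv$ into $\int_0^{\theta}(S-t)\beta(t + (S-t)u)\,du$ and $\int_t^{t+\tau}\beta_0(v)\,dv$ into $\int_0^{\theta}(S-t)\beta_0(t + (S-t)u)\,du$, which match the corresponding integrals in \eqref{representation_u_time_changed}.

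Next, I would handle the stochastic integral via Brownian scaling. Define
\begin{equation*}
    \overline{W}(u) \coloneqq \frac{1}{\sqrt{S-t}}\bigl(\overline{B}(t + (S-t)u) - \overline{B}(t)\bigr), \qquad 0 \le u \le 1.
\end{equation*}
By the scaling property and the strong Markov property of Brownian motion at the deterministic time $t$, the process $\overline{W}(\cdot)$ is a standard Brownian motion (under $\overline{\pr}$) on $[0, 1]$, and the identity $a\bigl(\overline{B}(t + (S-t)\theta) - \overline{B}(t)\bigr) = a\sqrt{S-t}\,\overline{W}(\theta)$ holds pathwise. Combined with the substitution above, for any $\tau = (S-t)\theta$ the integrand inside the supremum in \eqref{value_function_sup_american_option} is identical (pathwise) to the integrand in \eqref{representation_u_time_changed}.

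The final, and subtlest, step is to match the stopping time classes. One needs that the correspondence $\tau \leftrightarrow \theta = \tau/(S-t)$ is a measurable bijection between the admissible stopping times in \eqref{value_function_sup_american_option} and $\overline{\mathcal{T}}_1$. The key observation is that the filtration generated by $\overline{W}$ on $[0,1]$ coincides with that generated by the post-$t$ increments $\{\overline{B}(t+r) - \overline{B}(t) : 0 \le r \le (S-t)u\}$, namely $\sigma(\overline{W}(s), 0\le s\le u) = \sigma(\overline{B}(t+r)-\overline{B}(t), 0\le r\le (S-t)u)$. Since the payoff in \eqref{value_function_sup_american_option} depends on $\overline{B}(\cdot)$ only through these post-$t$ increments and on $y$, the supremum over stopping times of the full filtration $\overline{\mathbb{F}}$ shifted to start at $t$ may, by the argument recorded in Remark \ref{naturalfiltrationrelaxationremark}, be reduced to the supremum over stopping times of this smaller natural filtration, which is in bijection with $\overline{\mathcal{T}}_1$ under the scaling map.

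The main obstacle will be rigorously justifying this last reduction to the natural filtration and the measurability of the bijection; once that is in place, combining the deterministic substitution with the scaling identity yields \eqref{representation_u_time_changed} by taking the supremum on both sides. The remaining calculations are routine term-by-term matching.
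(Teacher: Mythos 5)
Your proof follows essentially the same route as the paper's: the deterministic time change $\tau = (S-t)\theta$, Brownian scaling to produce $\overline{W}$, and reduction of the supremum to stopping times of the natural filtration of the post-$t$ increments via the argument of Remark~\ref{naturalfiltrationrelaxationremark}. The paper phrases the scaling step as an equality in law and defers the remaining details to Lemma~3.9 of \cite{Lamberton}, whereas you construct $\overline{W}$ pathwise from $\overline{B}$, but the substance of the argument is the same.
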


\begin{proof}
    Lemma \ref{lem_time_changed_american_option} follows from the analogous Lemma 3.9 of \cite{Lamberton}. The main idea of the proof is to note that the processes 
    \begin{equation*}
        \Big(\overline{B}\big(t + s \cdot (S-t)\big) - \overline{B}\big(t\big)\Big)_{s \ge 0} 
        \quad \text{ and } \quad
        \left(\sqrt{S - t} \cdot \overline{W}(s) \right)_{s \ge 0},
    \end{equation*}
    where $\overline{B}(\cdot)$ is defined in \eqref{girsanov_BM_definition}, and $\overline{W}(\cdot)$ is a standard Brownian motion, have the same law. Therefore, after the time-change $\tau \mapsto \tau / (S - t)$ and the appropriate changes of variables inside all integrals, the representation \eqref{representation_u_time_changed} follows from the expectation in \eqref{value_function_sup_american_option} readily. We refer the reader to the proof of Lemma 3.9 in \cite{Lamberton} for more details. We note that the same logic used in Remark \ref{naturalfiltrationrelaxationremark} is required here as well.
\end{proof}

With the representation \eqref{representation_u_time_changed} it will be convenient to introduce the following notation. We consider an arbitrary filtered probability space $\left(\overline{\Omega}, \overline{\mathcal{F}}, \overline{\mathbb{F}}, \overline{\pr}\right)$ and adopt the notation from Lemma \ref{lem_time_changed_american_option}.
We define the functions
\begin{equation}\label{function_for_new_state_process}
    f^z_S(t, s, x) \coloneqq z
    + 
    \int_{0}^{s}(S-t) \, \beta\big(t+v(S-t)\big) \, dv
    +
    a \sqrt{S-t} \cdot x,
\end{equation}
and consider the processes $Z_{t, S}^z \coloneqq \left\{Z_{t, S}^z(s), 0 \le s \le 1\right\}$, which satisfy
\begin{equation}\label{new_state_process}
    Z_{t, S}^z(s) \coloneqq 
    f_S^z\left(t, s, \overline{W}(s)\right) = 
    z
    + 
    \int_{0}^{s}(S-t) \, \beta\big(t+v(S-t)\big) \, dv
    +
    a \sqrt{S-t} \cdot \overline{W}(s), \, \, 0 \le s \le 1.
\end{equation}
Note that the functions $f_S^z(\cdot, \cdot, \cdot)$ are differentiable in the $t$ argument, so the flow $t \mapsto Z_{t, S}^z(\cdot)$ is also differentiable. We denote the derivative of $Z_{t, S}^z(\cdot)$ with respect to $t$ by $\partial_t Z_{t, S}^{z}(\cdot)$, and, by differentiating \eqref{function_for_new_state_process} with respect to $t$, obtain
\begin{equation}\label{Z_flow_formula}
    \partial_t Z_{S, t}^{z}(s) 
    = (1-s) \beta(t+s(S-t)) - \beta(t) 
    - \frac{a}{2 \sqrt{S-t}} \cdot \overline{W}(s), \quad 0 \le s \le 1.
\end{equation}
Note that the processes $\partial_t Z_{t, S}^{z}(\cdot)$ satisfy
\begin{equation}\label{process_derivative_def}
    \lim_{h \rightarrow 0} \, \ex\left[\sup_{s \in [0,1]} \left|\frac{Z_{t + h, S}^{z}(s) - Z_{t, S}^z(s)}{h} - \partial_t Z_{t, S}^z(s) \right|^p\right] = 0, \quad \forall \, p \ge 1.
\end{equation}

For convenience of exposition, we define the functions
\begin{equation}\label{new_discount_function}
    \begin{split}
        r_{S}^t(s) &\coloneqq -(S-t) \, \beta_0(t + s(S-t)), \quad 0 \le s \le 1,
        \\
        R_S^t (s) &\coloneqq 
        \exp\left(- \int_0^s r_S^t(v) \, dv \right) 
        = \exp\left(\int_0^s (S-t) \,\beta_0\big(t + v(S-t) \big) \,  dv \right), \quad 0 \le s \le 1,
    \end{split}
\end{equation}
so that, from a straightforward computation, we obtain
\begin{equation}\label{new_discounting_derivative}
    \partial_t R_S^t(s) = R_S^t(s) \cdot \bigg((1-s) \beta_0(t+s(S-t)) - \beta_0(t) \bigg), \quad 0 \le s \le 1.
\end{equation}
The following important representation then holds.

\begin{Lem}\label{lem_time_derivative_representation}
    For any $S \in [0, T)$, the function $\partial_t u_S(\cdot, \cdot)$, in the form of \eqref{representation_u_time_changed}, has the following representation in the continuation region:
    \begin{equation}\label{time_derivative_representation}
        \begin{split}
            \partial_t u_S(t, y) = \ex &\left[
                R_S^t(\theta^*)
                \Bigg\{
                    \Big((1-\theta^*)\beta_0(t+\theta^*(S-t)) - \beta_0(t)\Big) 
                    \cdot
                    \psi\left(Z_{t, S}^y(\theta^*)\right)
                    \right.
                    \\& \left.  + \;
                    \psi'\left(Z_{t, S}^y\big(\theta^*)\right)
                    \cdot
                    \left(
                        (1-\theta^*)\beta(t+\theta^*(S-t)) - \beta(t)  - \frac{a}{2\sqrt{S-t}} \cdot \overline{W}(\theta^*)
                    \right)
                \Bigg\}
            \right].
        \end{split}
    \end{equation}
    Here, $\theta^*$ is an optimal stopping time for the problem \eqref{representation_u_time_changed} with the initial state $(t, y)$.
\end{Lem}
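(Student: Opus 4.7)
The strategy is the classical envelope-theorem calculation for the value function of an optimal stopping problem. I would set $F(t,\theta) \coloneqq R_S^t(\theta)\,\psi(Z_{t,S}^y(\theta))$, so that Lemma \ref{lem_time_changed_american_option} reads $u_S(t,y) = \sup_{\theta \in \overline{\mathcal{T}}_1} \ex[F(t,\theta)]$. A direct computation combining the product rule, the identity \eqref{new_discounting_derivative} for $\partial_t R_S^t$, the formula \eqref{Z_flow_formula} for $\partial_t Z_{t,S}^y$, and the chain rule applied to $\psi \circ Z_{t,S}^y$ shows that the right-hand side of \eqref{time_derivative_representation} equals exactly $\ex[\partial_t F(t,\theta^*)]$. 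The entire content of the lemma therefore reduces to the envelope identity $\partial_t u_S(t,y) = \ex[\partial_t F(t,\theta^*)]$.

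To obtain that identity, I would execute a standard one-sided sandwich. Since $\theta^*$ is optimal at $(t,y)$ and admissible at $(t+h,y)$ for all $h$ small enough that $(t+h,y)$ remains in the continuation region, we have $u_S(t,y) = \ex[F(t,\theta^*)]$ and $u_S(t+h,y) \geq \ex[F(t+h,\theta^*)]$, whence
\[
\frac{u_S(t+h,y) - u_S(t,y)}{h} \;\geq\; \ex\!\left[\frac{F(t+h,\theta^*) - F(t,\theta^*)}{h}\right] \qquad (h > 0),
\]
with the inequality reversed for $h<0$. Sending $h \to 0^+$ and $h \to 0^-$ in turn, and passing the limit through the expectation on the right by dominated convergence, yields $(\partial_t u_S)_+(t,y) \geq \ex[\partial_t F(t,\theta^*)] \geq (\partial_t u_S)_-(t,y)$. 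The two one-sided derivatives coincide because $u_S(\cdot,y)$ is of class $C^1$ in the $t$-variable on the open continuation region---a consequence of the $C^{1,2}$ regularity afforded by Theorem 3.6 of \cite{Lamberton} together with the variational-inequality machinery from \cite{Blanchet} already used in the proof of Proposition \ref{prop_C1_value_func}---and the identity \eqref{time_derivative_representation} follows.

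The principal technical obstacle is justifying the dominated convergence step in the presence of the non-smoothness of $\psi(y) = (e^y-1)^+$ at $y = 0$. To handle this, I would exploit that $\psi$ is globally Lipschitz with a.e.\ derivative $\psi'(y) = e^y \mathbf{1}_{\{y>0\}}$, together with the fact that $\pr(Z_{t,S}^y(\theta^*) = 0) = 0$: in the continuation region, either $\theta^* < 1$ and $Z_{t,S}^y(\theta^*)$ lies on the exercise boundary, which is strictly above the strike (i.e.\ strictly above $0$ in these coordinates, as remarked in the proof of Proposition \ref{prop_boundary_continuity}, since immediate exercise at zero payoff is never optimal), or $\theta^* = 1$ and $Z_{t,S}^y(1)$ is a non-degenerate Gaussian with variance $a^2(S-t)>0$. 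The chain rule for $\psi \circ Z_{t,S}^y$ is thus valid $\pr$-almost surely, and a common integrable dominator for the difference quotients $\big|\bigl(F(t+h,\theta^*) - F(t,\theta^*)\bigr)/h\big|$ is constructed from the boundedness of $\beta_0, \beta$ and their derivatives on $[0,S]$, the Lipschitz constant of $\psi$, and the $L^2$-integrability of $\overline{W}(\theta^*)/\sqrt{S-t}$, in the spirit of \eqref{process_derivative_def}.
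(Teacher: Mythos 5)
Your proposal takes essentially the same route as the paper: the one-sided difference-quotient sandwich built on the time-changed representation of Lemma~\ref{lem_time_changed_american_option} (whose purpose, exactly as you note, is to make $\theta^*$ admissible for all nearby initial times), the observation that $\pr\big(Z_{t,S}^y(\theta^*) = 0\big) = 0$ to avoid the kink of $\psi$, and a limit passage through the expectation. The only cosmetic differences are: the paper invokes the Lebesgue--Vitali convergence theorem (uniform integrability coming from the $L^2$-bound on $\overline{W}(\theta^*)$ and boundedness of $R_S^t$, $\partial_t R_S^t$) rather than constructing an explicit dominator, and your reference to Proposition~\ref{prop_boundary_continuity} for the ``boundary above strike'' fact is slightly out of place, since that proposition is only proved under the additional assumptions \ref{bdy_assm}--\ref{bdy_assm_3}; the general fact used here is simply that $u_S(t,\cdot)>0$ for $t<S$, so stopping at $Z=0$ before terminal time is never optimal, which you also state. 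Neither difference affects the validity of the argument.
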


\begin{proof}
    Fix $S \in [0, T)$ and an arbitrary point $(t, y)$ from the continuation region. Note that at $(t, y)$ the derivative $\partial_t u_S(t, y)$ exists, and let $\theta^* \coloneqq \theta^*(t,y)$ be an optimal stopping time for the problem \eqref{representation_u_time_changed} with this initial state. To get the equality \eqref{time_derivative_representation} it suffices to show the inequalities in both directions; we start with ``$\ge$''.

    Consider an arbitrary but small $h > 0$ and note that the stopping time $\theta^*$ is admissible for the problem started $(t+h, y)$ (this is the main reason why we make the time change in Lemma \ref{lem_time_changed_american_option}).
    Therefore, using \eqref{representation_u_time_changed} and the notation \eqref{new_state_process}--\eqref{new_discount_function}, we obtain 
    \begin{equation} \label{u_t_lower_bound_1}
        \begin{split}
            &\frac{u_S(t+h, y) - u_S(t,y)}{h} 
            \ge 
            \frac{1}{h} \Bigg( 
               \ex\Big[R_S^{t+h} (\theta^*) \cdot \psi\Big(Z_{t + h, S}^y(\theta^*) \Big) \Big] 
               - \ex\Big[R_S^t (\theta^*) \cdot \psi\Big(Z_{t, S}^y(\theta^*) \Big) \Big] 
            \Bigg)
            \\
            & \qquad= \ex\left[
                \frac{\psi\left(Z_{t + h, S}^y(\theta^*) \right) - \psi\left(Z_{t, S}^y(\theta^*) \right)}{h} 
                \, R_S^{t+h}(\theta^*) 
            \right] 
             + \ex\left[
                \frac{R_S^{t+h}(\theta^*) - R_S^{t}(\theta^*)}{h} 
                \, \psi\left(Z_{t, S}^y(\theta^*) \right)
            \right].
        \end{split}
    \end{equation}

    Note that the function $\psi(y) = (e^y - 1)^+$ is differentiable everywhere except at $y = 0$.
    Moreover, note that $\pr\big(Z_{t, S}^{y}(\theta^*) = 0\big) = 0$, i.e., we are almost surely away from the ``kink'' in the function $\psi(\cdot)$ at the optimal stopping time $\theta^*$. 
    This follows from the facts that $\pr\big(Z_{t, S}^y(S) = 0\big)$, because $Z_{t, S}^{y}(S)$ has a density, and $u_S(t, \cdot) > 0$ for any $t < S$, so it is never optimal to stop at $y = 0$ before the terminal time $S$.
    Therefore, letting $h \downarrow 0$ on both sides of \eqref{u_t_lower_bound_1}, we obtain
    \begin{equation} \label{u_t_lower_bound_2}
        \partial_t u_S(t,y) 
        \ge 
        \ex \left[
            \psi'\left(Z_{t, S}^y(\theta^*)\right) 
            \cdot \partial_t Z_S^{t,y}(\theta^*)
            \cdot R_S^t(\theta^*)
        \right]
        +
        \ex \left[
            \partial_t R_S^t(\theta^*) 
            \cdot \psi\left(Z_{t, S}^y(\theta^*)\right)
        \right],
    \end{equation}
    where the expressions under the expectations coincide with the one in \eqref{time_derivative_representation} due to \eqref{Z_flow_formula} and \eqref{new_discounting_derivative}.
    Passing to the limit in \eqref{u_t_lower_bound_1} is justified by the Lebesgue-Vitali convergence theorem, which is applicable due to the boundedness of the functions $R_S^t(\cdot)$ and $\partial_t R_S^t(\cdot)$ and the boundedness in $L^2$ of all other terms due to the form of $\psi(\cdot)$ and \eqref{new_state_process}--\eqref{process_derivative_def}.

    The reverse inequality in \eqref{u_t_lower_bound_2} can be obtained similarly by considering the ratio $(u_S(t, y) - u_S(t-h,y))/h$
    and arguing by analogy with \eqref{u_t_lower_bound_1}; we omit the details. The equality \eqref{time_derivative_representation} follows readily.
\end{proof}

\begin{Lem}\label{lem_u_t_final_bound}
    Fix any $T \in [0,\infty]$ and $0 \leq t < T$.
    For any $S \in (t, T)$, the function $\partial_t u_S(\cdot, \cdot)$ has the following bound in the continuation region:
    \begin{equation}\label{u_t_final_bound}
        \left|\partial_t u_S(t, y)\right| \leq
        \frac{D \; e^y}{c_1(t)}
                \left(
                    1 + 2a^2 + |\beta_1(t)| + \frac{a}{\sqrt{2\pi(S-t)}} e^{-a^2(S-t)/2}
                \right)
                +
                \frac{D}{c_0(t)} \Big(1 + |\beta_0(t)| \Big),
    \end{equation}
    where
    \begin{equation} \label{D_definition}
        D \coloneqq D(t,T) \coloneqq\sup_{\substack{s\in [t,T), \\ i \in \{0,1\}}} \Big(|c_i'(s)| \vee |c_i(s)|\Big) < \infty.
    \end{equation}
    The last inequality in \eqref{D_definition} follows from the assumptions \ref{reg_assm_3}, \ref{reg_assm_4}.
\end{Lem}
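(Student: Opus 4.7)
The plan is to bound the three terms in the representation \eqref{time_derivative_representation} separately, after applying the triangle inequality inside the expectation. Two preliminary identities, immediate from \eqref{new_discount_function} and \eqref{new_state_process}, underpin every estimate: first,
\[
R_S^t(s) \;=\; \frac{c_0(t+s(S-t))}{c_0(t)} \;\in\; (0,1],
\qquad
R_S^t(s)\,\beta_0\big(t+s(S-t)\big) \;=\; \frac{c_0'(t+s(S-t))}{c_0(t)},
\]
so that the latter has absolute value at most $D/c_0(t)$; and second,
\[
R_S^t(s)\,e^{Z_{t,S}^y(s)} \;=\; \frac{e^y\,c_1(t+s(S-t))}{c_1(t)}\,M(s),
\qquad
M(s) \;\coloneqq\; \exp\!\left(a\sqrt{S-t}\,\overline{W}(s) - \tfrac{a^2(S-t)\,s}{2}\right),
\]
where $M$ is a martingale. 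By optional sampling at the bounded stopping time $\theta^* \le 1$ we may pass to the measure $d\widetilde{\pr}/d\pr \coloneqq M(1)$, under which $\widetilde{W}(s) \coloneqq \overline{W}(s) - a\sqrt{S-t}\,s$ is a standard Brownian motion; in particular $\ex[R_S^t(\theta^*)\,e^{Z_{t,S}^y(\theta^*)}] \le D\,e^y/c_1(t)$.

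Combining these identities with the pointwise bounds $\psi(z), \psi'(z) \le e^z$, I would treat the three summands in turn. For the summand involving $\psi(Z)$ and the $\beta_0$-difference, I rewrite
\[
R_S^t(\theta^*)\big[(1-\theta^*)\beta_0(\cdot) - \beta_0(t)\big] \;=\; (1-\theta^*)\,\frac{c_0'(\cdot)}{c_0(t)} \;-\; R_S^t(\theta^*)\,\beta_0(t),
\]
whose absolute value is at most $D/c_0(t) + |\beta_0(t)|$, and pair with $\ex[R_S^t(\theta^*)\psi(Z_{t,S}^y(\theta^*))] \le u_S(t,y) \le e^y$ to produce the $D(1+|\beta_0(t)|)/c_0(t)$ block. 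For the summand involving $\psi'(Z)$ and the $\beta$-difference, I split $\beta = \beta_1 - \beta_0 - a^2/2$: the $\beta_1$ and $a^2$ contributions pair with $\ex[R_S^t\psi'(Z)] \le D\,e^y/c_1(t)$, while the $\beta_0$ contribution is absorbed into $D/c_0(t)$ via the first identity. For the last summand $(a/2\sqrt{S-t})\,\ex[R_S^t\psi'(Z)\,|\overline{W}(\theta^*)|]$, the second identity converts it into
\[
\frac{a\,e^y}{2\sqrt{S-t}\,c_1(t)}\;\widetilde{\ex}\!\left[c_1(t+\theta^*(S-t))\,\big|\overline{W}(\theta^*)\big|\right],
\]
and a Gaussian density estimate for $\overline{W}(\theta^*)$ under $\widetilde{\pr}$ produces the factor $\tfrac{a}{\sqrt{2\pi(S-t)}}\,e^{-a^2(S-t)/2}$.

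The main obstacle is this last step: obtaining the sharp constant $\tfrac{a}{\sqrt{2\pi(S-t)}}\,e^{-a^2(S-t)/2}$, rather than a crude $O(1/\sqrt{S-t})$ bound, requires a careful density argument under $\widetilde{\pr}$ that exploits the Gaussian law of $\overline{W}(\theta^*)$ and the fact that $\theta^*$ is a stopping time of the natural filtration of $\overline{W}$. A secondary but important bookkeeping point is that the $|\beta_0|$ contributions must be routed through the discount factor $R_S^t$ (via the first identity) rather than through $\psi(Z)$, ensuring they land in the $D(1+|\beta_0(t)|)/c_0(t)$ group rather than being multiplied by the spurious $e^y$ that a naive bound would produce.
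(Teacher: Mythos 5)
Your plan assembles the right ingredients (the identities $R_S^t = c_0(\cdot)/c_0(t)$, $R_S^t e^{Z} = e^y(c_1(\cdot)/c_1(t))M$, the observation $\ex[M(\theta^*)]=1$), and the handling of the $\beta_1$ and $a^2$ contributions is sound. However, there are two genuine gaps.

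First, the opening plan to ``bound the three summands \ldots separately, after applying the triangle inequality inside the expectation'' cannot produce the stated $e^y$-free $c_0$-block. The first summand is $\ex\!\big[R_S^t(\theta^*)\big((1-\theta^*)\beta_0 - \beta_0(t)\big)\psi(Z)\big]$, in which $\psi(Z)=(e^Z-1)\mathbf{1}_{\{Z>0\}}$ is unbounded. Your rewrite consumes the full factor $R_S^t$ to tame the $\beta_0$-coefficient (giving $|{\cdot}|\le D/c_0(t)+|\beta_0(t)|$), but then nothing is left to control $\psi(Z)$; the best remaining estimate $\ex[\psi(Z)]\le e^y$ reintroduces exactly the spurious $e^y$ you say you want to avoid. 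You cannot ``pair'' the coefficient bound with $\ex[R_S^t\psi(Z)]\le e^y$ --- that would use the single factor $R_S^t$ twice. The paper avoids this not by routing, but by \emph{cancellation}: writing $R_S^t\psi(Z)=(V-U_0)\mathbf{1}$ with $V:=R_S^te^Z$, the $V\cdot\beta_0$ piece of the first summand cancels the $V\cdot(-\beta_0)$ piece hidden inside $\beta=\beta_1-\beta_0-a^2/2$ in the $\psi'(Z)$ summand, leaving only the bounded $-U_0\big((1-\theta^*)\beta_0(\alpha^*)-\beta_0(t)\big)$ contribution for the $D(1+|\beta_0(t)|)/c_0(t)$ block. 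A term-by-term triangle inequality across the two summands destroys this cancellation; the summands must be algebraically recombined \emph{before} any absolute values are introduced. The same objection applies to your plan to absorb ``the $\beta_0$ contribution'' of the second summand into $D/c_0(t)$ via the first identity: that piece carries the factor $\psi'(Z)=e^Z\mathbf{1}$, which is $\asymp e^y$, not order $1$.

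Second, your sketch for the $\overline{W}(\theta^*)$ term relies on ``the Gaussian law of $\overline{W}(\theta^*)$,'' which is false: for a non-deterministic stopping time $\theta^*$, $\overline{W}(\theta^*)$ is not Gaussian under $\pr$ or $\widetilde{\pr}$, so no direct density estimate is available. The paper's argument instead decomposes $M\overline{W}=N+A$ with $N$ a martingale and $A$ increasing, uses that $|N|$ is a submartingale with $\theta^*\le 1$ to get $\ex[|N(\theta^*)|]\le\ex[|N(1)|]$, combines with $A(\theta^*)\le A(1)$ to obtain $\ex[|M\overline{W}(\theta^*)|]\le \ex[|M(1)\overline{W}(1)|]+2a\sqrt{S-t}$, and only \emph{then} invokes the Gaussianity of $\overline{W}(1)$. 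You have correctly flagged this as the main obstacle, but the proposed route does not resolve it.
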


\begin{proof}
    Fix $0 \leq t < S < T$. To obtain \eqref{u_t_final_bound}, we will manipulate the representation \eqref{time_derivative_representation} using the following observations. First, we recall that $\psi(y) = (e^y - 1) \cdot \mathbf{1}(y > 0)$, $\psi'(y) = e^y \cdot \mathbf{1}(y > 0)$ for all $y \in \mathbb{R}\setminus\{0\}$, and note that we can write 
    \begin{equation*}
        U_i(s) \coloneqq e^{\int_0^{s} (S-t) \beta_i(t+ (S-t)v) \, dv} = \frac{c_i(t + s(S-t))}{c_i(t)}, \quad 0 \le s \le 1, \ i = 0,1.
    \end{equation*} 
    Therefore, using the definitions \eqref{new_state_process} and \eqref{new_discount_function}, we obtain the representation
    \begin{equation*}
        V(\theta^*) \coloneqq R_S^t(\theta^*) \exp\left(Z_{t,S}^y(\theta^*)\right) = U_0(\theta^*) \exp\left(Z_{t,S}^y(\theta^*)\right) = e^y \cdot U_1(\theta^*) \cdot M(\theta^*), 
    \end{equation*} 
    where $M(s) \coloneqq \mathcal{E}\left(a\sqrt{S-t} \cdot \overline{W}(s)\right), \, 0 \le s \le 1,$
    and $\mathcal{E}(\cdot)$ denotes the stochastic exponential. With $\alpha^* \coloneqq t + \theta^*(S-t)$, collecting terms in \eqref{time_derivative_representation} shows that
    \begin{equation*}
        \begin{split}
        \partial_t u_S(t,y)
        &=\ex
            \Bigg[
                \left\{
                    V(\theta^*)
                        \left(
                            (1-\theta^*) \beta_1(\alpha^*)  - \beta_1(t) + \frac{\theta^* a^2}{2} - \frac{a}{2 \sqrt{S-t}} \overline W(\theta^*)
                        \right)\right. \\
                    &\qquad \left. \phantom{\frac{\theta^* a^2}{2}} 
                    - U_{0}(\theta^*)
                        \Big(
                            (1-\theta^*) \beta_0(\alpha^*) - \beta_0(t)
                        \Big)
                \right\}
                \cdot \mathbf{1}_{\left\{Z_{t,S}^y(\theta^*) > 0\right\}}
            \Bigg]
        \\
        &=\ex
            \Bigg[
                \left\{
                    \frac{e^y \cdot M(\theta^*)}{c_1(t)} 
                    \Bigg(
                        (1-\theta^*)c_1'(\alpha^*) \right.+ \left.c_1(\alpha^*)
                        \left(
                            \frac{\theta^* a^2}{2} - \frac{a}{2 \sqrt{S-t}} \overline W(\theta^*) - \beta_1(t)\right)
                    \right)
                    \\
                    &\qquad \left. \phantom{\frac{\theta^* a^2}{2}} 
                    -\frac{1}{c_0(t)} \Big((1-\theta^*) c_0'(\alpha^*) - c_0(\alpha^*)\beta_0(t) \Big)
                \right\}
                \cdot \mathbf{1}_{\left\{Z_{t,S}^y(\theta^*) > 0\right\}}
            \Bigg].
        \end{split}
    \end{equation*}
    Note that the process $M(\cdot)$ is a (square-integrable) martingale, so the optional sampling theorem and the boundedness of $\theta^*$ imply $\ex\left[ M(\theta^*) \right] = 1$.
    Thus, with $D$ given by \eqref{D_definition}, we obtain
    \begin{equation} \label{u_t_prefinal_bound}
        \begin{split}
            \big|\partial_t u_S(t,y)\big|
            &\leq \frac{D \; e^y}{c_1(t)}
                \left(
                    1 + \frac{a^2}{2} + |\beta_1(t)| + \frac{a}{2 \sqrt{S-t}} \; \ex\left[\Big|M(\theta^*) \cdot \overline W(\theta^*) \Big| \right]
                \right)
                +
                \frac{D}{c_0(t)} \Big(1 + |\beta_0(t)| \Big).
        \end{split}
    \end{equation}
    
    To bound this expression, it only remains to find an appropriate bound for $\ex \left[ \big| M(\theta^*) \cdot 
        \overline{W}(\theta^*) \big|
        \right].$
    At this point, we note that the process $M(\cdot)$ in non-negative, so the process $M(\cdot) \, \overline{W}(\cdot)$ is a submartingale, which follows immediately from its dynamics
    \begin{equation} \label{MWdecomp}
        M(s) \, \overline{W}(s) = 
        \int_0^s M(r) \, d\overline{W}(r)
        + \int_0^s \overline{W}(r) \, dM(r) 
        + \int_0^s a \sqrt{S-t} \, M(r) \, dr, \quad 0 \le s \le 1.
    \end{equation}
    Let $N(s)$ denote the sum of the first two terms in \eqref{MWdecomp}, and $A(s)$ denote the last term. Noting that $N(\cdot)$ is a martingale and $A(\cdot)$ is an increasing process, we have
    \begin{align}
            \ex \left[ \Big| M(\theta^*) \cdot 
            \overline{W}(\theta^*) \Big|
            \right]
            &\leq \ex \left[\Big|N(\theta^*)\Big| + A(1) \right] \leq \ex \left[\Big|N(1)\Big| + A(1) \right] 
            \\&\leq  \ex \left[\Big|N(1) + A(1)\Big| + 2A(1) \right]
            =
            \ex \left[ \Big| M(1) \cdot 
            \overline{W}(1) \Big|
            \right] + 2 a \sqrt{S-t}
            \\
            &= 
            \ex \left[
            e^{-a^2(S-t)/2 + a\sqrt{S-t} \cdot \overline{W}(1)} \cdot \Big| \overline{W}(1)  \Big|
            	\right] + 2a \sqrt{S-t}
            \\&=
            e^{-a^2(S-t)/2}
            \left(
                \sqrt{\frac{2}{\pi}} + a\sqrt{S-t} \cdot \erf \left(\frac{a\sqrt{S-t}}{\sqrt{2}} \right) 
                e^{a^2(S-t)/2}
            \right)
            + 2a\sqrt{S-t}
            \\&\le
            e^{-a^2(S-t)/2}\sqrt{\frac{2}{\pi}} + 3a\sqrt{S-t}, \label{submartingale_bound}
    \end{align}
    where $\erf(z) \coloneqq 2/\sqrt{\pi}\cdot \int_{0}^z e^{-t^2} \, dt$.
    The first three inequalities in \eqref{submartingale_bound} follow from the triangle inequality and the fact that $|N(\cdot)|$ is a submartingale, the subsequent two equalities follow from our definitions and Fubini's theorem applied to the $2A(1)$ term, and the third equality follows from the identity 
    \begin{equation*}
	    \ex\left[ \exp(c \, \mathcal{N})  \cdot \big|\mathcal{N}\big| \right]
        =
        \sqrt{\frac{2}{\pi}} + c \cdot \erf \left(\frac{c}{\sqrt{2}} \right) \exp\left( \frac{c^2}{2} \right), \quad c \ge 0,
    \end{equation*}
    for a standard normal distribution, which can be obtained by a straightforward integration.

    Combining \eqref{u_t_prefinal_bound} and \eqref{submartingale_bound}, we obtain \eqref{u_t_final_bound}.
\end{proof}

\begin{Lem}\label{lem_u_y_final_bound}
    For any $S \in [0, T)$, the function $\partial_y u_S(\cdot, \cdot)$ has the following bound:
    \begin{equation}\label{u_y_final_bound}
        \big|\partial_y u_S(t, y)\big|
        \le
        e^y \left(1 + \frac{c_1(t) + c_0(t)}{c_1(t) + c_0(t)e^y} \right).
    \end{equation}
\end{Lem}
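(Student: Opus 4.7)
The plan is to exploit the identity $V_S(t,\pi) = (1-\pi)\,c_0(t)\,u_S(t,y(t,\pi))$ from \eqref{american_call_value_fn} to convert the desired estimate on $\partial_y u_S(\cdot,\cdot)$ into control of two quantities that are already in hand, namely $\partial_\pi V_S(\cdot,\cdot)$ and $u_S(\cdot,\cdot)$ itself. Both $V_S(t,\cdot)$ and $u_S(t,\cdot)$ are of class $C^1$ (the former by Proposition \ref{prop_C1_value_func}, the latter through the results of \cite{Lamberton} used to derive it), and since $\partial_\pi y(t,\pi) = 1/(\pi(1-\pi))$, differentiating \eqref{american_call_value_fn} in $\pi$ and rearranging yields
\begin{equation*}
\partial_y u_S\big(t,y(t,\pi)\big) \;=\; \frac{\pi}{c_0(t)}\,\partial_\pi V_S(t,\pi) \;+\; \pi\,u_S\big(t,y(t,\pi)\big).
\end{equation*}
Because $u_S(t,\cdot)$ is non-decreasing (immediate from $Y^{t,y}(\cdot) = y + Y^{t,0}(\cdot)$ and the monotonicity of $\psi(\cdot)$), we have $\partial_y u_S \ge 0$, so it suffices to supply an upper bound.

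Next I would bound each term on the right-hand side. The first is immediate from the Lipschitz estimate in Proposition \ref{prop_properties_of_value_func}\ref{value_func_Lipschitz_pi}, giving $0 \le \partial_\pi V_S(t,\pi) \le c_1(t) + c_0(t)$. For the second, I would establish the intrinsic upper bound $u_S(t,y) \le e^y$: for any admissible stopping time $\tau$, the martingale property of $\Pi^\pi(\cdot)$ (with uniform integrability, since $\Pi^\pi(\cdot) \in [0,1]$) together with the monotonicity of $c_1(\cdot)$ from \ref{reg_assm_3} gives
\begin{equation*}
\ex\!\left[\big(c_1(t+\tau)\,\Pi^\pi(\tau) - c_0(t+\tau)(1-\Pi^\pi(\tau))\big)^+\right] \;\le\; c_1(t+\tau)\,\ex[\Pi^\pi(\tau)] \;\le\; c_1(t)\,\pi.
\end{equation*}
Taking the supremum over $\tau$ and dividing by $(1-\pi)\,c_0(t)$, then using the definition of $y(t,\pi)$, yields $u_S(t,y(t,\pi)) \le e^{y(t,\pi)}$.

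Inverting $y = y(t,\pi)$ gives $\pi = e^y c_0(t)/(c_1(t) + c_0(t)\,e^y)$, and substituting produces the preliminary bound
\begin{equation*}
\partial_y u_S(t,y) \;\le\; \frac{e^y\bigl(c_1(t) + c_0(t)\bigr)}{c_1(t) + c_0(t)\,e^y} \;+\; \frac{e^{2y}\,c_0(t)}{c_1(t) + c_0(t)\,e^y}.
\end{equation*}
The claimed estimate then follows from the elementary inequality $e^{2y}\,c_0(t) \le e^y\bigl(c_1(t) + c_0(t)\,e^y\bigr)$, which reduces to $0 \le c_1(t)$ after cancellation. I do not foresee any substantive obstacle: the argument is essentially a chain-rule computation stitched together with two a priori bounds already proved elsewhere in the paper, and the only care needed is to invoke the $C^1$ regularity of $V_S(t,\cdot)$ and $u_S(t,\cdot)$ to justify differentiating \eqref{american_call_value_fn}.
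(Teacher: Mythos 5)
Your proof is correct and is essentially the paper's argument: both apply the chain rule to the identity $V_S(t,\pi)=(1-\pi)\,c_0(t)\,u_S\big(t,y(t,\pi)\big)$ and both invoke the Lipschitz bound $\partial_\pi V_S\le c_1+c_0$ from Proposition~\ref{prop_properties_of_value_func}\ref{value_func_Lipschitz_pi}, and the two resulting decompositions of $\partial_y u_S$ are algebraically identical. The only organizational difference is that the paper differentiates the \emph{inverted} relation $u_S(t,y)=\tfrac{c_1(t)+c_0(t)e^y}{c_0(t)c_1(t)}V_S(t,\pi(t,y))$ in $y$ and pairs it with the weaker a~priori bound $V_S(t,\pi)\le c_1(t)$ so that \eqref{u_y_final_bound} drops out immediately, whereas you differentiate the forward relation in $\pi$ and pair it with the slightly sharper (but equivalent in spirit) bound $u_S(t,y)\le e^y$, i.e.\ $V_S(t,\pi)\le c_1(t)\pi$, which costs you one extra elementary inequality at the end.
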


\begin{proof}
    Fix arbitrary $S \in [0, T)$, $t \in [0, S)$ and recall \eqref{american_call_value_fn}, which can be written as
    \begin{equation}\label{u_representation_via_V}
        u_S(t, y) = \frac{c_1(t) + c_0(t)e^y}{c_0(t) \, c_1(t)} \, 
        V_S\left(t, \frac{c_0(t)e^y}{c_1(t) + c_0(t) e^y}\right)
    \end{equation}
    by inverting the change of variables \eqref{change_of_initial_position}. 
    Note also the bounds  
    \begin{equation}\label{bounds_for_V_V_pi}
        V_S(t, \pi) \le c_1(t) 
        \quad \text{ and } \quad
        \partial_\pi V_S(t, \pi) \le c_1(t) + c_0(t), 
        \quad \forall \, \pi \in (0, 1).
    \end{equation}
    Here, the first bound follows from $G(t, \pi) \le c_1(t)  \, \pi$ in \eqref{gain_function} and the fact that $\Pi^\pi(\cdot), \pi \in (0, 1)$, is a martingale.
    Thus, the optional sampling theorem and the monotonicity of the function $c_1(\cdot)$ imply $\ex \left[G(t + \tau, \Pi^\pi(\tau)) \right] \le \ex \left[c_1(t + \tau) \, \Pi^\pi(\tau) \right] \le c_1(t) \ex \left[\Pi^\pi(\tau) \right] = c_1(t)$ for any $\pi \in (0, 1), \, \tau \in \mathcal{S}_{S-t}$. The second bound in \eqref{bounds_for_V_V_pi} follows from the Lipschitz property of the value function proved in part \ref{value_func_Lipschitz_pi} of Proposition \ref{prop_properties_of_value_func}.
    The bound \eqref{u_y_final_bound} then follows by differentiating both sides of \eqref{u_representation_via_V} with respect to the variable $y$ and using the bounds \eqref{bounds_for_V_V_pi}.
\end{proof}

\noindent \textit{\textbf{Proof of Proposition \ref{prop_Lipschitz_property_spatial_derivative}:}}
We divide the proof into several steps. 

\underline{Step 1:} 
We fix any $T \in [0,\infty]$, $0 \leq t < t' < T$, and note that it suffices to consider compact sets $I_n \coloneqq[1/n, 1-1/n] \subset (0, 1), n \ge 2$. Fix any such $I_n$ and recall that the functions $\{V_S(t, \cdot)\}_{t' \le S < T}$ are linear in the stopping regions $\{\mathcal{S}_S\}_{t' \le S < T}$ with derivatives equal to $c_1(t) + c_0(t)$. Moreover, recall that the functions $\{V_S(t, \cdot)\}_{t' \le S < T}$ are twice continuously differentiable in the continuation regions $\{\mathcal{C}_S\}_{t' \le S < T}$.
Therefore, in order to obtain \eqref{lipschitzianity_spatial_derivative}, it suffices to show that the functions $\{\partial_{\pi \pi} V_S(t, \cdot)\}_{t' \le S < T}$ are uniformly bounded on $I_n \cap \mathcal{C}_{S}$. Moreover, the PDE \eqref{value_function_boundary_problem} for the value functions allows us to establish the uniform bounds for the functions $\{\partial_{t} V_S(t, \cdot)\}_{t' \le S < T}$ instead.

\underline{Step 2:} 
Recalling the relationship \eqref{american_call_value_fn} between $V_S(\cdot, \cdot)$ and $u_S(\cdot, \cdot)$, we obtain
\begin{equation*}
        \partial_t V_S(t, \pi) 
        = (1-\pi) \, c_0'(t) \, u_S\big(t, y(t,\pi)\big) 
         + (1-\pi) \, c_0(t) \Big( 
            \partial_t u_S\big(t, y(t,\pi)\big) 
            + \partial_y u_S\big(t, y(t,\pi)\big) \,  \partial_t y(t,\pi)
            \Big).
\end{equation*}
Since we have fixed $t, t', T$, and the compact set $I_n$, to obtain uniform bounds for $\{\partial_t V_S(t, \cdot)\}_{t' \le S < T}$ on $\{I_n \cap \mathcal{C}_S\}_{t' \le S < T}$, it suffices to obtain uniform bounds for the functions $u_S\big(t, \cdot), \partial_y u_S\big(t, \cdot)$, and $\partial_t u_S\big(t, \cdot)$
on compact sets $\mathbb{R} \supset \widetilde{I}_n \coloneqq y(t', I_n)$, uniformly in $S \ge t'$, where the map $\pi \mapsto y(t', \pi)$ is defined in \eqref{change_of_initial_position}.

\underline{Step 3:} The bounds for $\partial_t u_S\big(t, \cdot)$ and $\partial_y u_S\big(t, \cdot)$ follow readily from Lemmas \ref{lem_u_t_final_bound} and \ref{lem_u_y_final_bound}, respectively. As for $u_S\big(t, \cdot)$, we clearly have $u_S\big(t, y) \le e^y$ for any $y \in \mathbb{R}$, due to the fact that
\begin{equation*}
    \ex\left[ R_S^t(\theta^*) \left(e^{Z_{t,S}^y(\theta^*)}-1\right)^+ \right] \leq \ex \left[\left(e^{Z_{t,S}^y(\theta^*)}-1\right)^+ \right] \leq \ex\left[e^{Z_{t,S}^y(\theta^*)} \right] \leq e^y,
\end{equation*}
where the first inequality follows from the non-positivity of $\beta_0(\cdot)$, the second follows trivially since $(e^y-1)^+ \leq e^y$ for every $y \in \mathbb{R}$, the third follows from the fact that $\beta_1(\cdot) - \beta_0(\cdot)$ is negative, so $e^{Z_{t,S}^y(\cdot)}$ is a supermartingale.

As a result, combining all of the above arguments and bounds and using the fundamental theorem of calculus, which is applicable since the functions $V_S(t, \cdot), \, t \in [0, S)$ are piece-wise $C^2$, we obtain the statement \eqref{lipschitzianity_spatial_derivative} of Proposition \ref{prop_Lipschitz_property_spatial_derivative} with the function
\begin{equation*}
    C(S, t, K) \coloneqq \frac{D(t,T)}{\underline{k}^2 (1-\overline{k})^2} \left(5 |\beta_0(t)| + 4 |\beta_1(t)| + 2 + 2a^2 + \frac{a}{\sqrt{2\pi(S-t)}}\; e^{-a^2(S-t)/2}\right),
\end{equation*}
where $\underline{k} = \inf K$, $\overline{k} = \sup K$, and the constant $D(t,T)$ is defined in \eqref{D_definition}. It only remains to note that the above function is finite and decreasing in $S$. 
    \hfill \qed {\parfillskip0pt\par}

\subsection{Proof of Proposition \ref{ito_for_value_function}}\label{subsec_app_ito}

    The identity \eqref{ito_equation_for_value_function} is trivial for $t = T$ and in the cases $\pi = 0$, $\pi = 1$, where the process $\Pi^\pi(s) \equiv \pi$ for every $s \in [0, \infty]$. Thus, we will prove \eqref{ito_equation_for_value_function} fixing $(t, \pi) \in [0,T) \times (0,1)$. In a similar fashion to the proof of Proposition \ref{prop_C1_value_func}, we will first prove the result when $T < \infty$, $c_i(\cdot) \in C^2([0, T]), \, i = 0,1$, and $c_i(T) > 0, \, i = 0, 1$, and then handle the remaining cases via limit-based arguments. In the former case, since $V_T(\cdot, \cdot)$ is uniformly continuous on $[0, T] \times [0,1]$ by Proposition \ref{prop_properties_of_value_func}, it may be extended to a bounded continuous function on $\mathbb{R}^2$, and abusing notation, we refer to this extension by $V_T(\cdot, \cdot)$ as well. The remainder of the proof adapts the standard mollification argument in Theorem 2.7.9 of \cite{KarShr98}. For convenience, we divide the proof into several steps.

    \underline{Step 1:}
    We start by introducing the necessary notation. We let the mollifier $\phi: \mathbb{R}^2 \rightarrow [0, \infty)$ be a $C^\infty(\mathbb{R}^2)$ function such that
    \begin{equation*}
        \begin{split}
            \int_{\mathbb{R}^2} \phi(x,y) \, dx \, dy = 1  \quad \text{ and } \quad 
            \phi(x,y) = 0 \, \text{ for } \, (x,y) \not \in [0,1]^2.
        \end{split}
    \end{equation*}
    For any $\eps > 0$ and $(t, \pi) \in [0, T) \times (0,1)$, we define the mollified function $V_T^{(\eps)}(\cdot, \cdot)$ by
    \begin{equation} \label{definition_mollified_value_function}
        V_T^{(\eps)}(t, \pi) 
        \coloneqq \int_{\mathbb{R}^2} V_T(t + \eps u, \pi + \eps v)\phi(u,v) \, du \, dv.
    \end{equation}
    By the change of variables formula, we obtain 
    \begin{align}\label{integral_change_of_variables}
         \int_{\mathbb{R}^2} V_T(t + \eps u, \pi + \eps v)\phi(u,v) \, du \, dv = \frac{1}{\eps^2} \int_{\mathbb{R}^2} V_T(s, y) \phi\left(\frac{s-t}{\eps}, \frac{y-\pi}{\eps}\right) ds \, dy.
    \end{align}
    This form allows to see that $V_T^{(\eps)}(\cdot, \cdot)$ is $C^\infty\left(\mathbb{R}^2\right)$. 
    Indeed, since $V_T(\cdot, \cdot)$ is bounded and the derivatives of $\phi(\cdot, \cdot)$ are continuous to any order and have compact support, the bounded convergence theorem implies that one may differentiate under the integral in the expression on the right-hand side of \eqref{integral_change_of_variables}.
    Finally, we fix $\delta \in \left(0, T\wedge 1\right)$, let $S_\delta \coloneqq [\delta, T-\delta] \times [\delta, 1-\delta]$, and assume that $\eps \in \left(0, \delta/2\right)$. Note that $(t+\eps u, \pi + \eps v) \in S_{\delta/2}$ for any $(u,v) \in [0,1]^2$ and $(t, \pi) \in S_\delta$. 
    
    \underline{Step 2:}
    We then get convenient representations of the derivatives of the function $V_T^{(\eps)}(\cdot, \cdot)$, namely of its temporal derivative and the first two spatial derivatives.

    First, using the fact that $\partial_\pi V_T(\cdot, \cdot)$ is continuous by Proposition \ref{prop_C1_value_func} and bounded on $S_{\delta/2}$ by part \ref{value_func_Lipschitz_pi} of Proposition \ref{prop_properties_of_value_func}, we apply the bounded convergence theorem to obtain
    \begin{equation}\label{partial_pi_representation}
        \partial_\pi V_T^{(\eps)}(t, \pi) = \int_{\mathbb{R}^2} \partial_\pi V_T(t+\eps u,\pi + \eps v) \phi(u,v) \, du \, dv, \quad \forall \, (t, \pi) \in S_{\delta}.
    \end{equation}
    
    Secondly, by arguing similarly to the proof of Proposition \ref{prop_C1_value_func} and using Theorem 3.6 of \cite{Lamberton}, we obtain that the weak derivatives $\partial_t V_T(\cdot, \cdot)$ and $\partial_{\pi \pi}V_T(\cdot, \cdot)$ exist and are bounded on $S_{\delta/2}$. Moreover, from the free-boundary problem \eqref{value_function_boundary_problem} (whose validity is proved in Proposition \ref{prop_boundary_problem_value_func}), it is evident that the strong derivatives $\partial_t V_T(\cdot, \cdot)$ and $\partial_{\pi \pi}V_T(\cdot, \cdot)$ exist and are continuous on $S_{\delta/2} \setminus \partial C$. In particular, this implies that on $S_{\delta/2} \setminus \partial C$ the corresponding weak and strong derivatives coincide.
    
    As a result, using the identity \eqref{integral_change_of_variables} and the definition of a weak derivative, we obtain, for every $(t, \pi) \in S_\delta$, 
    \begin{equation*} \label{mollified_t_deriv}
            \partial_t V^{(\eps)}_T(t, \pi) = -\frac{1}{\eps^3} \int_{\mathbb{R}^2} V_T(s, y) \phi_1\left(\frac{s-t}{\eps}, \frac{y-\pi}{\eps}\right) \, ds \,  dy
            = \int_{\mathbb{R}^2} \partial_t V_T(t + \eps u, \pi + \eps v) \phi(u,v) \, du \, dv.
    \end{equation*}
    Here, $\phi_1(\cdot, \cdot)$ denotes the partial derivative of $\phi(\cdot, \cdot)$ with respect to its first argument, and the first equality follows by differentiating under the integral, using the bounded convergence theorem.

    Similarly, for every $(t, \pi) \in S_\delta$, we get the following representation of $\partial_{\pi \pi}V^{(\eps)}_T(\cdot, \cdot)$:
    \begin{equation*}
    \begin{split}
        \partial_{\pi \pi}V^{(\eps)}_T(t, \pi) &= \frac{1}{\eps^4} \int_{\mathbb{R}^2} V_T(s, y) \phi_{22}\left(\frac{s-t}{\eps}, \frac{y-\pi}{\eps}\right) \, ds \, dy
        = 
        -\frac{1}{\eps^3} \int_{\mathbb{R}^2} \partial_\pi V_T(s,y) \phi_2\left(\frac{s-t}{\eps}, \frac{y-\pi}{\eps}\right) \, ds \,
        \\
        &= \frac{1}{\eps^2} \int_{\mathbb{R}^2} \partial_{\pi\pi} V_T(s,y) \phi\left(\frac{s-t}{\eps}, \frac{y-\pi}{\eps}\right) \, dy \, ds 
        = \int_{\mathbb{R}^2} \partial_{\pi \pi} V_T(t + \eps u, \pi + \eps v) \phi(u, v) \, du \, dv. 
        \end{split}\label{mollified_pi_pi_deriv}
    \end{equation*}
    Here, $\phi_2(\cdot, \cdot)$ denotes the partial derivative of $\phi(\cdot, \cdot)$ with respect to its second argument, and $\phi_{22}(\cdot, \cdot)$ denotes the second partial derivative with respect to the same argument. The second equality follows from integration by parts and the fact that $\phi \equiv 0$ outside its support.
    
    \underline{Step 3:}
    Next, we apply Itô's formula to the mollified 
    $V_T^{(\eps)}(\cdot, \cdot)$.
    To this effect, we fix 
    $(t, \pi) \in S_\delta$,
    and define 
    $\tau_h \coloneqq \inf\{s \ge 0 : \Pi^\pi(s) \not \in [h, 1-h]\} \wedge (T-t-h)$ for $h \in (0,\delta)$.
    Note that 
    $(t + \tau_h, \Pi^\pi(\tau_h)) \in S_{h}$
    and 
    $\lim\limits_{h \downarrow 0} \tau_h = T-t$
    almost surely. 
    Applying It\^{o}'s formula to 
    $V_T^{(\eps)}(t+\cdot \wedge \tau_h, \Pi^\pi(\cdot \wedge \tau_h))$
    yields
    \begin{equation} \label{ito_for_mollified}
            V_T^{(\eps)}(t+s\wedge \tau_h, \Pi^\pi(s \wedge \tau_h)) 
            =
            V_T^{(\eps)}(t, \pi) + \int_0^{s \wedge \tau_h} \mathcal{L}V_T^{(\eps)}(t+u, \Pi^\pi(u)) \, du + \int_0^{s \wedge \tau_h} \partial_\pi V^{(\eps)}_T(t+u, \Pi^\pi(u)) \, d\Pi^\pi(u)
    \end{equation}
    for every $s \in [0, T-t]$, where $\mathcal{L}(\cdot)$ denotes the partial differential operator 
    \begin{equation*}
        \mathcal{L} f(t, \pi) \coloneqq \partial_t f(t, \pi) + \frac{a^2}{2} \pi^2 (1-\pi)^2 \partial_{\pi \pi}f(t, \pi).
    \end{equation*}
    We now need to let $\eps \downarrow 0$ and show that all terms in \eqref{ito_for_mollified} converge accordingly.

    \underline{Step 4:}
    For fixed $0 < h < \delta$, we fix a decreasing sequence $\{\eps_n\}_{n \in \mathbb{N}}$ such that $0 < \eps_n < \frac{h}{2}$ and $\eps_n \downarrow 0$, and note that $(t+\eps_n u, \pi + \eps_n v) \in S_{h/2}$ whenever $(t,\pi) \in S_h$ and $(u,v) \in [0,1]^2$. We consider the expression \eqref{ito_for_mollified} for $\eps_n$ and send $n \to \infty$.
    
    First, note that the bounded convergence theorem, the continuity of the function $V_T(\cdot, \cdot)$, and the definition \eqref{definition_mollified_value_function}, imply that the left-hand side of \eqref{ito_for_mollified} converges to $V_T(t+s\wedge \tau_h, \Pi^\pi(s \wedge \tau_h))$; while the first term on the right-hand side converges to $V_T(t, \pi)$.

    Second, since $\partial_\pi V(\cdot, \cdot)$ is bounded and continuous on $S_{h/2}$, so are $\partial_\pi V^{(\eps_n)}(\cdot, \cdot)$ on $S_{h}$ due to the representation \eqref{partial_pi_representation}; thus the dominated convergence theorem for stochastic integrals implies the existence of a subsequence of $\{\eps_n\}_{n\in\mathbb{N}}$ such that the stochastic integral in \eqref{ito_for_mollified} converges to
    \begin{equation*}
        \int_0^{s \wedge \tau_h} \partial_\pi V_T(t+u, \Pi^\pi(u)) \, d\Pi^\pi(u).
    \end{equation*}
    We will abuse notation and also refer to this subsequence as $\{\eps_n\}_{n \in \mathbb{N}}$.
    
    Finally, in order to obtain the convergence of the remaining term in \eqref{ito_for_mollified}, we consider the sets $\partial \mathcal{C}$ and $S_{h} \setminus \partial \mathcal{C}$ separately. Note that, for any $n \in \mathbb{N}$, the integral of $\mathcal{L} V^{(\eps_n)}$ over the set $\partial \mathcal{C}$ is equal to zero, as a consequence of the following chain of inequalities:
    \begin{equation*}
        \begin{split}
            & \ex\left[\left|\int_0^{s \wedge \tau_h} \mathcal{L}V^{(\eps_n)}(t+u, \Pi^\pi(u)) \cdot \mathbf{1}_{\partial \mathcal{C}}(t+u, \Pi^\pi(u)) \, du \right| \right] 
            \\& \quad \quad \quad \quad \le
            \ex\left[\int_0^{T-t} C \cdot \mathbf{1}_{\partial \mathcal{C}}(t+u, \Pi^\pi(u)) \, du \right]
            =
            C \int_0^{T-t} \int_0^1 f(u,\pi) \cdot \mathbf{1}_{\partial \mathcal{C}}(t+u, \pi) \, d\pi \, du = 0.
        \end{split}
    \end{equation*}
    Here, $C$ is an upper bound on the weak derivatives $\partial_t V_T(\cdot, \cdot), \partial_{\pi \pi} V_T(\cdot, \cdot)$, which are bounded on $S_{h/2}$, and $f(u, \cdot)$ denotes the density of the random variable $\Pi^\pi(u)$. The last equality follows from the fact that the set $\partial \mathcal{C}$ has Lebesgue measure zero by Lemma \ref{boundary_measure_zero}, whose proof is independent of the current one.
    
    As for the complement of $\partial \mathcal{C}$, we recall that the derivatives $\partial_t V_T(\cdot, \cdot)$, $\partial_{\pi \pi} V_T(\cdot, \cdot)$ are continuous and bounded on $S_{h/2} \setminus \partial \mathcal{C}$ (moreover, the weak and strong derivatives coincide in this region), so for all $(t, \pi) \in S_{h} \setminus \partial \mathcal{C}$ we have 
    \begin{equation*} 
        \lim_{n \rightarrow \infty} \left(\mathcal{L} V_T^{(\eps_n)}(t, \pi)\right) = \mathcal{L} V_T(t, \pi) = \partial_t V_T(t, \pi) + \frac{a^2}{2} \pi^2(1-\pi)^2\partial_{\pi \pi} V_T(t, \pi).
    \end{equation*}
    Thus, using the boundedness of the derivatives $\partial_t V_T(\cdot, \cdot)$ and $\partial_{\pi \pi} V_T(\cdot, \cdot)$ on $S_{h/2} \setminus \partial \mathcal{C}$ along with the dominated convergence theorem, we obtain that the Lebesgue integral in \eqref{ito_for_mollified} converges to 
    \begin{equation*}
        \int_0^{s \wedge \tau_h} \mathcal{L}V_T(t+u, \Pi^\pi(u)) \cdot \mathbf{1}_{S_{h} \setminus \partial \mathcal{C}} (t+u, \Pi^\pi(u)) \, du.
    \end{equation*}
    
    \underline{Step 5:}
    Combining the above results, noting that $\mathcal{L}V(t, \pi) \equiv 0$ when $\pi < b(t)$, and $\mathcal{L}V(t, \pi) = c_1'(t)\pi - c_0'(t)(1-\pi)$ for $(t, \pi) \in \text{int}(\mathcal{S}_T)$, and observing that $\{(t, \pi) : \pi = b(t)\} \subseteq \partial \mathcal{C}$, we obtain
    \begin{equation*}
        \begin{split}
            V_T(t+s\wedge &\tau_h, \, \Pi^\pi(s \wedge \tau_h)) 
            = 
            \int_0^{s \wedge \tau_h} \mathbf{1}_{\{\Pi^\pi(u) \ge b(t+u) \}} \Big(c_1'(t+u)\Pi^\pi(u) -c_0'(t+u)(1-\Pi^\pi(u)) \Big) du
            \\
            &+ 
            V_T(t, \pi) + 
            \int_0^{s \wedge \tau_h} \partial_\pi V_T(t+u, \Pi^\pi(u)) \, \Pi^\pi(u) \, (1-\Pi^\pi(u)) \, dB(u), \quad s \in [0, T-t],\,  a.s.
        \end{split}
    \end{equation*}
    Since all integrands in the above expression are bounded, the dominated convergence theorem allows us to let $h \downarrow 0$ to obtain the result of the proposition, \eqref{ito_equation_for_value_function}, for any $s \leq T-t$ and $(t, \pi) \in S_\delta$. Since $\delta$ was arbitrary, this holds for any $(t, \pi) \in (0, T) \times (0,1)$. Again, the boundedness of all integrands allows us to take limits as $t \downarrow 0$ as well, and this concludes the proof for the case $T < \infty$, $c_i(\cdot) \in C^2([0, T])$, $i = 0,1$ and $c_i(T) > 0, \, i = 0, 1$.

    To see that \eqref{ito_equation_for_value_function} holds for any of the remaining cases, we fix $t < T$, $s < T-t$, and any increasing sequence $\{T_n\}_{n \in \mathbb{N}}$ such that $T_n \in (t+s, T)$ and $T_n \uparrow T$. Since $c_i(\cdot) \in C^2([0,T_n])$ and $c_i(T_n) > 0$ for $i = 0, 1$, we have that \eqref{ito_equation_for_value_function} holds for every $T_n, n \in \mathbb{N}$, from the above proof. We now pass to the limit as $n \to \infty$.

    We first handle the stochastic integral term. The almost sure convergence (formally, over a subsequence $T_{n_j}$, which we will refer to as just $T_n$, abusing notation)
    \begin{equation*}
        \int_0^s \partial_\pi V_{T_{n}} (t+u, \Pi^\pi(u)) \, d\Pi^\pi(u) \to \int_0^s \partial_\pi V_{T}(t+u, \Pi^\pi(u)) \, d\Pi^\pi(u)
    \end{equation*}
    follows from the dominated convergence theorem for stochastic integrals, which is applicable due to the bounds
    $0 \leq \partial_\pi V_{T_n}(t+\cdot, \cdot) \leq c_0(0) + c_1(0)$
    and the pointwise convergence of $\partial_\pi V_{T_n}(t+\cdot, \cdot)$ to $\partial_\pi V_T(t+\cdot, \cdot)$. The latter follows from the fact that, for any $(u,p) \in [0,s] \times (0,1)$, any subsequence of $\{\partial_\pi V_{T_n}(t+u, p)\}_{n \in \mathbb{N}}$ has a further subsequence that converges to $\partial_\pi V_T(t+u, p)$ as a consequence of the proof of Proposition \ref{prop_C1_value_func}.

    It suffices now to show that the Lebesgue integral in \eqref{ito_equation_for_value_function} converges almost surely to the appropriate limit, since the same statement for the remaining terms in \eqref{ito_equation_for_value_function} follows immediately from the continuity of the function $V_T(\cdot, \cdot)$. Moreover, it is clear that due to the bounded convergence theorem, it suffices to show the almost sure equality
    \begin{equation}\label{boundary_indicator_limit}
        \lim_{n \rightarrow \infty} \mathbf{1}_{\{\Pi^\pi(u) \ge b_{T_n}(t+u)\}} = \mathbf{1}_{\{\Pi^\pi(u) \ge b_{T}(t+u)\}}, \quad \forall \, u \in [0, s].
    \end{equation}
    To obtain \eqref{boundary_indicator_limit}, we note that $V_{R}(u, \pi) \leq V_S(u, \pi)$ for every $R \leq S$ and all $(u, \pi) \in [0,R]  \times [0,1]$, so the definition \eqref{boundary_definition_2} of the stopping boundary implies $b_{R}(u) \leq b_S(u)$ for all $u \in [0,R]$.
    Thus, $\overline b(t+u) \coloneqq \lim_{n \rightarrow \infty} b_{T_n}(t+u)$, $u \in [0,s]$ is well defined and satisfies
    $\overline b(t+u) \leq b_T(t+u)$ for all $u \in [0, s]$. We claim equality holds. Proceeding by contradiction, suppose that $\overline b(t+u) < b_T(t+u)$ for some $u \in [0,s]$. Then, since $\overline b(t+u) > 0$ because $c_i(t+u) > 0$, $i = 0, 1$, we have
    \begin{equation*}
            V_T\left(t+u, \overline b(t+u)\right) = \lim_{n \rightarrow \infty} V_{T_n}\left(t+u, \overline b(t+u)\right)
            = G\left(t+u, \overline b(t+u)\right)
             < V_T\left(t+u, \overline b(t+u)\right),
    \end{equation*}
    an obvious contradiction. Here, the first equality follows from the fact that $V_{T_n}(\cdot, \cdot)$ converges pointwise to $V_T(\cdot, \cdot)$, the second from the fact that $\overline b(t+u) \ge b_{T_n}(t+u)$ for any $n \in \mathbb{N}$, and the inequality follows from the fact the point $(t+u, \overline b(t+u)) \in [0,T) \times (0,1)$ is in the continuation region if $\overline b(t+u) < b_T(t+u)$. Thus, we obtain \eqref{boundary_indicator_limit} due to the left-continuity of the function $x \mapsto \mathbf{1}_{\{\Pi^\pi(u) \ge x\}}$. As a result, we obtain \eqref{ito_equation_for_value_function} for any $s < T-t$. The dominated convergence theorem implies the result for $s=T-t$, concluding the proof.
    \hfill \qed {\parfillskip0pt\par}

\subsection{Proof of Lemma \ref{prop_uniqueness_free_boundary}}\label{subsec_app_uniqueness_free_boundary}
    Fix any $T \in [0, \infty]$ and let $\big(f(\cdot, \cdot), d(\cdot)\big)$ be a pair that satisfies the conditions of the proposition. We first show that $f(\cdot, \cdot) \equiv V_T(\cdot, \cdot)$, and then obtain $b_T(\cdot) = d(\cdot)$ Lebesgue almost everywhere.
    
    To obtain $f(\cdot, \cdot) \equiv V_T(\cdot, \cdot)$, we first establish that $f(\cdot, \cdot)$ satisfies the Itô's formula similar to \eqref{ito_equation_for_value_function}. We proceed similarly to the proof of Proposition \ref{ito_for_value_function}.
    In particular, we start with mollifying $f(\cdot,\cdot)$ 
    to obtain functions $\{f^{(\eps)}(\cdot, \cdot)\}_{\eps > 0}$ such that $f^{(\eps)}(t,\pi) \xrightarrow{\eps \downarrow 0} f(t, \pi)$
    for every $(t, \pi) \in [0,T] \times [0,1]$, and which satisfy the It\^{o} formula
    \begin{equation} \label{ito_for_mollified_f}
        \begin{split}
            f^{(\eps)}(t + \tau, \Pi^\pi(\tau)) = f^{(\eps)}(t,\pi) &+ \int_0^\tau \mathcal{L} f^{(\eps)}(t+s, \Pi^\pi(s)) \, ds 
            \\&
            + \int_0^\tau \partial_\pi f^{(\eps)}(t+s, \Pi^\pi(s)) \, a\, \Pi^\pi(s)(1-\Pi^\pi(s)) \, dB(s),\ \  a.s.
        \end{split}
    \end{equation}
    for any stopping time $\tau \in \mathcal{T}_{T-t}$ and any $(t, \pi) \in [0,T] \times [0,1]$. Here, $\mathcal{L}$ denotes the PDE operator $\partial_t + \frac{a^2}{2}\pi^2(1-\pi)^2 \partial_{\pi\pi}$. Note that the last term on the right-hand side of \eqref{ito_for_mollified_f} is a martingale, which is bounded in $L^2$ on any finite time interval due to the boundedness of the integrand. Thus, for any bounded stopping time $\tau$, we can take expectations in \eqref{ito_for_mollified_f} to obtain
    \begin{equation} \label{exp_ito_for_mollified_f}
        \ex\big[f^{(\eps)}(t+\tau, \Pi^\pi(\tau)) \big] = f^{(\eps)}(t, \pi) + \ex\left[\int_0^\tau \mathcal{L}f^{(\eps)}(t+s, \Pi^\pi(s)) \, ds \right].
    \end{equation}
    
    Fix $\delta > 0$, $h \in (0, \delta)$, $(t, \pi) \in {S}_\delta \coloneqq [\delta, T-\delta]  \times [\delta, 1-\delta]$, and let $\tau_h \coloneqq \inf\{s \ge 0 : (t+s, \Pi^\pi(s)) \notin S_h\}$. Note that the condition \eqref{uninq_1} implies that the functions $f^{(\eps)}(\cdot, \cdot)$ and $\mathcal{L}f^{(\eps)}(\cdot,\cdot)$ are bounded uniformly for $\eps \in (0,h/2)$, $(t, \pi) \in S_h$. Thus, applying the dominated convergence theorem and passing to the limit as $\eps \downarrow 0$ in \eqref{exp_ito_for_mollified_f}, we obtain
    \begin{equation} \label{exp_ito_for_f}
        \begin{split}
            &\ex\big[f(t+\tau\wedge\tau_h, \Pi^\pi(\tau \wedge \tau_h)) \big] 
            \\&=
            f(t,\pi) + \ex\left[\int_0^{\tau \wedge \tau_h} \bigg(\Pi^\pi(s) \Big(c_1'(t+s) + c_0'(t+s)\Big) - c_0'(t+s) \bigg) \, \mathbf{1}\Big((t+s, \Pi^\pi(s)) \in \mathring{(\mathcal{D}^c)}\Big) \, ds \right] \\
            &= f(t,\pi) + \ex\left[\int_0^{\tau \wedge \tau_h} \bigg(\Pi^\pi(s) \Big(c_1'(t+s) + c_0'(t+s)\Big) - c_0'(t+s) \bigg) \, \mathbf{1}\Big(\Pi^\pi(s) > d(t+s)\Big) \, ds \right],
        \end{split}
    \end{equation}
    where $\mathring{(\mathcal{D}^c)}$ denotes the interior of the complement of $\mathcal{D}$. Here, we used the facts that $\mathcal{D}$ is an open set due to \eqref{uninq_5} and $\partial \mathcal{D}$ has Lebesgue measure 0 by \eqref{uninq_6}, so we only need to account for the limit when $(t+s, \Pi^\pi(s)) \in \mathcal{D} \cup \mathring{(\mathcal{D}^c)}$, regions where $f(\cdot,\cdot)$ is of class $C^{1,2}$. \eqref{uninq_2} and \eqref{uninq_3_1} then imply the first equality in \eqref{exp_ito_for_f}. The second equality is an obvious consequence of the fact that $\mathring{(\mathcal{D}^c)} \subseteq \{(t,\pi): \pi > d(t)\}$ and $\partial \mathcal{D}$ has Lebesgue measure $0$. Passing to the limit $h \downarrow 0$ in \eqref{exp_ito_for_f} shows that this formula holds for all $(t, \pi) \in S_\delta$, and $\tau \wedge \tau_h$ replaced by $\tau$, since $\lim_{h \downarrow 0} \tau_h = T-t$ almost surely. Since $\delta > 0$ was arbitrary, the formula holds for any $(t, \pi) \in (0, T) \times (0,1)$.
    
    Since $f(\cdot,\cdot) \ge 0$ as a consequence of \eqref{uninq_4}, the condition \eqref{uninq_3_1} implies
    \begin{equation*}
        d(t) \ge \frac{c_0(t)}{c_0(t) + c_1(t)}, \quad 0 \le t < T.
    \end{equation*}
    This, in turn, implies that for $\pi \ge d(t)$ we have
    \begin{equation} \label{generatorstrictlynegativeongain}
        \pi\Big(c_1'(t) + c_0'(t)\Big) - c_0'(t) \leq \frac{c_0(t) \; c_1(t)}{c_0(t) + c_1(t)} 
        \left(\frac{c_1'(t)}{c_1(t)} - \frac{c_0'(t)}{c_0(t)}\right) < 0,
    \end{equation}
    due to the assumptions \ref{reg_assm_1} and \ref{reg_assm_5}. Appealing to \eqref{exp_ito_for_f} and \eqref{uninq_4}, we then have
    $f(t,\pi) \ge \ex\big[G(t+\tau, \Pi^\pi(\tau))\big]$ 
    for all bounded stopping times $\tau$ and every $(t, \pi) \in (0,T) \times (0,1)$. This implies $f(t, \pi) \ge V_T(t, \pi)$ for every $(t, \pi) \in (0,T) \times (0,1)$ via a trivial localization argument.
    
    To see the reverse direction, for any fixed $(t, \pi) \in (0,T) \times (0,1)$, define the stopping time
    \begin{equation*}
        \sigma \coloneqq\inf\{s \ge 0: \Pi^\pi(s) \ge d(t+s) \} \wedge (T-t)
    \end{equation*}
    and note that \eqref{uninq_3_1} and \eqref{uninq_3_2} imply $f(t+\sigma, \Pi^\pi(\sigma)) = G(t+\sigma, \Pi^\pi(\sigma))$.
    Therefore, we obtain
    $
    f(t, \pi) = \ex\left[G(t+\sigma, \Pi^\pi(\sigma)\right] \leq V_T(t,\pi)
    $
    by \eqref{exp_ito_for_f} and the definitions  of $\sigma$ and $V_T(\cdot,\cdot)$. Since both functions are continuous, equality of $f(\cdot,\cdot)$ and $V_T(\cdot,\cdot)$ holds on $[0,T] \times [0,1]$ as well.

    We now show $d(t) = b_T(t)$ for Lebesgue almost every $t \in [0, T)$. First, we note that, for every $t \in [0,T)$, we have
    \begin{equation*}
            b_T(t) = \inf\{\pi : V_T(t,\pi) = g(t,\pi)\} 
            = \inf\{\pi : f(t,\pi) = g(t,\pi)\} \leq d(t).
    \end{equation*}
    We show the reverse inequality by contradiction, supposing that the set $\{s \in [0,T): b_T(s) < d(s)\}$ has positive Lebesgue measure. 
    Applying the It\^{o} formula of Proposition \ref{ito_for_value_function} to $V_T(\cdot,\cdot)$, taking expectations in \eqref{ito_equation_for_value_function} as in the arguments above, and canceling terms from \eqref{exp_ito_for_f} by using $f(\cdot, \cdot) \equiv V_T(\cdot, \cdot)$, we obtain 
    \begin{align}\label{expectation_zero_between_boundaries}
        \ex \left[\int_0^T \bigg(\Pi^\pi(s) \Big(c_1'(s) + c_0'(s)\Big) - c_0'(s) \bigg) \, \mathbf{1}\Big(b_T(s) \leq \Pi^\pi(s) \leq d(s)\Big) \, ds \right] = 0.
    \end{align}
    Since \eqref{generatorstrictlynegativeongain} holds for $\pi \ge b_T(t)$ by the same arguments, \eqref{expectation_zero_between_boundaries} implies
    \begin{equation*}
        \mathbf{1}\Big(b_T(s) \leq \Pi^\pi(s) \leq d(s)\Big) = 0 \qquad \text{Leb} \times \pr - a.s.,
    \end{equation*}
    where Leb is the Lebesgue measure on $[0,T)$. This, however, is a contradiction, as
    \begin{equation*}
        \int_0^T \pr\Big(b_T(s) \leq \Pi^\pi(s) \leq d(s)\Big) ds > 0, \quad \forall \; \pi \in (0,1), 
    \end{equation*}
    since the random variable $\Pi^\pi(s)$ has a strictly positive density for all $s \in (0,T)$.
    \hfill \qed {\parfillskip0pt\par}

\subsection{Proof of Lemma \ref{boundary_measure_zero}}\label{subsec_app_measure_zero}

    We fix $T \in [0, \infty)$, let $u_T(t, x)$ be the value function defined in \eqref{value_function_american_options}, and recall \eqref{american_call_value_fn}. 
    Using Propositions \ref{prop_boundary_problem_value_func}, \ref{prop_boundary_monotonicity} and the relationship \eqref{american_call_value_fn}, one verifies that on $[0, T) \times \mathbb{R}$ the function $u_T(\cdot, \cdot)$ satisfies
    \begin{align*}
        \begin{cases} \label{freeboundaryforamericancall}
            \frac{a^2}{2} \partial_{xx} u_{T} + \left(\beta_1(t) - \beta_0(t) - \frac{a^2}{2} \right) \partial_x u_T + \beta_0(t) u_T + \partial_t u_T = 0, & \quad x < \widecheck{b}_T(t), \\
        u_T(t,x) = e^x - 1, & \quad x \ge \widecheck{b}_T(t),
        \end{cases}
    \end{align*}
    with the boundary $\widecheck{b}_T: [0, T) \to \mathbb{R}$ and the function 
    $k : [0, T) \times \mathbb{R} \rightarrow [0,T) \times (0,1)$ defined by
    \begin{equation*}
        \widecheck{b}_T(t) \coloneqq \log \left( \frac{b_T(t) \, c_1(t)}{(1 - b_T(t))\, c_0(t)} \right), \quad k(t, x) \coloneqq \left(t,  \frac{e^x \cdot \frac{c_0(t)}{c_1(t)}}{1+e^x \cdot \frac{c_0(t)}{c_1(t)}} \right).
    \end{equation*}
    The latter is a continuously differentiable bijection with strictly positive Jacobian on its domain. Therefore, since the boundary $\partial \widecheck{\mathcal{C}}_T$ of the region
    $$
    \widecheck{\mathcal{C}}_T \coloneqq \left\{(t,x) \in [0, T) \times \mathbb{R} : x < \widecheck{b}_T(t)\right\}
    $$
    satisfies $(t, \pi) \in \partial \mathcal{C}_T \iff (t, y(t, \pi)) \in \partial \widecheck{\mathcal{C}}_T,$
    with $y(\cdot, \cdot)$ of \eqref{change_of_initial_position}, it suffices to show that the Lebesgue measure of the boundary $\partial \widecheck{\mathcal{C}}_T$ is zero. 
    
    Note that, for any $t \in [0, T]$, the point $(t, b_T(t))$ belongs to the stopping region. 
    Thus, by definition the gain function $G(\cdot, \cdot)$ in \eqref{gain_function} and non-negative of the value function $V_T(\cdot, \cdot)$, we have 
    $b_T(t) c_1(t) - (1-b_T(t)) c_0(t) \ge 0$,
    which implies 
    $\widecheck{b}_T(t) \ge 0$. 
    In particular, we immediately get
    $
    \partial \widecheck{\mathcal{C}}_T \subseteq [0,T] \times [0, \infty).
    $
    Hence, it suffices to show that for every $t_0, x_0, R \in \mathbb{Q}$ such that 
    \begin{equation}\label{parabolic_cylinder}
        Q_R(t_0, x_0) \coloneqq (t_0 - R^2, t_0 + R^2) \times (x_0 - R, x_0 + R) \subseteq (0, T) \times (0, \infty),
    \end{equation}
    the set 
    $\partial \widecheck{\mathcal{C}}_T \cap Q_R(t_0, x_0)$
    has Lebesgue measure zero.
    
    This, however, is an immediate consequence of Theorem 2.6 of \cite{Blanchet}. Indeed, for fixed $t_0, x_0, R \in \mathbb{Q}$ such that \eqref{parabolic_cylinder} holds, we define
    $
    h(t,x) \coloneqq u_T(T-t,x) - (e^x - 1),
    $
    and, by arguing similarly to the proof of Proposition \ref{prop_C1_value_func} and employing Theorem 3.6 of \cite{Lamberton} again, obtain that the function $h(\cdot, \cdot)$ satisfies the variational inequalities
    \begin{equation*}
        \begin{cases}
            \mathcal{L}h(t,x) = -e^x \beta_1(t) + \beta_0(t), & \qquad h(t, x) > 0, \\
            \mathcal{L}h(t,x) \leq -e^x \beta_1(t) + \beta_0(t), & \qquad \text{Lebesgue a.e. on } Q_R(t_0, x_0) \\
            h(t,x) \ge 0,
        \end{cases},
    \end{equation*}
    where $\mathcal{L}(\cdot)$ is the differential operator defined by 
    \begin{equation*} 
        \mathcal{L}h(t,x) \coloneqq
        \frac{a^2}{2} \partial_{xx} h(t,x) + \left(\beta_1(t) - \beta_0(t) - \frac{a^2}{2} \right)\partial_x h(t,x) + \beta_0(t) h(t,x) - \partial_t h(t,x).
    \end{equation*}
    It remains to note that, by the assumptions \ref{reg_assm_2} and \ref{reg_assm_5}, the coefficients of the operator $\mathcal{L}(\cdot)$ and the function 
    $(x, t) \mapsto -e^x\beta_1(t) + \beta_0(t)$
    are differentiable, and that on $Q_R(t_0, x_0)$ we have 
    \begin{equation*}
        -e^x \beta_1(t) + \beta_0(t) 
        \ge
        \beta_0(t) - \beta_1(t) 
        \ge
        \inf_{s \in [t_0 - R^2, t_0 + R^2]} \Big(\beta_0(s) - \beta_1(s)\Big) 
        \eqqcolon
        \delta_0(t_0, R) > 0.
    \end{equation*}
    Therefore, the function $h(\cdot, \cdot)$ solves the problem (2.1) of \cite{Blanchet} and satisfies the conditions of (1.2) of that paper; from which an appeal to Theorem 2.6 of \cite{Blanchet} yields the desired result.
    \hfill \qed {\parfillskip0pt\par}

\end{appendices}
 
\printbibliography

\end{document}